\renewcommand*{\backref}[1]{}
\renewcommand*{\backrefalt}[4]
{%
    \ifcase #1 (Not cited.)%
        \or        (Cited on page~#2.)
        \else      (Cited on pages~#2.)
    \fi
}
\renewcommand{\p@subfigure}{\thefigure.} 
\theoremstyle{theorem}
\newtheorem{theorem}{Theorem}[section]
\newtheorem{proposition}[theorem]{Proposition}
\newtheorem{question}[theorem]{Question}
\newtheorem{corollary}[theorem]{Corollary}
\newtheorem*{rep@theorem}{\rep@title}
\newcommand{\newreptheorem}[2]{%
\newenvironment{rep#1}[1]{%
 \def\rep@title{#2 \ref{##1}}%
 \begin{rep@theorem}}%
 {\end{rep@theorem}}}
\theoremstyle{definition}
\newtheorem{definition}[theorem]{Definition}
\newtheorem{remark}[theorem]{Remark}
\newtheorem{example}[theorem]{Example}
\newcommand{\Z}{\mathbb{Z}}
\newcommand{\C}{\mathbb{C}}
\newcommand{\N}{\mathbb{N}}
\newcommand{\Q}{\mathbb{Q}}
\newcommand{\id}{\text{id}}
\newcommand{\Aa}{\mathcal A}
\newcommand{\Ll}{\mathcal L}
\newcommand{\Rr}{\mathcal R}
\newcommand{\Tt}{\mathcal T}
\newcommand{\lk}{\ell k}
\def\@seccntformat#1{%
  \protect\textup{\protect\@secnumfont
    \ifnum\pdfstrcmp{subsection}{#1}=0 \bfseries\fi
    \csname the#1\endcsname
    \protect\@secnumpunct
  }%
}  
\begin{document}

\rhead{\thepage}
\lhead{\author}
\thispagestyle{empty}


\raggedbottom
\pagenumbering{arabic}
\setcounter{section}{0}


\title{Slice disks modulo local knotting}

\author{Jeffrey Meier}
\address{Department of Mathematics, Western Washington University, Bellingham, WA 98229}
\email{jeffrey.meier@wwu.edu}
\urladdr{http://jeffreymeier.org} 

\author{Allison N.  Miller}
\address{Department of Mathematics \& Statistics, Swarthmore College, Swarthmore, PA 19081}
\email{amille11@swarthmore.edu}
\urladdr{https://sites.google.com/view/anmiller/}

\begin{abstract}
The second author and Powell asked whether there exist knots bounding infinitely many slice disks that remain pairwise nonisotopic, even after local knotting. We answer this question in the affirmative, giving many classes of examples distinguished by the kernels of the inclusion-induced maps on the fundamental group.
Along the way, we give a classification of fibered, homotopy-ribbon disks bounded by generalized square knots up to isotopy modulo local knotting, extending work of the first author and Zupan.

We conclude with a discussion of how invertible concordances and satellite operations can produce new examples of knots bounding many inequivalent slice disks. In particular, we give examples of fibered, hyperbolic knots bounding infinitely many fibered, ribbon disks that are pairwise nonisotopic modulo local knotting. The closed monodromies of these knots are pseudo-Anosov mapping classes that have infinitely many distinct handlebody extensions,  
a curiosity that may be of independent interest.

\end{abstract}

\maketitle

\section{Introduction}
\label{sec:introduction}

This paper concerns the classification of disks in $B^4$ bounded by a given slice knot.
This is a challenging problem, whose nature is highly dependent on the notion of equivalence placed on the disks in question.
For example,  the classification of slice disks bounded by the unknot up to isotopy is precisely the classification of knotted spheres in $S^4$ up to isotopy.
In fact,  it is not hard to show that any slice disk for any knot $K$ can be modified by local knotting to give infinitely many nonisotopic slice disks.
One way to circumvent this problem of local knotting is to work with a relation in which two disks are considered equivalent if they become isotopic after connected summation with 2--knots.
Disks that are related in this way are called \emph{isotopic modulo local knotting},  or \emph{LK-isotopic} for short; see Definition~\ref{def:LK}. 
In this setting, the unknot bounds, essentially by definition, a unique slice disk; see Proposition~\ref{prop:unknot}.
The second author and Powell gave examples of knots with arbitrarily many slice disks that are pairwise not  LK-isotopic rel.~boundary~\cite{MilPow_19} (see also~\cite{JuhZem_disks}), and asked the following. 

\begin{question}
\label{quest:infinitelymanydisks?}
	Is there a knot $K$ with infinitely many slice disks that are pairwise nonisotopic rel.  boundary modulo local knotting? 
\end{question}

In this paper, we answer this question in the affirmative.
Our first result classifies twist-spun disks up to LK-isotopy rel. boundary; see Section~\ref{sec:twist-spun} for details.

\begin{reptheorem}{thm:twist-spins}
	Let $J$ be a nontrivial knot. 
	The  $n$--twist-spun ribbon disks  $\{D_n\}_{n \in \Z}$  for $J \# \overline J$ are pairwise nonisotopic rel. boundary modulo local knotting. 
\end{reptheorem}

Note that the $n$--twist-spun disks for $J \# \overline J$ \emph{are} pairwise isotopic via isotopies moving the boundary.  
The bulk of the paper is devoted to a detailed study of the fibered, homotopy-ribbon disks bounded by generalized square knots introduced by the first author and Zupan~\cite{MeiZup_Qpq, MeiZup_disks}, culminating in the following stronger answer to Question~\ref{quest:infinitelymanydisks?}. 

\begin{theorem}
\label{thm:main_answer}
	Let $p,q>0$ be coprime.
	The generalized square knot $Q_{p,q} = T_{p,q}\#\overline T_{p,q}$ bounds infinitely many fibered, homotopy-ribbon disks that are pairwise nonisotopic modulo local knotting. 
\end{theorem}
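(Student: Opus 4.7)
The plan is to exhibit an infinite family $\{D_n\}_{n\in\N}$ of fibered, homotopy-ribbon disks bounded by $Q_{p,q}$ and distinguish them via the inclusion-induced kernel $N(D) := \ker\bigl(\pi_1(S^3 \setminus Q_{p,q}) \to \pi_1(B^4 \setminus D)\bigr)$, as flagged in the abstract. The starting point is the construction of Meier and Zupan~\cite{MeiZup_Qpq, MeiZup_disks}, which produces fibered homotopy-ribbon disks for $Q_{p,q}$ from algebraic data attached to the fiber handlebody (essentially, a choice of handlebody extension of the open-book monodromy); together with the classification theorem referenced in the abstract, I would promote this to an infinite family by varying the algebraic input in a controlled way.

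The first task is to show that $N(D)$ is a well-defined LK-invariant. If $D' = D \# S$ is obtained by local knotting with a $2$-knot $S \subset S^4$, then $\pi_1(B^4 \setminus D') \cong \pi_1(B^4 \setminus D) *_\Z \pi_1(S^4 \setminus S)$, amalgamated over a meridian of $Q_{p,q}$. Since the inclusion from the knot group factors through $\pi_1(B^4 \setminus D)$, a standard argument using Britton's lemma (or the injectivity of the factors in an amalgamated product) yields $N(D) = N(D')$, so $N$ descends to LK-isotopy classes. To distinguish the $D_n$, I would leverage the fibered structure: each $\pi_1(B^4 \setminus D_n)$ splits as $F_n \rtimes_{\phi_n} \Z$, where $F_n$ is the fiber group, and $\pi_1(S^3 \setminus Q_{p,q})$ itself decomposes as an amalgamated free product of two torus knot groups over a meridian. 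Together these give an explicit, combinatorial description of each map that can be compared across $n$.

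The main obstacle is this last distinguishing step: even when the disks are built from ``visibly different'' combinatorial data, the induced kernels $N(D_n)$ could \emph{a priori} coincide, since different algebraic choices can give rise to the same normal subgroup of the knot group. I would approach this by attaching to each disk a finer invariant derived from $N(D_n)$ --- for instance an Alexander-module-type quotient, or the action of $\pi_1(S^3 \setminus Q_{p,q})$ on $H_1$ of a characteristic cover of $B^4 \setminus D_n$ --- and then verifying that this derived invariant takes infinitely many values as $n$ varies. Assembling such a finer invariant and carrying out the computation showing that it is sensitive to the algebraic parameter labelling the disks is, I expect, where the technical heart of the argument will lie.
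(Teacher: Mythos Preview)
Your outline follows the paper's strategy---use the Meier--Zupan disks $D_{c/d}$ for $Q_{p,q}$ and distinguish them via $N(D)=\ker\bigl(\pi(Q_{p,q})\to\pi(D)\bigr)$---but there is a genuine gap in the claim that ``$N$ descends to LK-isotopy classes.'' Your Britton's-lemma argument shows $N(D)=N(D\#S)$ for any $2$--knot $S$; together with invariance under isotopy \emph{rel boundary}, this makes $N$ an invariant of LK-isotopy \emph{rel boundary} (this is precisely Proposition~\ref{prop:kernelob}). But Definition~\ref{def:LK} allows the ambient isotopy carrying $D\#S$ to $D'\#S'$ to move $\partial B^4$. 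Such an isotopy restricts to a symmetry $\psi\in\textsc{Sym}(S^3,Q_{p,q})$, and one only obtains $N(D')=\psi_*\bigl(N(D)\bigr)$, not equality of subgroups.

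This is not a cosmetic issue: $\textsc{Sym}(S^3,Q_{p,q})$ is infinite (the twist $\tau$ rotating one summand has infinite order and sends $D_{c/d}$ to $D_{(c+2d)/d}$), so infinitely many LK-rel-boundary classes can---and for the twist-spun disks $D_{2n/1}$ do---collapse to a single LK-isotopy class. The paper closes this gap separately in Theorem~\ref{thm:main_I}: one identifies generators $\alpha,\beta,\tau$ of $\textsc{Sym}(S^3,Q_{p,q})$, extends each over $B^4$, and computes the resulting action on the set $\{D_{c/d}\}$; only then does varying the denominator $d$ yield infinitely many genuine LK-isotopy classes. As for the distinguishing step you flag as the main obstacle, the paper does not pass to an Alexander-module quotient (which is a \emph{coarser} shadow of $N$, not a finer one---the paper uses it only for a subfamily in Section~\ref{sec:Alex}) but instead exploits the Seifert-fibered structure on the disk exteriors: each $\ker(i_{c/d})$ is normally generated by an explicit slope-curve on a pillowcase orbifold, and a syllable-length count in $\Z_p*\Z_q$ (Propositions~\ref{prop:generator} and~\ref{prop:nontrivial_class}) shows these normal generators are pairwise inequivalent.
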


We remark that when $\{p,q\}=\{2,2k+1\}$,  then infinitely many of the fibered, homotopy-ribbon disks of Theorem~\ref{thm:main_answer} are known to be ribbon~\cite[Theorem 1.6]{MeiZup_disks}.
Theorem~\ref{thm:main_answer} is an immediate consequence of the following two results,  whose proofs rely on the classification theorems for fibered, homotopy-ribbon disks bounded by generalized square knots given in~\cite{MeiZup_Qpq, MeiZup_disks}.
For fixed $p,q$, these disks are parametrized by $c/d\in\Q$ with $c$ even,  and all have diffeomorphic exteriors.

\begin{reptheorem}{thm:main_IRB}
	Two fibered, homotopy-ribbon disks $D_{c/d}$ and $D_{c'/d'}$ bounded by $Q_{p,q}$ are isotopic rel. boundary modulo local knotting if and only if they are isotopic rel. boundary if and only if $c'/d'=c/d$.
\end{reptheorem}

\begin{reptheorem}{thm:main_I}
	Two fibered, homotopy-ribbon disks $D_{c/d}$ and $D_{c'/d'}$  bounded by $Q_{p,q}$ are isotopic modulo local knotting if and only if they are isotopic if and only if there is some $n \in \Z$ such that $c'/d'=\pm(c+2nd)/d$.
\end{reptheorem}

The proofs of Theorems~\ref{thm:main_IRB} and ~\ref{thm:main_I} are geometric and group-theoretic in nature.  We defer the details to Section~\ref{sec:gsks},  noting only that the key tool is the following.

\begin{proposition}\cite[Proposition 6.1]{MilPow_19}\label{prop:kernelob}
Let $D$ and $D'$ be slice disks for $K$.  
If $$\ker \left( \pi_1(S^3 \setminus K) \to \pi_1(B^4 \setminus D) \right) \neq \ker \left( \pi_1(S^3 \setminus K) \to \pi_1(B^4 \setminus D') \right)$$  then $D$ and $D'$ are nonisotopic rel. boundary modulo local knotting. 
\end{proposition}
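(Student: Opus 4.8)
The plan is to prove the contrapositive by exhibiting the normal subgroup
$$N(D):=\ker\!\big(\pi_1(S^3\setminus K)\longrightarrow\pi_1(B^4\setminus D)\big)\trianglelefteq\pi_1(S^3\setminus K)$$
as an invariant of $D$ up to isotopy rel.\ boundary modulo local knotting. There are two invariance statements to establish: that $N(D)$ is unchanged under an ambient isotopy of $D$ rel.\ boundary, and that it is unchanged under local knotting. The first is immediate: an isotopy of $D$ rel.\ boundary furnishes a homeomorphism of pairs $(B^4,D)\to(B^4,D')$ restricting to the identity on $(S^3,K)$, hence a commuting square of inclusion-induced maps, which forces $N(D)=N(D')$. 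So the real content lies in invariance under local knotting.

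For that, I would fix a $2$--knot $R\subset S^4$ and compute $\pi_1(B^4\setminus(D\# R))$ with the Seifert--van Kampen theorem. The connected sum is formed by removing open tubular neighborhoods of standardly embedded ball--disk pairs from the interiors of $(B^4,D)$ and $(S^4,R)$ and gluing the results along their common boundary $(S^3,\text{unknot})$. The local point to verify is that excising such an interior ball--disk pair changes the fundamental group of neither complement: the portion deleted from the complement deformation retracts onto the gluing locus $S^3\setminus\nu(\text{unknot})\simeq S^1$, so van Kampen identifies $\pi_1$ before and after the excision. Van Kampen then also gives
$$\pi_1\big(B^4\setminus(D\# R)\big)\;\cong\;\pi_1(B^4\setminus D)\ast_{\Z}\pi_1(S^4\setminus R),$$
the amalgamating $\Z$ being generated by a meridian on each side. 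Because the factors of a free product with amalgamation inject, the inclusion induces a \emph{monomorphism} $\pi_1(B^4\setminus D)\hookrightarrow\pi_1(B^4\setminus(D\# R))$; and since the connected sum is performed away from $S^3$, this monomorphism fits into a commuting triangle under $\pi_1(S^3\setminus K)$. Injectivity then yields $N(D\# R)=N(D)$.

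Combining the two invariance statements finishes the argument: if $D$ and $D'$ are isotopic rel.\ boundary modulo local knotting, then by definition $D\# R$ is isotopic rel.\ boundary to $D'\# R'$ for some $2$--knots $R,R'$, so
$$N(D)=N(D\# R)=N(D'\# R')=N(D'),$$
and contrapositively $N(D)\neq N(D')$ forces $D$ and $D'$ to be nonisotopic rel.\ boundary modulo local knotting. The one step that needs genuine care is the van Kampen bookkeeping in the second paragraph --- checking that deleting the interior ball--disk pair leaves $\pi_1$ of the complement untouched, and that the resulting amalgamation is precisely along a meridional $\Z$ --- after which the conclusion rests only on the standard fact that the factors of an amalgamated free product embed.
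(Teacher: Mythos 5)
Your argument is correct and is essentially the proof of the cited result \cite[Proposition 6.1]{MilPow_19}: the paper only quotes that proposition, and the standard proof is exactly the van Kampen bookkeeping you describe, with $N(D)$ invariant under local knotting because $\pi_1(B^4\setminus D)$ injects into the amalgamated product $\pi_1(B^4\setminus D)\ast_{\Z}\pi_1(S^4\setminus R)$. The one point you should make explicit is that the normal form theorem for amalgamated products requires the meridional $\Z$ to inject into \emph{both} factors, which holds here since a meridian generates $H_1\cong\Z$ of the disk exterior and of the $2$--knot exterior.
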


All the examples described thus far have the form $J\#\overline J$,  so one might wonder if there are other knots with infinitely many inequivalent slice disks.
In Section~\ref{sec:examples}, we show that inequivalence up to isotopy modulo local knotting often persists through various geometric constructions.
First, we use invertible concordances to obtain fibered, hyperbolic knots with infinitely many non LK-isotopic disks. 

\begin{repcorollary}{coro:hyp_fib}
	There exist infinitely many fibered, hyperbolic knots each of which bounds infinitely many fibered, ribbon disks that are pairwise nonisotopic modulo local knotting. 
\end{repcorollary}

The closed monodromies of such knots are pseudo-Anosov and have infinitely many handlebody extensions; this improves work of Long, which shows that the closed monodromy of $\mathbf{10_{123}}$ is pseudo-Anosov and has two handlebody extensions~\cite{Long}.

\begin{repcorollary}{coro:pseudoA}
	There exist infinitely many distinct pseudo-Anosov mapping classes each of which has infinitely many distinct handlebody extensions. 
\end{repcorollary}

We then investigate situations where inequivalence up to isotopy modulo local knotting persists under various satellite operations, yielding the following corollaries.

\begin{repcorollary}{coro:non0winding}
	Let $P= P(U)$ be a pattern with nonzero winding number such that $P(U)$ is slice (respectively, ribbon) and let $J$ be  a nontrivial knot.
	Then $P(J \#\overline J)$ has infinitely many slice disks (respectively, ribbon disks) that are pairwise nonisotopic rel. boundary modulo local knotting. 
\end{repcorollary}

\begin{repcorollary}{coro:trefoil_pattern}
	Let $P$ be a pattern with $P(U)= T_{3,2} \#\overline T_{3,2}$.  
	Then for any slice (respectively, ribbon) knot $K$,  the satellite $P(K)$ has infinitely many slice disks (respectively, ribbon disks) that are pairwise nonisotopic rel. boundary modulo local knotting.  
\end{repcorollary}

The proof of Corollary~\ref{coro:trefoil_pattern} relies on the following Alexander module computation. 

\begin{repproposition}{coro:alternate}
	The ribbon disks $\left\{D_{2/(2k+1)}\right\}_{k\geq0}$ bounded by the square knot are pairwise nonisotopic rel. boundary modulo local knotting, as detected by $\ker( \mathcal{A}(Q) \to \mathcal{A}(D_{2/(2k+1)}))$. 
\end{repproposition}

While the Alexander module method is neither new to this paper (see e.g.~\cite{MilPow_19, Miyazaki86}) nor generally as powerful as the full fundamental group approach,  it is needed for Corollary~\ref{coro:trefoil_pattern} and  is generally far more computationally convenient.
We include detailed computations in Section~\ref{sec:Alex}.  

\subsection{Directions for future work}
We conclude the introduction with a  brief survey of what is known regarding classification of slice disks for a given knot,  together with some open questions. 
For a slice knot $K$,  we let 
\begin{align*}
\mathcal{HR}(K) &= \{\text{homotopy ribbon disks for $K$}\} / \text{ isotopy rel. boundary} \\
\mathcal{D}(K) &= \{\text{slice disks for $K$}\} / \text{ isotopy rel. boundary modulo local knotting}.
\end{align*}

In this language,  Proposition~\ref{prop:unknot} states that $|\mathcal{D}(U)|=1$, while it is open whether $|\mathcal{HR}(U)|=1$ or indeed even whether the unknot has a unique ribbon disk.  Some progress in this direction comes from work of Scharlemann showing that any ribbon disk for the unknot that is  to the standard one must have at least three minima~\cite{Sch_85}.
Additionally,  Gabai's solution to the Property R Conjecture combines with seminal work of Casson-Gordon~\cite{CasGor_83} to imply that the unknot bounds a unique fibered, homotopy-ribbon disk up to isotopy rel. boundary~\cite{Gab_87}. 
If one passes from the smooth category to the topologically locally flat category,  a bit more is known.
Freedman's disk embedding theorem implies that any knot with trivial Alexander polynomial bounds a unique $\Z$--homotopy-ribbon disk up to isotopy rel. boundary~\cite{Fre_84, FreQui_90, DET}.
This work was extended to $BS(1,2)$--homotopy-ribbon disks by Conway and Powell~\cite{ConPow_21}; work of Conway outlines some obstructions to pushing this program forward to other groups~\cite{Con_24}.

There is a natural inclusion-induced map $\iota_K$ from $ \mathcal{HR}(K)$ to $ \mathcal{D}(K)$.  
 From this perspective,  Theorem~\ref{thm:twist-spins} establishes that the image of $\iota_K$ is infinite whenever $K=J \#\overline{J}$ for $J$ nontrivial, and Corollary~\ref{coro:hyp_fib} establishes the same result for infinitely many hyperbolic $K$.  
On the other hand, the following remains open.

\begin{question}
\label{quest:finitedisks?}
	Is there a nontrivial slice knot $K$ for which the image of $\iota_K$  is finite? 
\end{question}

We conjecture that the Stevedore knot $\mathbf{6_1}$ and the knots $\#^k \mathbf{9_{46}}$ are examples of knots for which $D(K)$ is finite and hence for which the answer to Question~\ref{quest:finitedisks?} is `Yes'.

It is also open whether the map $\iota_K$ is injective.
Theorem~\ref{thm:main_IRB} shows that, for a generalized square knot $Q$, the map $\iota_Q$ is injective when restricted to the subset of $\mathcal{HR}(Q)$ consisting of \emph{fibered}, homotopy-ribbon disks.
Rephrasing,  we ask the following. 

\begin{question}
\label{quest:nonisotoiso}
	Are there nonisotopic homotopy-ribbon disks that are isotopic modulo local knotting?
\end{question}
 
We note that the question of whether $\iota_K$ is surjective is a slice-ribbon-type conjecture in the sense that surjectivity would imply that every slice disk is LK-isotopic to a homotopy-ribbon disk.

As usual, one might wonder about the interaction between topological and smooth phenomena.
Akbulut provided examples of exotic smooth slice disks that are topologically isotopic rel. boundary but not smoothly isotopic rel. boundary~\cite{Akb_91}.
Are these disks smoothly isotopic rel. boundary modulo local knotting? 
(See also \cite{GuthEtc} for examples of disks that are topologically isotopic rel. boundary but not smoothly isotopic.) 
Working within the topological category,  one might also ask whether the obstructions of Proposition~\ref{prop:kernelob} are complete.

\begin{question}
Does there exist a topologically slice knot $K$ and slice disks $D$ and $D'$ for $K$ that have 
\[\ker \left( \pi_1(S^3 \setminus K) \to \pi_1(B^4 \setminus D) \right) = \ker \left( \pi_1(S^3 \setminus K) \to \pi_1(B^4 \setminus D') \right)
\] 
and yet which are not topologically  isotopic rel. boundary modulo local knotting?
\end{question}

This question is resolved under certain additional assumptions by \cite[Corollary 1.8]{Con_24}.

Finally, let $d$ be the generalized stabilization distance of~\cite{MilPow_19},  where disks $D$ and $D'$ have distance no more than $k$ if and only if $D$ and $D'$ are LK-isotopic rel. boundary after $k$ 1--handle additions to each.  
The fibered, homotopy-ribbon disks of Theorems~\ref{thm:main_IRB} and \ref{thm:main_I} are obtained by surgering a fixed minimal-genus Seifert surface for $Q_{p,q}$,  and hence have pairwise generalized stabilization distance at most $g(Q_{p,q}) = (p-1)(q-1)$.
The following therefore remains open. 

\begin{question}
	Does there exist a knot $K$ with slice disks $\{D_n\}_{n\in\N}$ such that $\{d(D_n, D_m)\}_{n,m\in\N}$ is unbounded?
\end{question}

We note that it is unclear what lower bounds could apply to answer this question in the affirmative.  The methods of~\cite{MilPow_19} rely upon the generating rank of certain submodules of the Alexander module and twisted Alexander modules,  but these are always bounded above by the (finite) generating rank of the Alexander module of the knot itself.

\subsection*{Acknowledgements}
 
The first author is grateful to Isaac Sundberg for bringing the work of the second author and Powell to his attention and to Alex Zupan for discussions that led to some of the ideas in Section~\ref{sec:gsks}, and he would like to thank the Max Planck Institute for Mathematics for providing an ideal environment for collaboration in the fall of 2022, during which time the aforementioned discussions took place.

The first author was supported by the NSF grant DMS-2405324.  The second author was supported by an AMS-Simons Research Enhancement Grant for PUI Faculty. 

\section{Definitions}
\label{sec:notions}

In this section, we establish notational conventions, give formal definitions, and explore basic properties.

\subsection{Notation}
\label{subsec:notation}

Throughout, our slice disks are smooth, but our obstructions to being isotopic modulo local knotting hold in the topological category.
We let $\nu(N)$ denote an open tubular neighborhood of a submanifold $N\subset M$.
Submanifolds will always be \emph{neatly embedded}: if $N\subset M$, then $\partial N\subset\partial M$ and $N\pitchfork\partial M$.
For a codimension-two submanifold $N\subset M$, we let $\pi(N)$ denote $\pi_1(M\setminus\nu(N))$.
In this case, an element of $\pi(N)$ is called \emph{meridional} if it is freely homotopic in $M\setminus\nu(N)$ to a curve that bounds an embedded disk in $M$ that intersect $N$ in a single point.

Given a knot $J$, we let $\overline J$ denote the mirror reverse of $J$.
More generally, $\overline{(M,N)}$ denotes $(M,N)$ with both orientations reversed, so $\overline{(S^3,J)} = \left(S^3,\overline J\right)$.

\subsection{Definitions}
\label{subsec:definitions}

Let $K$ be a knot in $S^3$.
A neatly embedded disk $D\subset B^4$ with $\partial D = K$ is a called a \emph{slice disk for $K$}.
If the inclusion-induced map $\pi(K)\to\pi(D)$ is a surjection, then $D$ is called \emph{homotopy-ribbon}.
If $D$ can be isotoped to have no local maxima with respect to the standard Morse function on $B^4$, then $D$ is called \emph{ribbon}.
The knot $K$ is called \emph{slice}, \emph{homotopy-ribbon}, or \emph{ribbon}, respectively.
The following sequence of set-inclusions is immediate:
$$\{\text{ribbon knots}\}\subseteq\{\text{homotopy-ribbon knots}\}\subseteq\{\text{slice knots}\}.$$
However, it is an open question (which filters the Slice Ribbon Conjecture~\cite{Fox_62}) whether either of the above set inclusions is strict.
Many of the slice disks encountered in this paper are fibered, homotopy-ribbon disks, which have a rich history of study~\cite{CasGor_83, LarMei_15, MeiZup_Qpq, Mil_21, Miy_94}. 
In fact, the proof of our main result, Theorem~\ref{thm:main_IRB}, depends implicitly on the property of fiberedness.

Two slice disks $D$ and $D'$ for $K$ are \emph{isotopic} (respectively, \emph{isotopic rel. boundary}) if there is an ambient isotopy $\Phi_t$ of $B^4$ with $\Phi_1(D)=D'$ (respectively, with $\Phi_1(D) = D'$ and $\Phi_t\vert_{S^3} = \id_{S^3}$).
A \emph{2--knot} is a smoothly embedded 2--sphere in $S^4$.
We now introduce the main definition.

\begin{definition}
\label{def:LK}
	Two slice disks $D$ and $D'$ for $K$ are \emph{isotopic modulo local knotting}, or \emph{LK-isotopic}, (respectively, \emph{isotopic rel. boundary modulo local knotting}, or \emph{LK-isotopic rel. boundary}) if there exist 2--knots $S$ and $S'$ such that $D\#S$ and $D\#S'$ are isotopic (respectively, isotopic rel. boundary).   
\end{definition}

The following well-known result is a quick consequence of these definitions. 
\begin{proposition}
\label{prop:unknot}
	Any two slice disks for the unknot are LK-isotopic rel. boundary.
\end{proposition}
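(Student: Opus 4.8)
The plan is to show that any slice disk $D$ for the unknot $U$ is LK-isotopic rel. boundary to the standard (trivial) slice disk $D_0$, the one obtained by pushing a Seifert disk for $U$ into $B^4$; since LK-isotopy rel. boundary is an equivalence relation, this suffices. First I would recall the classical fact that a slice disk for the unknot, together with a chosen core, can be ``closed up'': capping $D \subset B^4$ off with the standard disk bounded by $U$ in the other $B^4$ of $S^4 = B^4 \cup_{S^3} B^4$ produces a 2--knot $S_D$ in $S^4$, and conversely removing a small trivially embedded disk neighborhood of a point on the standard capping disk exhibits $D$ as the trace in $B^4$ of $S_D$. Concretely, $D$ is isotopic rel. boundary to $D_0 \# S_D$ for a suitable 2--knot $S_D$. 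This is the content that makes the proposition ``essentially by definition'': the local-knotting move is precisely the freedom of connect-summing with 2--knots.

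The key steps in order would be: (1) Fix the standard slice disk $D_0$ for $U$ and identify $S^4$ with the double of $B^4$ along $S^3$, with a standard unknotted 2--sphere $\Sigma_0$ meeting the first $B^4$ in $D_0$ and the second in a standard disk $\Delta$. (2) Given an arbitrary slice disk $D$ for $U$, glue $D$ to $\Delta$ along $U = \partial D = \partial \Delta$ to obtain a smooth 2--knot $S := D \cup_U \Delta \subset S^4$. (3) Observe that connect-summing $S$ onto $\Sigma_0$ and then ``cutting along $S^3$'' recovers $D$: more precisely, $D$ is isotopic rel. boundary to $D_0 \# S$, because performing the connected sum of $\Sigma_0$ with $S$ inside the second $B^4$ (away from $S^3$) alters only the $\Delta$ side and replaces the standard $D_0$ with $D_0$ on the first side; whereas performing it on the first side replaces $D_0$ by $D_0 \# S$. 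The two resulting 2--spheres in $S^4$ are isotopic (connected sum is commutative/associative and the summand placement is immaterial for a 2--knot in $S^4$), and restricting this isotopy suitably — or rather, re-deriving the $B^4$ picture from the $S^4$ picture — yields that $D \# S'$ is isotopic rel. boundary to $D_0 \# S''$ for appropriate 2--knots $S', S''$, hence $D$ and $D_0$ are LK-isotopic rel. boundary. (4) Conclude by transitivity that any two slice disks for $U$ are LK-isotopic rel. boundary.

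The main obstacle — really the only subtle point — is making step (3) precise: carefully setting up the correspondence between slice disks for $U$ in $B^4$ and 2--knots in $S^4$, and checking that the connect-sum operation on the $S^4$ side corresponds exactly to the local-knotting operation on the $B^4$ side, all while controlling the boundary behavior so that the isotopies are rel. boundary. One must be careful that the disk $\Delta$ used to cap off is genuinely standard and that the identification $S^4 = B^4 \cup_{S^3} B^4$ together with an isotopy rearranging the location of a connect-sum summand restricts to an honest ambient isotopy of $B^4$ fixing $S^3$ pointwise. I would handle this by working with a collar of $S^3$ in $B^4$ and keeping all connect-sum necks and the cap $\Delta$ in the complementary $B^4$, so that the ``swap'' isotopy on $S^4$ can be arranged to be supported away from the collar and hence to restrict to the identity on $S^3$. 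Everything else is formal.
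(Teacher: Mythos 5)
Your proof is correct and takes essentially the same approach as the paper's: both cap a slice disk off with the standard disk to produce a 2--knot and then invoke the correspondence (via puncturing, equivalently connected sum with the trivial disk) between 2--knots in $S^4$ and slice disks for the unknot in $B^4$. The only cosmetic difference is that you normalize each disk to the standard one $D_0$ and finish by transitivity, whereas the paper compares two arbitrary disks $D$ and $D'$ directly by noting that $D\#S'$ and $D'\#S$ are both punctured copies of $S\#S'$.
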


\begin{proof}
	Let $D$ and $D'$ be slice disks for the unknot $U$.
	Let $F$ denote the slice disk for $U$ that can be isotoped rel. boundary to lie in $S^3$.
	(There is a unique such slice disk up to isotopy rel. boundary~\cite{Liv_82}.)
	Consider the 2--knots $(S^4, S) = \overline{(B^4,F)}\cup_{(S^3,U)}(B^4,D)$ and $(S^4, S') = \overline{(B^4,F)}\cup_{(S^3,U)}(B^4,D')$.
	Then $D\#S'$ and $D'\#S$ are both punctured copies of  $S\#S'$.
	Since the map from 2--knots to slice disks for the unknot given by puncturing is a bijection, $D\#S$ and $D'\#S'$ are isotopic rel. boundary.  So $D$ and $D'$ are LK-isotopic rel. boundary.
\end{proof}

\section{Twist-spun disks}
\label{sec:twist-spun}

Let $J$ be a nontrivial knot.
Let $(B^3,J^\circ) = (S^3,J)^\circ$ denote the result of puncturing $(S^3,J)$ by removing a small, open ball-neighborhood of a point on $J$, so $J^\circ$ is a knotted arc in $B^3$.
Let $(B^4,D_0) = (B^3,J^\circ)\times [0,2\pi] $, and note that $\partial (B^4, D_0) = (S^3,J\#\overline J)$.
We call $D_0$ the \emph{product ribbon disk} for $J\#\overline J$.

Let $A$ be a vertical axis for $B^3$ intersecting $\partial B^3$ in $\partial J^\circ$, and for $t \in [0,1]$ let $\tau_t\colon B^3\to B^3$ denote rotation through $2\pi t$ radians about $A$. 
Define $\Tt\colon B^3\times I\to B^3\times I$ by $\Tt(x,t) = (\tau_t(x),t)$.

\begin{definition} \label{defn:twistspundisk}
	For $n\in\Z$, let $(B^4, D_n) = \Tt^n(B^4, D_0)$.
	Informally,    $D_n$ is traced out by rotating $J^\circ$ a total of $n$ times as we flow $(B^3,J^\circ)$ through a interval of time.
	We call $D_n$ the \emph{$n$--twist spun disk} for $J\#\overline J$.
\end{definition}

The $D_n$ represent an infinite family of isotopic ribbon disks for $K = J\#\overline J$.
Let
$$\iota_n\colon \pi(K)\to\pi(D_n)$$
be the (surjective) homomorphism induced by inclusion.
We are now ready to prove our first result.

\begin{theorem}\label{thm:twist-spins}
	Let $J$ be a nontrivial knot. 
	The  $n$--twist-spun ribbon disks  $\{D_n\}_{n \in \Z}$  for $J \# \overline J$ are pairwise nonisotopic  rel. boundary modulo local knotting.
\end{theorem}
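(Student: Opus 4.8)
The plan is to apply Proposition~\ref{prop:kernelob}: it suffices to show that the kernels $N_n := \ker(\iota_n\colon \pi(K)\to\pi(D_n))$ are pairwise distinct as subgroups of $\pi(K) = \pi_1(S^3\setminus\nu(J\#\overline J))$. Since all the $D_n$ are isotopic (not rel.~boundary), the groups $\pi(D_n)$ are abstractly isomorphic, and in fact the twisting map $\Tt^n$ identifies them; the point is that the \emph{inclusion-induced} maps differ because $\Tt^n$ does not restrict to the identity on $S^3$. First I would set up coordinates: write $\pi(K)$ in terms of the product structure. Since $D_0 = J^\circ\times[0,2\pi]$, its exterior $B^4\setminus\nu(D_0)$ deformation retracts to $(B^3\setminus\nu(J^\circ))\times\{pt\}$, so $\pi(D_0)\cong \pi_1(B^3\setminus\nu(J^\circ)) = \pi_1(S^3\setminus\nu(J))$, a knot group that we'll call $G_J$. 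Meanwhile $\pi(K) = \pi_1(S^3\setminus\nu(J\#\overline J))$ is, by van Kampen, the amalgamated product $G_J *_{\langle\mu\rangle} \overline{G_J}$ of two copies of the knot group of $J$ over a meridian $\mu$ (with $\overline{G_J}\cong G_J$ since mirroring/reversing doesn't change the group). The map $\iota_0$ sends both factors to $G_J = \pi(D_0)$ via the identity on the first factor and via the canonical identification $\overline{G_J}\cong G_J$ on the second; so $N_0$ is the normal closure of $\{g\bar g^{-1} : g\in G_J\}$, where $\bar{(\,\cdot\,)}$ denotes the identification. More useful: a meridian $\mu$ of $K$ generates $\pi(D_0)$ normally, and $N_0$ is generated by the Wirtinger-type relations identifying the two halves.

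Next I would understand the effect of $\Tt^n$ on the inclusion. The key geometric observation is that $\Tt^n$ is the identity outside a collar of $A\times I$, and its restriction to $\partial(B^3\times I) = S^3$ is isotopic to the identity but realizes, on $\pi(K)$, conjugation or a Dehn-twist-type automorphism supported near the axis. Concretely: the axis $A$ meets $J^\circ$'s complement in a way that, after the twist $\Tt^n$, the inclusion $\iota_n$ can be written as $\iota_n = \iota_0 \circ \phi^n$, where $\phi$ is the automorphism of $\pi(K)$ induced by the "full twist about the axis" — this is precisely the meridional automorphism that fixes the first $G_J$ factor pointwise and acts on the second $\overline{G_J}$ factor by conjugation by $\mu$ (or a power thereof), since rotating once about $A$ drags a meridian around. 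Therefore $N_n = \phi^{-n}(N_0)$. Unwinding, $N_n$ is the normal closure of $\{g\cdot \mu^n \bar g^{-1}\mu^{-n}\}$ — the two halves of $K$ are now "clutched together with $n$ meridional twists." So the claim reduces to: the subgroups $N_n$ of $\pi(K)$, for distinct $n$, are genuinely distinct.

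To distinguish them I would pass to a computable quotient. The cleanest choice: look at $\pi(K)/N_n$, which is $\pi(D_n)\cong G_J$ equipped with its natural epimorphism from $\pi(K)$; distinguishing the $N_n$ is the same as showing no automorphism of the pair $(\pi(K), \{\text{meridians}\})$ carries $N_n$ to $N_m$ — but for the weaker Proposition~\ref{prop:kernelob} statement we only need $N_n\neq N_m$ as literal subgroups, which is easier. Here is where I would exploit that $J$ is nontrivial: its knot group $G_J$ is not $\Z$, so it has a nonabelian nilpotent or, better, a nontrivial metabelian quotient, or one can map $G_J$ onto a finite group $F$ nontrivially. Choose an epimorphism $\rho\colon G_J \twoheadrightarrow F$ to a finite group with $\rho$ nontrivial on $[G_J,G_J]$, arranged so that $\rho(\mu)$ has large order $r$. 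Build the composite $\pi(K) = G_J *_\mu \overline{G_J} \to F\times F$ (pushing out $\rho$ on each factor over the common meridian mapping to $(\rho(\mu),\rho(\mu))$). Under this map, $N_n$ maps onto the normal closure of $\{(x, \rho(\mu)^n y \rho(\mu)^{-n}) : (x,y)\text{ in the diagonal knot-group image}\}$, and comparing these finite images for $n\bmod r$ should already separate infinitely many $n$; iterating with a family of finite quotients with meridian-orders $\to\infty$ (these exist whenever $G_J$ is not $\Z$, e.g. via $p$-congruence quotients or via residual finiteness of knot groups together with the fact that $\mu$ has infinite order) lets me separate all of $\Z$. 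The main obstacle I anticipate is precisely this last point: pinning down the automorphism $\phi$ honestly — i.e.~verifying that the $n$-twist-spin really does act on $\pi(K)$ as "$n$ meridional twists glued between the halves," with the meridian (not some other element) as the twisting element, and that no unforeseen symmetry of $\pi(K)$ identifies $N_n$ with $N_{-n}$ or $N_{m}$ for $m\ne n$. Getting the twisting element exactly right, and then choosing finite quotients in which its order is controlled, is the crux; once that is in hand, residual finiteness of knot groups (Thurston) does the separation and Proposition~\ref{prop:kernelob} finishes the argument.
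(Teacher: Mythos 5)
Your opening moves are sound and match the paper's strategy in spirit: you correctly reduce, via Proposition~\ref{prop:kernelob}, to showing the kernels $N_n=\ker(\iota_n)$ are literally distinct subgroups of $\pi(K)$, and your structural picture --- $\pi(K)\cong G_J*_{\langle\mu\rangle}\overline{G_J}$, $\pi(D_0)\cong\pi(J)$, and $\iota_n=\iota_0\circ\phi^n$ with $\phi$ induced by the rotation of one summand (conjugation by a meridian power on one free factor) --- is essentially the paper's set-up as well. The gap is in the separation step, which is the actual content of the theorem. First, residual finiteness of knot groups does not do what you want: it separates \emph{elements}, not normal subgroups. Two distinct normal subgroups of a residually finite group can have identical images in every finite quotient (they need only share the same closure in the profinite topology), so "iterating finite quotients with meridian-orders $\to\infty$" is not a valid inference from $N_n\neq N_m$, let alone a proof of it. Second, the concrete quotient you propose is not pinned down: a map $G_J*_{\mu}\overline{G_J}\to F\times F$ sending each factor to its own coordinate does not respect the amalgamation unless you specify what each factor does in the \emph{other} coordinate (the meridian must map consistently), and once that is fixed you give no computation showing the images of $N_n$ and $N_m$ actually differ --- "should already separate infinitely many $n$" is the whole theorem restated as a hope.

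The paper closes this gap by a global topological argument rather than quotient-chasing. Glue $(B^4,D_n)\cup_{(S^3,K)}\overline{(B^4,D_m)}$; after untwisting by $\tau^{-m}$ this is the $(n-m)$--twist spin $S_{n-m}(J)$ of Zeeman. Seifert--Van Kampen gives $\pi(S_{n-m})\cong\pi(K)/\langle N_n,N_m\rangle$, so if $N_n=N_m$ then $\pi(S_{n-m})\cong\pi(D_n)\cong\pi(J)$ with meridians corresponding to meridians. But $\mu^{n-m}$ is central in the group of an $(n-m)$--twist spin, whereas a nontrivial knot group has trivial center unless it is a torus knot group, in which case the center is generated by the Seifert fiber class (abelianizing to $pq$) and contains no nontrivial meridional element; hence $n=m$. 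This centrality of $\mu^{n-m}$ is exactly the input your sketch is missing. If you want to salvage your approach, you would need either to import that fact (at which point the finite quotients become unnecessary) or to exhibit, for each $k\neq 0$, an explicit element of $N_0\setminus\phi^k(N_0)$ together with a quotient detecting it --- neither of which your proposal does.
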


\begin{proof}
	Fix $n,m\in\Z$. By Proposition~\ref{prop:kernelob},  it suffices to show that  $\ker(\iota_m) \neq \ker(\iota_n)$ whenever $m \neq n$. 
	Consider the 2--knot $(S^4, S)$ defined as
	$$(S^4,S) = (B^4,D_n)\cup_{(S^3,K)}\overline{(B^4,D_m)}.$$
	
	The isotopy $\tau_t$ described above gives rise to a diffeomorphism $\tau\colon (S^3,K)\to (S^3,K)$ given by rotating one summand of $K = J\#\overline J$ and fixing the other; see Section~3 of~\cite{MeiZup_Qpq} for explicit details.
Since $\tau$ is isotopic to the identity, we can apply $\tau^{-m}$ to $(S^3,K)\subset (S^4,S)$, tapering it to the identity in a collar neighborhood of $(S^3,K)$.
	The effect is that $(S^4,S)$ is diffeomorphic to
	$$(S^4, S_{n-m}(J)) = (B^4, D_{n-m})\cup_{(S^3,K)}\overline{(B^4,D_0)},$$
	which is the $(n-m)$--twist spin of $(S^3,J)$ defined by Zeeman~\cite{Zee_65}.
	
	By Seifert-Van Kampen, we have that
	$$\pi(S)= \pi(D_m) * \pi(D_n)/ \langle \iota_n(g)\iota_m(g^{-1})	\rangle_{g \in \pi(K)}.$$
	Since $\pi(D_m)= \pi(K)/ \ker(\iota_m)$ and $\pi(D_n)= \pi(K)/ \ker(\iota_n)$, we have 
	$$\pi(S)= \pi(K)/ \langle \ker(\iota_m),  \ker(\iota_n) \rangle = \pi(D_m)/ \ker(\iota_n) = \pi(D_n)/ \ker(\iota_m).$$
	
	If we suppose $\ker(\iota_m) = \ker(\iota_n)$, then we find that 
	\begin{align} \label{eqn:pi1isos} \pi(S_{n-m}) \cong \pi(S) \cong \pi(D_n) \cong\pi(J).
	\end{align}

Let $\mu \in \pi(J)$ be a meridional element.
Since the isomorphisms of Equation~\ref{eqn:pi1isos} are induced by inclusion, they send meridional elements to meridional elements.
Therefore $\mu$ is meridional in $\pi(S_{n-m})$,  and hence by~\cite[Section~2.1.2]{CarKamSai},  we have that $\mu^{n-m}$ is central in $\pi(S_{n-m}) \cong \pi(J)$. 

	However, since $\pi(J)$ is the group of a nontrivial knot, its center is trivial unless $J$ is a torus knot $T_{p,q}$~\cite[Chapter~6]{Kaw_96}. 
	Moreover, the center of $\pi(T_{p,q})$ is generated by an element abelianizing to $pq\in\Z$~\cite[Chapter~6]{Kaw_96}.
	But no nontrivial elements of the center can be meridional.
	It follows that $n-m=0$, as desired.
\end{proof}

\section{Disks for generalized square knots}
\label{sec:gsks}

Fix coprime integers $1 < q < p$, and let $T_{p,q}$ denote the $(p,q)$--torus knot.
We refer to the knot $Q = Q_{p,q} = T_{p,q}\#\overline T_{p,q}$ as a \emph{generalized square knot}.
In~\cite{MeiZup_Qpq}, the first author and Zupan introduced an infinite family of fibered, homotopy-ribbon disks bounded by $Q$,  indexed by the rational numbers with even numerator:
$$\Rr = \{D_{c/d} \mid c/d\in\Q,\text{ $c$ even}\}.$$
It was shown that every fibered, homotopy-ribbon disk for $Q$ is in $\Rr$; the members of $\Rr$ are pairwise nonisotopic rel. boundary; and the members of $\Rr$ have diffeomorphic exteriors~\cite[Theorem~1.6]{MeiZup_Qpq}.
Note that for $n\in\Z$, the disk $D_{2n/1}$ is the $n$--twist spun ribbon disk for $Q$, which was introduced in more generality in Section~\ref{sec:twist-spun}.
In~\cite{MeiZup_disks}, it was shown that $D_{c/d}$ and $D_{c'/d'}$ are isotopic (without restriction on the boundary) if and only if there is $n \in \Z$ such that
$$\frac{c'}{d'} = \pm \frac{c+2nd}{d}.$$

The goal of this section is to give the proof of the results of this paper that classify the disks of $\Rr$ up to LK-isotopy rel. boundary and LK-isotopy, respectively.

\begin{theorem}
\label{thm:main_IRB}
	Two fibered, homotopy-ribbon disks $D_{c/d}$ and $D_{c'/d'}$ bounded by $Q_{p,q}$ are isotopic rel. boundary modulo local knotting if and only if they are isotopic rel. boundary if and only if $c'/d'=c/d$.
\end{theorem}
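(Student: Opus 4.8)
The plan is to deduce Theorem~\ref{thm:main_IRB} from Proposition~\ref{prop:kernelob} by computing the kernels $\ker(\iota_{c/d})$ where $\iota_{c/d}\colon \pi(Q)\to\pi(D_{c/d})$ is the inclusion-induced surjection, and showing that distinct values of $c/d\in\Q$ (with $c$ even) give distinct kernels. Since we already know from~\cite{MeiZup_Qpq} that the $D_{c/d}$ are pairwise nonisotopic rel. boundary, and LK-isotopy rel. boundary is a priori coarser, the only implication requiring proof is that LK-isotopy rel. boundary already forces $c'/d' = c/d$; by Proposition~\ref{prop:kernelob} it suffices to show $\ker(\iota_{c/d})\neq\ker(\iota_{c'/d'})$ whenever $c/d\neq c'/d'$. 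So the entire content is a group-theoretic computation inside $\pi(Q) = \pi_1(S^3\setminus Q_{p,q})$.

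First I would recall the structure of $\pi(Q)$. Writing $G = \pi_1(S^3\setminus T_{p,q}) = \langle x,y \mid x^p = y^q\rangle$ with center generated by $z = x^p = y^q$, the group $\pi(Q)$ is the amalgamated product $G *_{\mathbb{Z}} \overline{G}$ over the peripheral subgroup, where the two meridians are identified (this is the standard van Kampen picture for a connected sum, after removing the summing sphere). I would then import from~\cite{MeiZup_Qpq} the explicit description of $\pi(D_{c/d})$ and of the map $\iota_{c/d}$: the fibered homotopy-ribbon disk $D_{c/d}$ has exterior fibering over $S^1$ with fiber a genus-$g(Q)$ handlebody, and the monodromy/handlebody extension is governed by the parameter $c/d$. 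Concretely, $\pi(D_{c/d})$ is obtained from $\pi(Q)$ by killing a family of elements determined by $c/d$ — in the generalized-square-knot picture these are built from the central elements $z, \overline z$ and a "twisting" relation reading off $c/d$, roughly of the form that the normal closure of $\{z^d \overline z^{-d},\ (\text{meridian})^c z^{?}\}$ (or its analogue) is exactly $\ker(\iota_{c/d})$. The precise form is what~\cite{MeiZup_Qpq} supplies; I would quote it rather than re-derive it.

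The key step is then to show these normal closures are distinct for distinct $c/d$. Here I expect the argument to run through the abelianization of $\ker(\iota_{c/d})$ as a module, or through a carefully chosen quotient of $\pi(Q)$ in which the images of the two candidate kernels can be separated — e.g. mapping $\pi(Q)$ to a metabelian or nilpotent quotient where $z$ and $\overline z$ survive, and observing that the slope $c/d$ is recoverable from which powers of $z$, $\overline z$, and the meridian lie in the image of the kernel. Because $z$ is central in $G$ and of infinite order, and the meridian has infinite order, ratios of exponents are genuine invariants and cannot collapse; this is the mechanism forcing $c/d = c'/d'$. An alternate, perhaps cleaner route: use that $\ker(\iota_{c/d})$ is normally generated by its intersection with the peripheral $\mathbb{Z}^2 = \pi_1(\partial\nu(Q))$ — or with the relevant rank-two free abelian subgroup generated by $z$ and $\overline z$ — and that this intersection is literally the rank-one subgroup of "slope $c/d$", which obviously determines $c/d$.

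The main obstacle will be pinning down exactly which subgroup of $\pi(Q)$ the kernel $\ker(\iota_{c/d})$ is (the bookkeeping translating the $c/d$ parametrization of~\cite{MeiZup_Qpq} into explicit normal generators) and then proving that two such normal closures, which may have very different-looking generating sets, are genuinely distinct as subgroups — normal closures are slippery, so I would want an honest invariant (a quotient, or an abelian-module computation) rather than a comparison of generators. A secondary subtlety is making sure the computation is insensitive to the choice of 2--knots $S, S'$ in the definition of LK-isotopy, but that is precisely what Proposition~\ref{prop:kernelob} already handles, so once the kernels are shown distinct, the theorem follows immediately. I would structure the write-up as: (1) recall $\pi(Q)$ and $\iota_{c/d}$ from~\cite{MeiZup_Qpq}; (2) identify $\ker(\iota_{c/d})$ explicitly; (3) exhibit an invariant of this kernel recovering $c/d$; (4) invoke Proposition~\ref{prop:kernelob} and the known rel.-boundary classification to conclude.
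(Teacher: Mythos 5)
Your reduction is the right one and matches the paper's: by Proposition~\ref{prop:kernelob} and the rel.-boundary classification of~\cite{MeiZup_Qpq}, everything comes down to showing $\ker(\iota_{c/d})\neq\ker(\iota_{c'/d'})$ for $c/d\neq c'/d'$. But the heart of the argument --- actually distinguishing these kernels --- is left as a menu of unverified options, and none of them is the mechanism that makes the proof work. The abelianization/metabelian route is essentially the Alexander module obstruction, which the paper shows is strictly weaker: it only separates the subfamily $D_{2/(2k+1)}$ for the square knot (Proposition~\ref{coro:alternate}), not the full two-parameter family for general $Q_{p,q}$. The peripheral-intersection idea also cannot work as stated: for every slice disk the kernel meets $\pi_1(\partial\nu(Q))\cong\Z^2$ exactly in the subgroup generated by the longitude (the meridian survives to $\pi(D)$, which surjects onto $\Z$), so that intersection carries no information about $c/d$; and your guess that the kernel is normally generated by elements like $z^d\overline z^{-d}$ in the centers is not the correct description and is never justified.

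What the paper actually does, and what your proposal is missing, has three parts. First, the kernel of $\pi_1(Y_Q)\to\pi_1(Z_{c/d})$ is normally generated by the class of a \emph{single} curve $V_{c/d}$ on the fiber (Proposition~\ref{prop:generator}, from the handle structure of the disk exterior), and that class is computed explicitly as the commutator-type word $u\,\omega_{c/d}(u,v)\,u^{-1}\,\omega_{c/d}(u,v)^{-1}$ via the pillowcase slope analysis (Proposition~\ref{prop:slope_relation}). Second --- and this is the key device that resolves exactly the ``normal closures are slippery'' worry you raise --- the paper never compares two normal closures directly: it applies the diffeomorphism $\Psi_{c/d}\colon Z_{c/d}\to Z_0$ of~\cite[Proposition~8.5]{MeiZup_Qpq}, which carries $V_{c/d}$ to $V_0$ and $V_{c'/d'}$ to some $V_{c''/d''}$ with $c''=0$ iff $c/d=c'/d'$, thereby reducing the equality of two kernels to the triviality of the single element $[V_{c''/d''}]$ in $\pi_1(Z_0)=\langle x,y\mid x^p=y^q\rangle$. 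Third, that triviality question is settled by pushing the word into $\Z_p\ast\Z_q$ and invoking the free-product normal form (Proposition~\ref{prop:nontrivial_class}). Without the second step (or a genuine substitute for it), your outline does not close; supplying an ``honest invariant'' of a normal closure in $\pi(Q)$ is precisely the hard part, and the paper sidesteps it geometrically rather than solving it algebraically.
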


As in~\cite{MeiZup_disks}, this theorem can be strengthened to give a classification of fibered, homotopy-ribbon disks bounded by $Q$ up to isotopy, with no restriction on the boundary.
The proof of the following theorem as a consequence of Theorem~\ref{thm:main_IRB} is philosophically identical to the proof of~\cite[Theorem~1.5]{MeiZup_disks}, but we include it here for completeness and the reader's convenience.

\begin{theorem}
\label{thm:main_I}
	Two fibered, homotopy-ribbon disks $D_{c/d}$ and $D_{c'/d'}$  bounded by $Q_{p,q}$ are isotopic modulo local knotting if and only if they are isotopic if and only if there is some $n \in \Z$ such that $c'/d'=\pm(c+2nd)/d$.
\end{theorem}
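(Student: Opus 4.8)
The plan is to derive Theorem~\ref{thm:main_I} from Theorem~\ref{thm:main_IRB} together with the boundary-free classification~\cite[Theorem~1.5]{MeiZup_disks}, via an elementary argument with ambient isotopies of $B^4$ that parallels the deduction of~\cite[Theorem~1.5]{MeiZup_disks} from~\cite[Theorem~1.6]{MeiZup_Qpq}. Among the three conditions in the statement, ``isotopic'' $\Rightarrow$ ``isotopic modulo local knotting'' is immediate from Definition~\ref{def:LK}, and ``isotopic'' $\Leftrightarrow$ ``$c'/d' = \pm(c+2nd)/d$ for some $n\in\Z$'' is exactly~\cite[Theorem~1.5]{MeiZup_disks}, so the only implication left to prove is that LK-isotopic disks in $\Rr$ are isotopic.

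To prove that, I would begin with $2$--knots $S,S'$ and an ambient isotopy $\Phi_t$ of $B^4$ with $\Phi_0 = \id$ and $\Phi_1(D_{c/d}\#S) = D_{c'/d'}\#S'$, and then carry out the following chain. First, since connected sum with a $2$--knot inside a small interior ball is independent of the ball up to isotopy rel.\ boundary, $\Phi_1(D_{c/d}\#S)$ is isotopic rel.\ boundary to $\Phi_1(D_{c/d})\#S$, so that $\Phi_1(D_{c/d})\#S$ and $D_{c'/d'}\#S'$ are isotopic rel.\ boundary. Second, $\Phi_1$ is a diffeomorphism of $B^4$ with $\Phi_1(Q) = \Phi_1(\partial(D_{c/d}\#S)) = \partial(D_{c'/d'}\#S') = Q$, so $\Phi_1(D_{c/d})$ is again a fibered, homotopy-ribbon disk for $Q$, and hence by~\cite[Theorem~1.6]{MeiZup_Qpq} is isotopic rel.\ boundary to some $D_{c''/d''}\in\Rr$. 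Combining these two facts, $D_{c''/d''}\#S$ and $D_{c'/d'}\#S'$ are isotopic rel.\ boundary, so $D_{c''/d''}$ and $D_{c'/d'}$ are LK-isotopic rel.\ boundary, and Theorem~\ref{thm:main_IRB} forces $c''/d'' = c'/d'$. Finally, $D_{c/d}$ and $\Phi_1(D_{c/d})$ are isotopic (in the boundary-free sense) via $\Phi_t$ itself, and $\Phi_1(D_{c/d})$ is isotopic rel.\ boundary to $D_{c''/d''} = D_{c'/d'}$; concatenating shows $D_{c/d}$ and $D_{c'/d'}$ are isotopic, and substituting into~\cite[Theorem~1.5]{MeiZup_disks} then yields the arithmetic characterization and the full three-way equivalence.

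The step I expect to require the most care is the ``location-independence'' claim used first: that the interior $2$--knot summand can be slid so as to commute past the ambient diffeomorphism $\Phi_1$ without moving the boundary of $B^4$. This is a standard property of connected sums of embedded surfaces with $2$--knots, but because the overall argument is a sequence of isotopies only some of which fix the boundary, one must carefully track which of the four relations (isotopy, isotopy rel.\ boundary, LK-isotopy, LK-isotopy rel.\ boundary) is in force at each stage, so that Theorem~\ref{thm:main_IRB} and~\cite[Theorem~1.5]{MeiZup_disks} are invoked only where they are valid. It is also worth double-checking the routine facts that fiberedness and the homotopy-ribbon property are preserved under diffeomorphisms of $B^4$.
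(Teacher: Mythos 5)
Your proof is correct, but it takes a genuinely different route from the paper's. The paper first converts the isotopy of $D_{c/d}\#S$ and $D_{c'/d'}\#S'$ into a diffeomorphism $\Psi$ of $B^4$ via \cite[Proposition~2.1]{MeiZup_disks}, normalizes $\Psi\vert_{S^3}$ to a word in the generators $\alpha,\beta,\tau$ of $\textsc{Sym}(S^3,Q)$, and then uses the explicit extensions $\overline\alpha,\overline\beta,\overline\tau$ and their computed action on $\Rr$ (\cite[Proposition~4.1]{MeiZup_disks}) to produce a rel.-boundary diffeomorphism carrying $D_{c'/d'}\#S'$ to $D_{\pm(c-2nd)/d}\#S$; this pins down \emph{which} member of $\Rr$ arises and re-derives the arithmetic condition directly. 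You avoid all of that symmetry-group bookkeeping by observing that $\Phi_1(D_{c/d})$ is itself a fibered, homotopy-ribbon disk for $Q$ (since $\Phi_1(Q)=Q$ and both properties are diffeomorphism invariants), so the classification \cite[Theorem~1.6]{MeiZup_Qpq} places it in $\Rr$ up to isotopy rel.\ boundary without identifying which member; Theorem~\ref{thm:main_IRB} then forces that member to be $D_{c'/d'}$, and concatenating $\Phi_t$ with the rel.-boundary isotopy gives the free isotopy, after which the arithmetic characterization is imported from \cite[Theorem~1.5]{MeiZup_disks}. The trade-off is that your argument is shorter and less computational, but leans on the full strength of the disk classification where the paper only needs the explicit extension data; both arguments ultimately invoke the prior boundary-free classification at the end. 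One small caution: you assert that $\Phi_1(D_{c/d}\#S)$ equals $\Phi_1(D_{c/d})\#S$ with the \emph{same} $2$--knot $S$, which implicitly uses that an orientation-preserving diffeomorphism of a ball preserves the isotopy class of a local knot; the paper sidesteps this by writing the image summand as $\overline\psi^{-1}(S)$ and never claiming it equals $S$. You should do likewise --- writing $\Phi_1(D_{c/d}\#S)=\Phi_1(D_{c/d})\#S''$ for \emph{some} $2$--knot $S''$ is all your argument needs, since LK-isotopy rel.\ boundary permits arbitrary $2$--knot summands.
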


\begin{proof}[Proof of Theorem~\ref{thm:main_I},  assuming Theorem~\ref{thm:main_IRB}]
	If two disks are isotopic, then they are (trivially) LK-isotopic. 
	For the converse, suppose $D_{c/d}$ and $D_{c'/d'}$ are LK-isotopic.  
	This means that there exist 2--knots $(S^4, S)$ and $(S^4,S')$ such that the disks $(B^4,D) = (B^4,D_{c/d})\#(S^4,S)$ and $(B^4, D') = (B^4,D_{c'/d'})\#(S^4, S')$ are isotopic.
	By~\cite[Proposition~2.1]{MeiZup_disks} (see also~\cite[Lemma~2.2]{JuhZem_disks}), this is equivalent to the existence of a diffeomorphism $\Psi\colon B^4\to B^4$ taking $D'$ to $D$.

	Let $\psi = \Psi\vert_{S^3}$.
	Since $\psi(Q) = Q$, we can regard $\psi$ as a symmetry of $Q$.
	In~\cite[Proposition~3.3]{MeiZup_disks}, automorphisms $\alpha$, $\beta$, and $\tau$ representing generators of the mapping class group $\textsc{Sym}(S^3,Q)$ were identified.
	It follows that there is an ambient isotopy $f_t\colon S^3\to S^3$ such that $f_t(Q)=Q$ for all $t$ and such that $\psi\circ f_1$ is equal to a product of $\alpha$, $\beta$, and $\tau$.
	We can extend $f_t$ to an ambient isotopy $F_t\colon B^4\to B^4$ that is supported in a collar neighborhood of $S^3= \partial B^4$ and satisfies $F_t(D') = D'$ for all $t$.
	It follows that $\Psi\circ F_1$ is a diffeomorphism of $B^4$ taking $D'$ to $D$ whose restriction of $S^3$ is a product of $\alpha$, $\beta$, and $\tau$.
	The upshot of all of this is that we might as well assume without loss of generality that the original restriction $\Psi\vert_{S^3} = \psi$ is itself a product of $\alpha$, $\beta$, and $\tau$.
	
	Let $\overline\alpha$, $\overline\beta$, and $\overline\tau$ denote the extensions of the generators across $B^4$ defined in~\cite[Proposition~4.1]{MeiZup_disks}, and let $\overline\psi$ denote the corresponding extension of $\psi$.
	The effects of these extensions on the disk $D_{c/d}$ are described in~\cite[Proposition~4.1]{MeiZup_disks}.
	In particular, there is some $n\in\Z$ such that $\overline\psi^{-1}(D_{c/d})$ is isotopic rel. boundary to $D_{\pm(c-2nd)/d}$.	
	It follows that $\overline\psi^{-1}(D)$ is isotopic rel. boundary to $D_{\pm(c-2nd)/d}\#\overline\psi^{-1}(S)$.
	(Here, it is helpful to think of $D = D_{c/d}\#S$ as differing from $D_{c/d}$ only within a small 4--ball where $D_{c/d}$ is locally knotted according to $S$.)
		
	Let $\Psi' = \overline\psi^{-1}\circ\Psi$, and note that $\Psi'$ restricts to the identity on $\partial B^4$.
	Then we have
	$$\Psi'(D') = \overline\psi^{-1}\circ\Psi(D') = \overline\psi^{-1}(D) \stackrel{\partial}{\simeq} D_{\pm(c-2nd)/d}\#S,$$
	where the final equivalence is isotopy rel. boundary.
	Since $\Psi'$ is a diffeomorphism rel. boundary, and since $D' = D_{c'/d'}\#S'$, the conclusion is that $D_{c'/d'}$ and $D_{\pm(c-2nd)/d}$ are LK-isotopic rel. boundary,   again by~\cite[Proposition~2.1]{MeiZup_disks} or~\cite[Lemma~2.2]{JuhZem_disks}.
	By Theorem~\ref{thm:main_IRB}, it follows that $D_{c'/d'}$ and  $D_{\pm(c-2nd)/d}$ are isotopic rel. boundary,  and by~\cite[Theorem~1.4]{MeiZup_disks}, this implies that $d'=d$ and $c'= \pm (c-2nd)/d$.
	Therefore, $D_{c/d}$ and $D_{c'/d'}$ are isotopic. 
\end{proof}

To prove Theorem~\ref{thm:main_IRB}, it suffices to show that kernels of the inclusion-induced epimorphisms
$$\iota_{c/d}\colon \pi(Q)\to\pi(D_{c/d})$$
are pairwise distinct in $\pi(Q)$, since this implies the disks are LK-nonisotopic rel.\ boundary,  by~\cite[Proposition~6.1]{MilPow_19}.
This is accomplished in the next result.

\begin{proposition}
\label{thm:kernels}
	$\ker(\iota_{c/d}) = \ker(\iota_{c'/d'})$ if and only if $c/d = c'/d'$.
\end{proposition}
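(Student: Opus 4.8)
The backward implication is immediate: if $c/d=c'/d'$ then $D_{c/d}$ and $D_{c'/d'}$ are the same disk, so $\iota_{c/d}=\iota_{c'/d'}$. For the forward implication the plan is to work with the fibered structure throughout. The knot $Q=Q_{p,q}$ is fibered, with fiber a minimal-genus Seifert surface $\hat F$ and monodromy $\psi$, so $\pi(Q)\cong\pi_1(\hat F)\rtimes_\psi\Z$, the free group $\pi_1(\hat F)$ is the commutator subgroup of $\pi(Q)$, and the generator of the $\Z$-factor acts on $\pi_1(\hat F)$ as $\psi$. By~\cite{MeiZup_Qpq} the exterior of each $D_{c/d}$ fibers over $S^1$ with handlebody fiber $H_{c/d}$, compatibly with the fibration of $S^3\setminus\nu(Q)$, so $\pi(D_{c/d})\cong\pi_1(H_{c/d})\rtimes\Z$ --- a fixed group $G$, identified with $\pi(T_{p,q})$ via the product disk $D_{0/1}$ --- and $\iota_{c/d}$ is an isomorphism on the $\Z$-factors (the meridian survives) and the inclusion-induced map $\pi_1(\hat F)\to\pi_1(H_{c/d})$ on fibers. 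Hence $\ker(\iota_{c/d})\subseteq\pi_1(\hat F)$ and
\[
\ker(\iota_{c/d})=\ker\bigl(\pi_1(\hat F)\to\pi_1(H_{c/d})\bigr),
\]
the normal closure in $\pi_1(\hat F)$ of a complete meridian system for $H_{c/d}$. So the proposition reduces to showing that this compression kernel recovers $c/d$; for $d=d'=1$ this coincides with the content of Theorem~\ref{thm:twist-spins} for $J=T_{p,q}$, so the new work is in handling $d>1$.

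To attack this I would first make the $c/d$-dependence of $H_{c/d}$ explicit from the Meier--Zupan construction: each $H_{c/d}$ is obtained from a reference handlebody $H_{0/1}$ by altering a complete meridian system along a curve on $\hat F$ determined by the continued-fraction data of $c/d$, with the denominator-one case $H_{2n/1}$ realized by the $n$-fold twist $\tau_*^{\,n}$ of Section~\ref{sec:twist-spun}. Writing $N_{c/d}\trianglelefteq\pi_1(\hat F)$ for the resulting normal subgroup, a convenient first approximation is to abelianize $\pi_1(\hat F)$ as a $\Z[t^{\pm1}]$-module: the image of $N_{c/d}$ there is the submodule $\ker\bigl(\Aa(Q)\to\Aa(D_{c/d})\bigr)$ analyzed in Section~\ref{sec:Alex}, which already separates infinitely many of the disks (this is Proposition~\ref{coro:alternate} for the square knot) but which is blind to the twist-spin parameter, since multiplication by a power of $t$ is a module automorphism. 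The remaining coincidences must therefore be broken by descending one further step in the lower central or derived series of $\pi_1(\hat F)$, or by a direct normal-form computation in the amalgamated-product decomposition $\pi(Q)\cong\pi(T_{p,q})*_{\Z}\pi(T_{p,q})$, using the well-understood peripheral structure of torus knot groups.

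An alternative, parallel to the proof of Theorem~\ref{thm:twist-spins}, argues by contradiction through the 2-knot $(S^4,S)=(B^4,D_{c/d})\cup_{(S^3,Q)}\overline{(B^4,D_{c'/d'})}$. If $N_{c/d}=N_{c'/d'}$ then Seifert--van Kampen gives $\pi(S)\cong\pi(D_{c/d})=G$, and this isomorphism carries meridians to meridians since it is induced by inclusion. Sliding collar-supported isotopies of $(S^3,Q)$ and using the $B^4$-extensions of the symmetries from~\cite{MeiZup_disks} (which shift the parameter by $c/d\mapsto\pm(c+2nd)/d$) normalizes $S$ to a model 2-knot whose diffeomorphism type depends on $c/d$ and $c'/d'$ only through a common integer shift, hence essentially through the difference $c/d-c'/d'$. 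When $c/d=c'/d'$ this model is the double of a disk exterior, with group $G$; when the difference is nonzero one should be able to recognize it as a fibered 2-knot of twist-spin type whose group is not isomorphic to $\pi(T_{p,q})$ as a knot group, by the same mechanism as in the torus-knot case of Theorem~\ref{thm:twist-spins}: a meridian central in a 2-knot group forces the trivial group, while the longitude has infinite order in the peripheral torus of $\pi(T_{p,q})$.

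The main obstacle is precisely the case $d>1$, together with the rel-boundary strength of the statement. There $H_{c/d}$ is a genuinely new handlebody filling of $\hat F$, not a twist of $H_{0/1}$, so the clean automorphism description of the denominator-one case is unavailable, and one must directly preclude two distinct such fillings from sharing a complete meridian system up to normal closure. The Alexander module is too coarse for this --- it does not detect twist-spinning, which nonetheless changes the disk rel boundary --- so the argument must genuinely enter a noncommutative quotient of $\pi_1(\hat F)$, or, in the 2-knot formulation, must pin down the group of the normalized model 2-knot precisely enough to read off the parameter. Exhibiting an invariant of $\ker(\iota_{c/d})$ that is simultaneously computable and injective in $c/d$ is where the real work lies.
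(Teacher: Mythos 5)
There is a genuine gap: your write-up is a correct reduction plus a survey of candidate strategies, but the decisive step --- actually proving that the normal closures $N_{c/d}=\ker(\iota_{c/d})$ are pairwise distinct --- is never carried out, and you say so yourself in the final paragraph. The reduction to the compression kernels $\ker\bigl(\pi_1(\wh F)\to\pi_1(H_{c/d})\bigr)$ is sound in spirit, and you correctly diagnose that the Alexander module cannot finish the job and that $d>1$ is where the difficulty lies; but neither of your two proposed routes is made to work. The lower-central-series / normal-form idea is left entirely unexecuted, and the 2--knot doubling alternative breaks down precisely where you need it: for $d,d'>1$ the 2--knot $(B^4,D_{c/d})\cup_{(S^3,Q)}\overline{(B^4,D_{c'/d'})}$ is \emph{not} a twist-spin, so the centrality of a power of the meridian (the engine of Theorem~\ref{thm:twist-spins}) is unavailable, and "recognize it as a fibered 2--knot of twist-spin type" is an unjustified hope rather than an argument.

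For comparison, the paper closes this gap with three inputs you do not have. First, because the $pq$ components of $\Ll_{c/d}$ are pairwise isotopic in $Y_Q$, the kernel of $i_{c/d}\colon\pi_1(Y_Q)\to\pi_1(Z_{c/d})$ is normally generated by a \emph{single} class $[V_{c/d}]$ (Proposition~\ref{prop:generator}), so one never has to control a whole meridian system up to normal closure. Second, the diffeomorphism $\Psi_{c/d}\colon Z_{c/d}\to Z_0$ of \cite[Proposition~8.5]{MeiZup_Qpq} carries $V_{c/d}$ to $V_0$ and $V_{c'/d'}$ to some $V_{c''/d''}$ with $c''=0$ iff $c/d=c'/d'$; this normalizes the problem to a single question about the product disk, namely whether $[V_{c''/d''}]$ dies in $\pi_1(Z_0)$. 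Third, the Seifert-fibered/pillowcase analysis expresses $[V_{c/d}]$ explicitly as the commutator word $u\,\omega_{c/d}(u,v)\,u^{-1}\omega_{c/d}(u,v)^{-1}$ in $\pi_1(Z_0)=\langle x,y\mid x^p=y^q\rangle$ with $u=x^b$, $v=y^a$, and its image in the quotient $\Z_p\ast\Z_q$ is a cyclically unreduced word of syllable length $2c$, hence nontrivial unless $c=0$ (Proposition~\ref{prop:nontrivial_class}). That concrete, computable certificate of nontriviality is exactly the "invariant injective in $c/d$" you identified as the real work; without it, or a substitute of comparable strength, the proof is not complete.
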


The proof of Proposition~\ref{thm:kernels} relies on Propositions~\ref{prop:generator} and~\ref{prop:nontrivial_class}.
In order to establish these propositions, we must carry out a detailed examination of the exterior $Z_{c/d} = B^4\setminus\nu(D_{c/d})$, paying special attention to a circle-action it inherits from the standard circle-action on $(S^3,T_{p,q})$.
This analysis subsumes the remainder of this section, which culminates with the proof of Proposition~\ref{thm:kernels}, hence Theorem~\ref{thm:main_IRB}.

\subsection{Seifert fibered structures via circle actions}
\ 

We begin by recalling how the torus knot $T_{p,q}$ can be defined by considering a circle-action on $S^3$, and describe the corresponding Seifert fibered structure on the knot exterior.
This circle-action also gives rise to a 4--dimensional Seifert fibered structure on the exterior $Z_0$ of the product ribbon disk $D_0$ bounded by the generalized square knot $Q:=Q_{p,q}=T_{p,q} \#- T_{p,q}$.
In Subsection~\ref{subsec:disk_group}, we use this structure to derive two useful presentations for the group $\pi(D_0)\cong\pi(T_{p,q})$.

For our purposes, a manifold will be called \emph{Seifert fibered} if it admits a smooth, fixed-point-free circle-action. 
All the circle-actions considered here will be fixed-point free; hence, all manifolds admitting circle-actions will be Seifert fibered. 
For an introduction to basic notions of circle-actions on 3--manifolds and 4--manifolds, see~\cite{Fin_77,Fin_78,Orl_72}. 
We will refer to the orbits of points as \emph{circle-fibers}.
Recall that a subset $X$ of a space equipped with an $S^1$--action is \emph{invariant} if $g\cdot X = X$ for all $g\in S^1$
and \emph{equivariant} if $(g\cdot X)\cap X  \in \{ \varnothing, X \}$ for all $g\in S^1$.
Note that a subset $X$ of a Seifert fibered space is invariant exactly when it is a union of circle-fibers and is equivariant provided it intersects every circle-fiber transversely in at most one point (though this is not a necessary condition).

Let $D^2\subset\C$ denote the unit disk, and let $S^3 = \partial(D^2\times D^2)$. Adopting complex polar coordinates, we have
$$S^3 = \left\{\left(r_1e^{i\theta_1},r_2e^{i\theta_2}\right) \mid (r_1=1 \text {, } r_2 \leq 1)  \text{ or } ( r_1 \leq 1 \text{, } r_2=1)\right\}.$$
Let $p>q>1$ be coprime integers, and consider the circle-action on $S^3$ given by
$$\psi\cdot\left(r_1e^{i\theta_1},r_2e^{i\theta_2}\right):= \left(r_1e^{i\left(\theta_1+\frac{2\pi}{p}\psi\right)},r_2e^{i\left(\theta_2+\frac{2\pi}{q}\psi\right)}\right).$$
This action is fixed-point free.  

This action respects the standard genus-one Heegaard splitting $(\Sigma; H_1, H_2)$ of $S^3$, where
\begin{enumerate}
	\item $\Sigma=\left\{\left(e^{i \theta_1}, e^{i \theta_2}\right)\right\}$,
	\item $H_1 = \left\{\left(r_1e^{i\theta_1},r_2e^{i\theta_2}\right)\mid r_1 \leq 1 \text{, } r_2 = 1\right\}$, and
	\item $H_2 = \left\{\left(r_1e^{i\theta_1},r_2e^{i\theta_2}\right) \mid r_1 = 1 \text{, } r_2 \leq 1 \right\}$.
\end{enumerate}
The orbit of a point on $\Sigma$, say $(1,1)$, is the torus knot $J = T_{p,q}$.
For $i=1,2$, the core of $H_i$ is the orbit $c_i$ of a point satisfying $r_i=0$.
The cores are \emph{exceptional fibers}, while the rest of the orbits are \emph{ordinary fibers}.
The Heegaard torus $\Sigma$ is shown in Figure~\ref{fig:circle-action}, where the solid torus $H_1$ is ``inside" the depicted torus and $H_2$ includes the point at infinity. 

We let $\mu_1 = \{\theta_2=0\}, \mu_2 = \{\theta_1=0\} \subset \Sigma$, so $([\mu_1],[\mu_2])$ is an oriented basis for the homology of $\Sigma$ and $\mu_i$ bounds a compressing disk in $H_i$.
In these coordinates, we have
$$[J] = q [\mu_1] +p [\mu_2].$$
Figure~\ref{fig:circle-action} shows this configuration when $(p,q)=(7,5)$.
Let $\eta$ denote a simple closed curve on $\Sigma$ with $[\eta]= a[\mu_1]-b [\mu_2]$ in $H_1(\Sigma)$,  where $a$ and $b$ satisfy $ap+bq=1$ and $0<a<q$,  and which intersects $J$ in a single point.
Note that $\eta$ can be chosen to be equivariant, since it intersects $J$ in a single point.
For example, when $(p,q) = (7,5)$, we have $(a,b) = (3,-4)$.
In general, $\eta$ is a $(-b,a)$--curve.

Observe that $\eta$ is oriented so that $([\eta],[J])$ is an oriented basis for $H_1(\Sigma)$.  We have the following relationship between the two bases we've encountered:
$$\begin{bmatrix} [\eta] \\ [J]\end{bmatrix} = 
\begin{bmatrix} a & -b \\ q & p\end{bmatrix}
\begin{bmatrix} [\mu_1] \\ [\mu_2]\end{bmatrix}.$$
Inverting, we find that 
\begin{align}\label{eqn:musbyetaJ}
[\mu_1] = p[\eta] + b[J] \text{ and } [\mu_2] = -q[\eta] + a[J], 
\end{align}
which will be useful in Subsection~\ref{subsec:tk_group}.

\begin{figure}[h!]
	\centering
	\includegraphics[width=.7\linewidth]{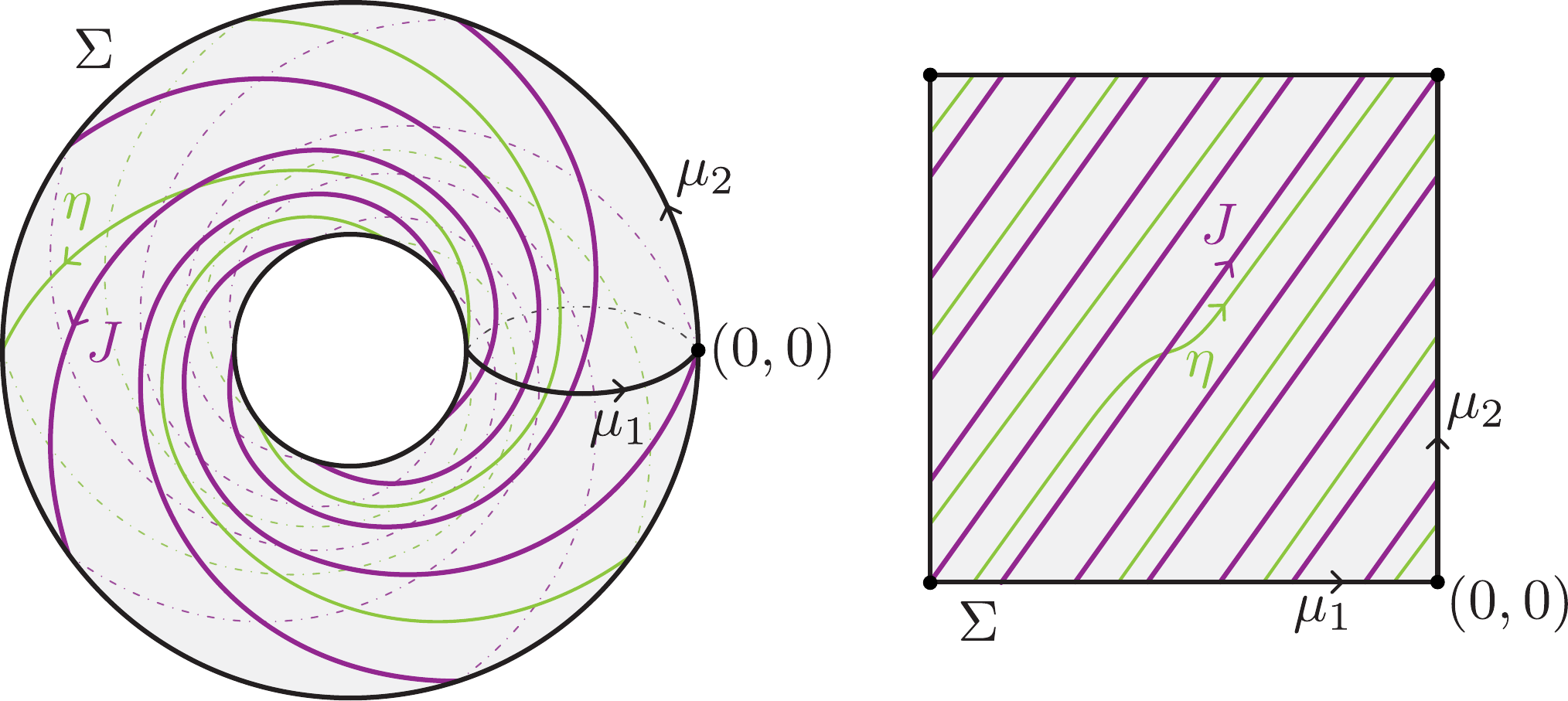}
	\caption{The torus knot $J = T_{7,5}$ as an orbit of a circle-action on $S^3$. Here, the curve $\eta$ is a $(4,3)$--curve.}
	\label{fig:circle-action}
\end{figure}

Let $\nu(c_i)$ be an open, invariant, tubular neighborhood of $c_i$ for $i=1,2$.
The circle-bundle structure on the closure $\overline\nu(c_1)$ is obtained by taking the $I$--bundle structure on $D^2\times I$ and gluing the ends together with a twist through $\frac{2\pi q}{p}$ radians.
For this reason, we refer to $c_1$ as an exceptional fiber of \emph{multiplicity} $p/q$.
Similarly, we see that the circle-bundle structure on $\overline\nu(c_2)$ is obtained by taking the $I$--bundle structure on $D^2\times I$ and gluing the ends together with a twist through $\frac{2\pi p}{q}$ radians.
For this reason, we refer to $c_2$ as an exceptional fiber of \emph{multiplicity} $q/p$.

Let $\nu(J)$ denote an open, invariant, tubular neighborhood of $J$.
The circle-bundle structure on the closure $\overline\nu(J)$ is obtained by taking the $I$--bundle structure on $D^2\times I$ and gluing the ends together with a twist through $2\pi pq$ radians.  In particular,  if we let $c_0\subset\partial \overline\nu(J)\cap\Sigma$ be an ordinary fiber, we have $\lk(c_0,J) = pq$.
Let $\mu_J \subset \partial \overline\nu(J)$ be a meridional curve such that $([\mu_J],[c_0])$ is an oriented basis for $H_1(\partial \overline\nu(J))$.
Let $E=S^3\setminus\nu(J)$ denote the torus knot exterior, which has been chosen to be invariant.

In what follows, if $X$ is any subset of $S^3$, we let $X^*$ denote the orbit space of the circle-action on $X$.
This is most natural when $X$ is invariant,  but we will also use this notation when $X$ is equivariant.
The $H_i^*$ are disks that meet along their common boundary circle $\Sigma^*$, so $\left(S^3\right)^*$ is a 2--sphere.
The points $c_1^*$ and $c_2^*$ are cone points of order $p$ and $q$, respectively.
It is in this sense that we regard $S^3$ as a Seifert fibered space over the orbifold $S^2(p, q)$.
Since $\overline\nu(J)^*$ is a disk, it follows that $S^3\setminus\nu(J)$ is a Seifert fibered space over the orbifold $D^2(p, q)$.
The quotient orbifolds $S^2(p, q)$ and $D^2(p, q)$ and relevant subspaces are shown in Figures~\ref{fig:sphere} and~\ref{fig:disk}.

Regard $S^3\setminus\nu(c_1\cup c_2)$ as $\Sigma\times [1,2]$, and consider the annulus $A=\eta\times [1,2]$.
Write $\partial A = \eta_1\cup\eta_2$, with $\eta_i\subset\partial\overline\nu(c_i)$, so $[\eta_1]=[\eta]=-[\eta_2] \in H_1(A)$.
Since each $\eta\times\{t\}$ intersects a regular fiber in a single point, it follows that $A^*$ is an annulus on $S^2(p, q)$.
Since $A\cap\overline\nu(J)$ is an equivariant meridional disk for $\overline\nu(J)$, we have that $P = A\setminus\nu(J)$ is a pair of pants, as is $P^* = E^*\setminus(\nu(c_1)^*\cup\nu(c_2)^*)$ in $S^2(p, q)$.
We can assume $\mu_J \subset\partial P$, so $\mu_J^* = \partial D^2(p, q)$.

\begin{figure}[h!]
	\begin{subfigure}{.33\textwidth}
		\centering
		\includegraphics[width=.8\linewidth]{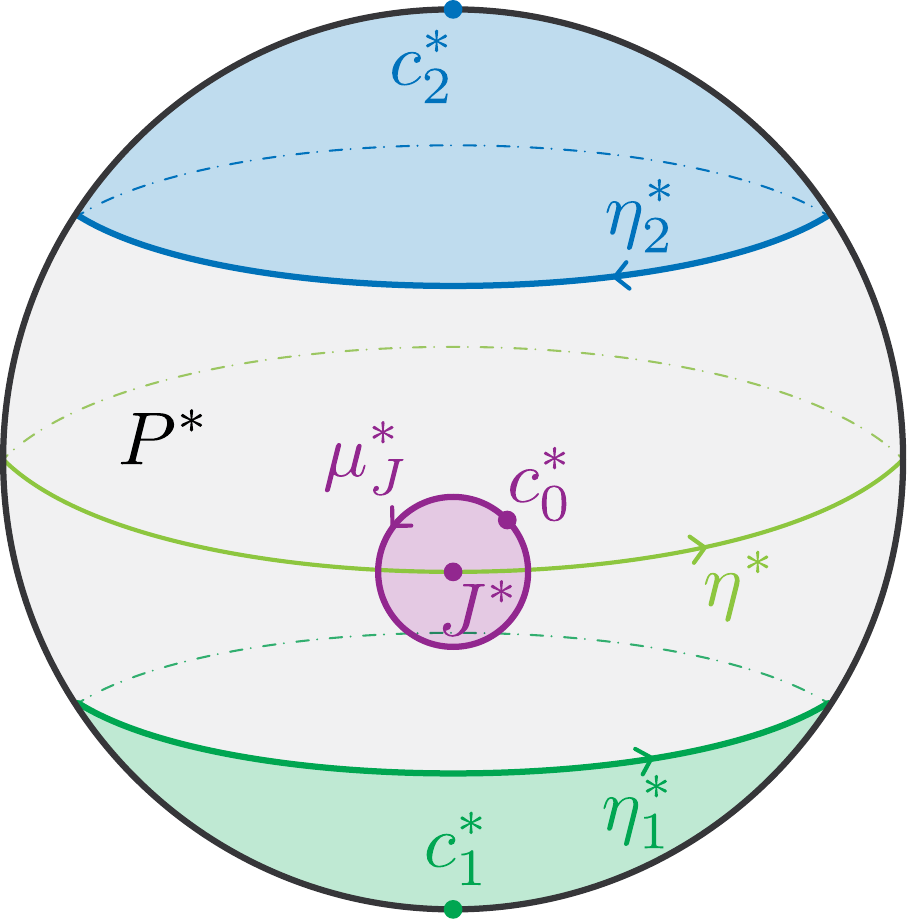}
		\caption{$S^2(p,q)=(S^3)^*$}
		\label{fig:sphere}
	\end{subfigure}%
	\begin{subfigure}{.33\textwidth}
		\centering
		\includegraphics[width=.85\linewidth]{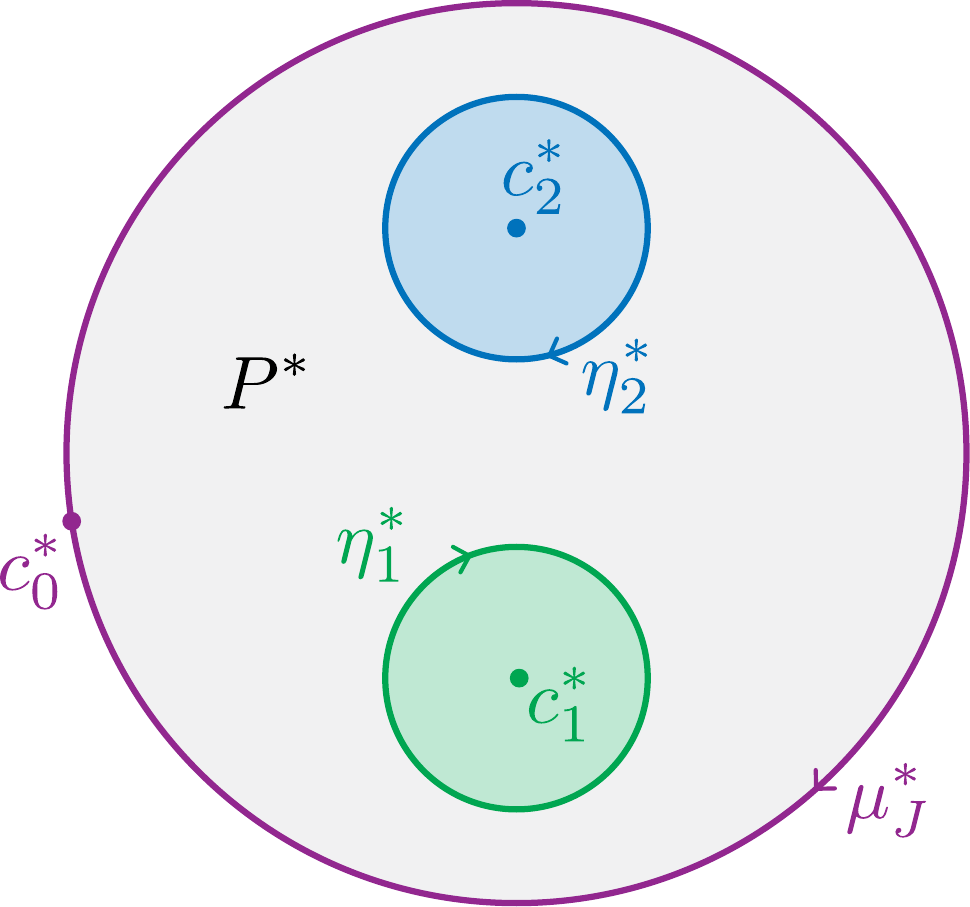}
		\caption{$D^2(p,q)=E^*$}
		\label{fig:disk}
	\end{subfigure}%
	\begin{subfigure}{.33\textwidth}
		\centering
		\includegraphics[width=.8\linewidth]{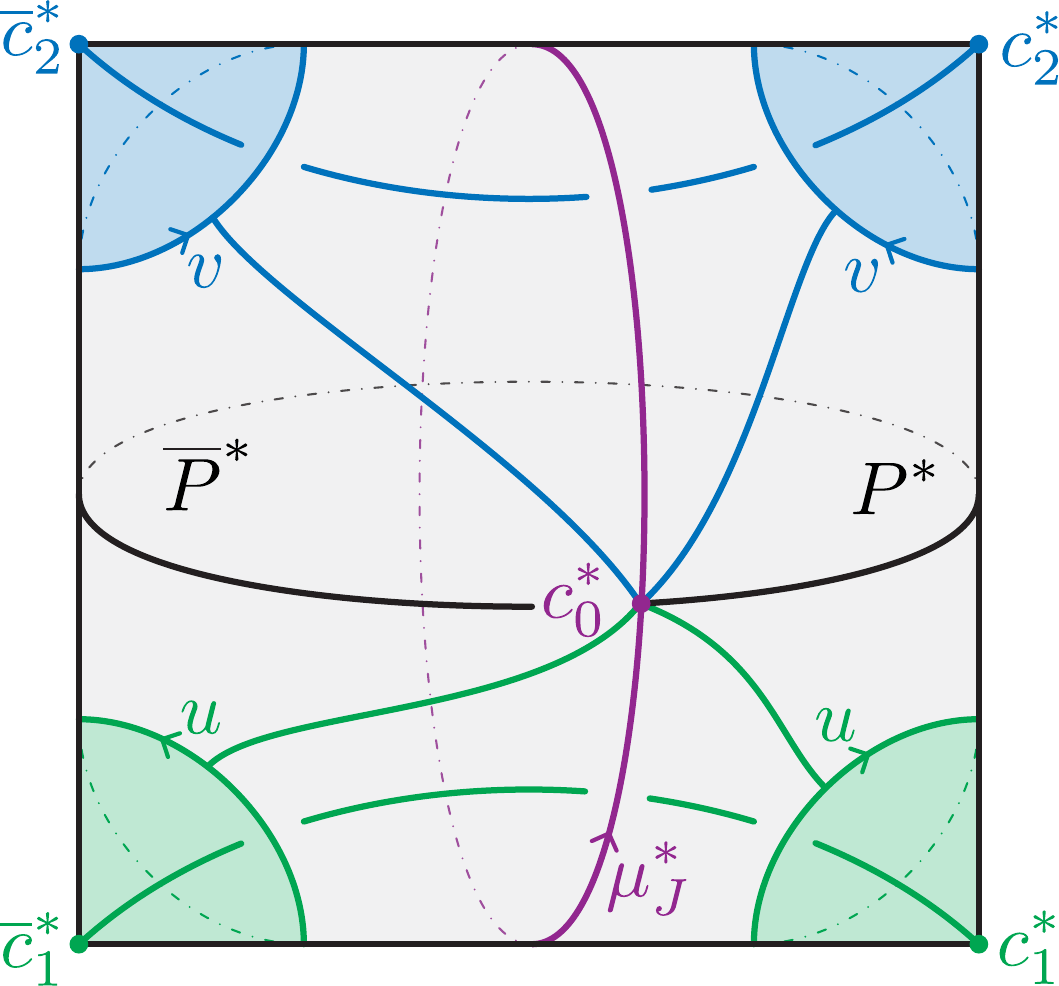}
		\caption{$B^3(p,q)= D^2(p,q) \times I$}
		\label{fig:pillow}
	\end{subfigure}%
	\caption{The relevant quotient orbifolds}
	\label{fig:orbifolds}
\end{figure}

Observe that the circle action restricted to $S^3 \setminus \nu(c_1 \cup c_2)$ gives it the structure of a (trivial) circle bundle over $A^*$,  which further restricts to give $E' = E \setminus \nu(c_1 \cup c_2)$ the structure of a (trivial) circle-bundle over $P^*$. 
In this sense,  it is reasonable to regard $A$ as a section of $S^3 \setminus \nu (c_1 \cup c_2) \to A^*$ and $P$ as a section of $E \setminus \nu(c_1 \cup c_2) \to P^*$.
In fact, in the following subsections, we will \emph{identify} various quotient spaces $X^*$ with preferred sections $X$ in order to relate their fundamental groups.

\subsection{Presentations for the group of the torus knot}
\label{subsec:tk_group}
\ 

We now compute the fundamental group of $E = S^3\setminus\nu(J)$ using the Seifert fibered structure.
Pick as a base-point the point $P\cap c_0$ in $E$.  This point can be identified with its quotient $c_0^*$, since $P$ is our preferred section of $P^*$.
Regard the curves $\eta_i$ as based loops by choosing arcs from $c_0^*$ to $\eta_i^*$ in $P^*$ and lifting to obtain arcs from $c_0\cap P$ to $\eta_i$ in $P$.
Denote the resulting homotopy classes by $u=[\eta_1]$ and $v=[\eta_2]$.
Also,  let
 $h$ denote the homotopy class of $c_0$,  based at $c_0^*=P \cap c_0.$
Recall that $E = (P\times S^1)\cup \overline\nu(c_1)\cup \overline\nu(c_2)$.
We obtain the presentation 
$$\pi_1(P\times S^1,c_0^*) = \left\langle u, v, h \mid uh=hu, vh=hv\right\rangle.$$
The solid tori $\overline\nu(c_i)$ give two more relations.
To identify them, note that
$$[\mu_1] = b [c_0] + p [\eta_1] \in H_1(\partial \overline\nu(c_1)),$$
and
$$[\mu_2] = a [c_0] + q  [\eta_2] \in H_1(\partial \overline\nu(c_2)).$$
Here, we are using the $\Sigma\times[1,2]$ product structure to identify $\partial\overline\nu(c_i)$ with $\Sigma$ and regard $\mu_i$ and $c_0$ as curves on $\partial\overline\nu(c_i)$, as well as using Equation~\ref{eqn:musbyetaJ}. 

Thus, we obtain the presentation
\begin{align}\label{eqn:pres1pi1E}
\pi_1(E,c_0^*) = \left\langle u, v, h \mid u^p = h^{-b}, v^q = h^{-a}, uh = hu, vh = hv\right\rangle.
\end{align}
We note in passing that, with the chosen orientations, in $\pi_1(E,c_0^*)$, we have
$$[\mu_J] = [\eta_2][\eta_1] = vu.$$
This relation is induced by the pair of pants $P$ itself.

We now make a change of variable to recover the standard presentation for $\pi(J)$.
Regarding $c_1$ and $c_2$ as loops based at $P\cap c_0$, we see that $\eta_1$ is homotopic to $c_1^{-b}$ in $H_1$, $\eta_2$ is homotopic to $c_2^{-a}$ in $H_2$, and $c_0$ is homotopic to both $c_1^p$ and $c_2^q$.
Let $x = [c_1]$ and $y = [c_2]$.
Since $u = x^{-b}$, $v = y^{-a}$, and $h = x^p = y^q$, the first two relations of Presentation~\ref{eqn:pres1pi1E} become vacuous and the second two relations reduce to a single relation.  We therefore obtain
$$\pi_1(E,c_0^*) = \left\langle x, y \mid x^p = y^q\right\rangle,$$
with $[\mu_J] =y^ax^b.$

\begin{remark}
\label{rmk:identify_1}
	The utility of the above set-up is that the pairs of pants $P$ and $P^*$ can be identified.
	In this way, we could have defined $u$ and $v$ to be $[\eta_1^*]$ and $[\eta_2^*]$ in $\pi_1(P^*,c_0^*)$.
	In Subsection~\ref{subsec:bridge_relator}, we will lean more heavily on this perspective.
\end{remark}

\subsection{Presentations for the group of the product ribbon disk}
\label{subsec:disk_group}
\ 

Consider the product ribbon disk $(B^4,D_0) =(S^3,J)^\circ\times I$, which was discussed in Section~\ref{sec:twist-spun}.
Then, $\partial(B^4,D_0) = (S^3,Q)$, where $Q = Q_{p,q} = \overline J\# J$.
Let $Z_0 = B^4\setminus\nu(D_0)$, and note that $Z_0 = (S^3\setminus\nu(J))\times I$.
The Seifert fibered structure on the torus knot exterior $E$ extends across the disk exterior, and the quotient orbifold is the solid pillowcase
$$B^3(p, q) = D^2(p, q)\times I,$$
whose total space is a 3--ball and whose cone points form two neatly embedded arcs.
See Figure~\ref{fig:pillow}, in which $(\partial E\times I)^*$ is $\mu^*$, and the right half of the boundary is $(E\times\{1\})^*$, and the left half is $(E\times\{0\})^*$ with reversed orientation.
This perspective makes clear the fact that $\partial Z_0$ is the union of the exterior $\overline E$ of the left-handed torus knot with the exterior $E$ of the right-handed torus knot along their identified boundary.
By construction, we get a Seifert fibered structure on $Y_Q = \partial Z_0 = \overline E\cup E$ with base orbifold $S^2(p, q, p, q)$, where $Y_Q$ denotes the result of 0--framed Dehn surgery on $Q$.
We denote the 2--sphere boundary of $B^3(p,q)$ by $S$.

The inclusion $E\hookrightarrow Z_0$ induces an isomorphism of fundamental groups, so the presentations calculated above can be regarded as presentations for $\pi_1(Z_0,c_0^*)$.
Thus, we have
\begin{align}
\label{eqn:pi1z0}
\pi_1(Z_0,c_0^*) = \left \langle u, v, h \mid u^p=h^{-b}, v^q=h^{-a}, uh=hu, vh=hv \right\rangle= \left\langle x, y \mid x^p = y^q \right\rangle.
\end{align}
The meridian $\mu_J$ includes into $\partial Z_0 = Y_Q$ as the Dehn surgery dual and  $[\mu_J] = vu = y^ax^b$.

We emphasize that $\eta_1$ and $\eta_2$ are curves in $P\subset E\subset \partial Z_0$ that are sections of the curves $\eta_1^*$ and $\eta_2^*$ whose loop homotopy classes $u$ and $v$ are indicated in Figure~\ref{fig:pillow}; cf Remark~\ref{rmk:identify_1}.

\subsection{The homotopy class of a slope on the pillowcase}
\label{subsec:bridge_relator}
\ 

Let $S'$ denote the 4--punctured sphere resulting from the removal of open neighborhoods of the cone points of the pillowcase.
Recalling the pair of pants $P^* \subset D^2(p, q)$, we have $S' = \overline {P^*}\cup_{\mu^*}P^*$.
Let $B'$ denote the genus--2 handlebody resulting from the removal of the arcs of cone points from the solid pillowcase, so $B' = P^*\times I$.
By the previous discussion, we have
$$\pi_1(B',c_0^*) = \langle u, v \mid - \rangle,$$
as depicted in Figure~\ref{fig:pillow}.
Here we are making the change in perspective previewed by Remark~\ref{rmk:identify_1}: we regard $u$ and $v$ as $[\eta_1^*]$ and $[\eta_2^*]$, respectively, and we regard $\eta_i^*\subset P^*\subset S'\subset B'$.

Our next task is to describe simple closed curves on the punctured pillowcase $S'$ as elements of $\pi_1(B',c_0^*)$.
Such simple closed curves are called \emph{slope-curves}, because there is one such curve for each $c/d\in\Q\cup\{\infty\}$.  We will be interested in slope-curves corresponding to $c/d$ when $c$ is even: for example, the slope-curve $\lambda_{10/7}$ is the boundary of a regular neighborhood of the arc with slope 10/7 shown in Figure~\ref{fig:slope}.  Henceforth,  we will abbreviate `slope-curve' to simply `slope',  as is common in the literature.

\begin{figure}[h!]
	\begin{subfigure}{.57\textwidth}
		\centering
		\includegraphics[width=.8\linewidth]{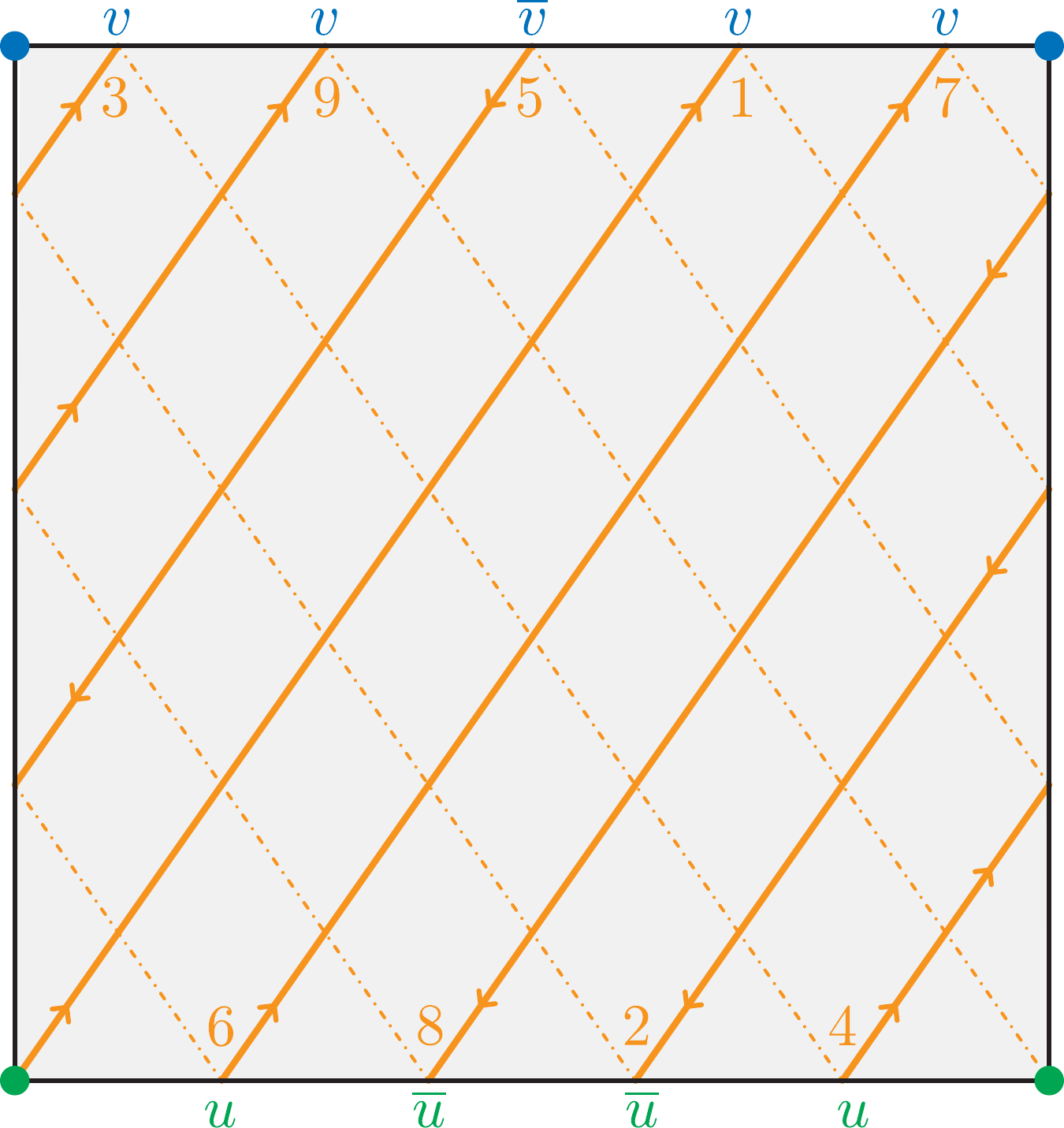}
		\caption{The arc $a_{10/7}$ on the boundary of the solid pillowcase}
		\label{fig:slope}
	\end{subfigure}%
	\begin{subfigure}{.43\textwidth}
		\centering
		\includegraphics[width=.8\linewidth]{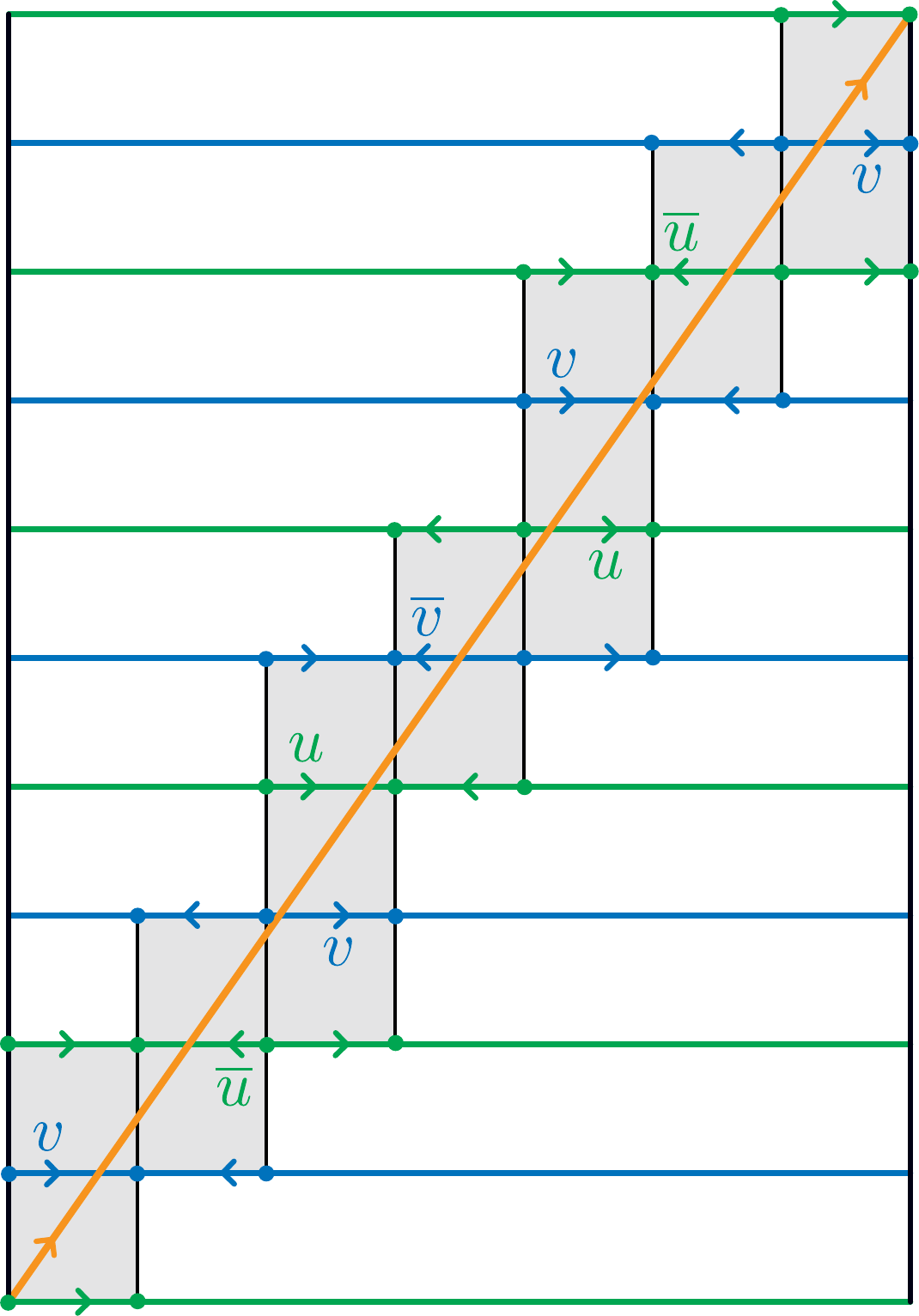}
		\caption{The arc $\widetilde a_{10/7}$ on the grid $R$}
		\label{fig:stairs}
	\end{subfigure}%
	\caption{Determining the homotopy class of a slope-curve}
	\label{fig:slope_stairs}
\end{figure}

Let $\lambda$ be a slope on $S'$.
We will describe $[\lambda]\in\pi_1(B',c_0^*)$ as a word in $u$ and $v$.
This characterization was given (without proof) by Hartley~\cite{Har_79}, who used it to study the Alexander polynomials of 2--bridge knots.

\begin{proposition}
\label{prop:slope_relation}
	Let $c/d\in\Q$ with $c$ even.
	Let $\lambda_{c/d}$ denote the corresponding slope on the punctured pillowcase $S'$.
	For $i\in\N$,  let $\varepsilon_i = (-1)^{\lfloor\frac{id}{c}\rfloor}$.
	Then,
	$$[\lambda_{c/d}] = u\cdot\omega_{c/d}(u,v)\cdot u^{-1}\cdot \omega_{c/d}(u,v)^{-1}$$
	as an element of $\pi_1(B', c_0^*)$,  where $\omega_{c/d}(u,v) = v^{\varepsilon_1} u^{\varepsilon_2}\cdots v^{\varepsilon_{c-1}}$.
\end{proposition}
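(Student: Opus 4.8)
\textbf{Proof strategy for Proposition~\ref{prop:slope_relation}.}

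The plan is to work entirely inside the genus-2 handlebody $B' = P^* \times I$, computing $[\lambda_{c/d}]$ by tracking an arc $a_{c/d}$ across the punctured pillowcase $S'$ and recording how it crosses the two ``cut systems'' dual to $u$ and $v$. First I would set up the combinatorial model: lift the arc $a_{c/d}$ of slope $c/d$ on the boundary of the solid pillowcase to a straight line segment of slope $c/d$ on a grid $R$ in the universal cover $\R^2$ of the square pillowcase, as in Figure~\ref{fig:stairs}; the slope-curve $\lambda_{c/d}$ is the boundary of a regular neighborhood of $a_{c/d}$, so it runs parallel to $a_{c/d}$ along one side, wraps around one endpoint (contributing the conjugating $u^{\pm1}$ factors), runs back antiparallel along the other side, and wraps around the other endpoint. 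This already explains the commutator-of-a-conjugate shape $u \cdot \omega \cdot u^{-1} \cdot \omega^{-1}$: the outbound and return passes contribute $\omega_{c/d}$ and $\omega_{c/d}^{-1}$, while the two arc-endpoints lie on the same puncture (the one representing $u$, say), contributing the $u$ and $u^{-1}$.

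Next I would identify the word $\omega_{c/d}(u,v) = v^{\varepsilon_1}u^{\varepsilon_2}\cdots v^{\varepsilon_{c-1}}$ syllable-by-syllable with the intersection pattern of the line of slope $c/d$ against the integer grid. The key point is that as the straight segment from $(0,0)$ to $(d,c)$ is traversed, it crosses $c-1$ interior horizontal grid lines and $d-1$ interior vertical grid lines; each horizontal crossing corresponds to a $v^{\pm1}$ syllable and each vertical crossing to a $u^{\pm1}$ syllable (or after reduction, to trivial syllables), and the sign $\varepsilon_i = (-1)^{\lfloor id/c\rfloor}$ records which ``sheet'' of the doubled square $S' = \overline{P^*}\cup_{\mu^*}P^*$ the arc is passing through at the $i$-th horizontal crossing --- the floor function counts how many vertical lines have been crossed by the time the $i$-th horizontal line is reached, and each such crossing flips which copy of $P^*$ we are in, hence flips the orientation with which we traverse the corresponding $\eta_i^*$ loop. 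I would make this precise by choosing an explicit identification of $\pi_1(B', c_0^*) = \langle u, v\rangle$ with the based loops $[\eta_1^*], [\eta_2^*]$ running around the two cone-point arcs, exactly as fixed in Subsection~\ref{subsec:disk_group} and Remark~\ref{rmk:identify_1}, and then reading off the word by following $\lambda_{c/d}$ and recording, in order, each time it links a cone-point arc and with what sign. An induction on $c$ (equivalently, on the number of grid boxes the segment passes through, using a continued-fraction / Euclidean-algorithm step relating slope $c/d$ to a slope with smaller numerator) organizes the bookkeeping cleanly.

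The main obstacle --- and the part requiring genuine care rather than routine checking --- is pinning down the sign conventions so that the formula $\varepsilon_i = (-1)^{\lfloor id/c\rfloor}$ comes out exactly right, including the hypothesis that $c$ is even (which guarantees $\omega_{c/d}$ begins and ends with a $v$-syllable, consistent with both arc-endpoints lying on the $u$-puncture and the word having an odd number $c-1$ of syllables). This is where Hartley's original statement was given without proof, and the delicacy is entirely in matching: (i) the orientation of $\eta_1^*$ versus $\eta_2^*$ as chosen in Figure~\ref{fig:pillow}; (ii) the direction in which the slope-arc is traversed; and (iii) the choice of which puncture is ``$u$'' and which is ``$v$.'' I would resolve this by first verifying the formula by hand in the base cases $c/d = 2/1$ (where $\lambda_{2/1}$ should be a small loop around the $u$-puncture, giving $\omega_{2/1} = v^{\pm1}$ and $[\lambda_{2/1}] = u v^{\pm1} u^{-1} v^{\mp1}$) and $c/d = 10/7$ (the worked example of Figure~\ref{fig:slope_stairs}, matching the depicted staircase), then running the inductive step and checking that the sign flips propagate correctly. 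Once the conventions are fixed, the inductive step is a short combinatorial argument about how inserting one more grid box modifies the staircase word, and the proposition follows.
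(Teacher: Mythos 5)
Your proposal follows essentially the same route as the paper's proof: $\lambda_{c/d}$ is the boundary of a neighborhood of an arc $a_{c/d}$ joining the two order-$p$ cone points, which forces the shape $u\cdot\omega\cdot u^{-1}\cdot\omega^{-1}$, and $\omega_{c/d}$ is read off by lifting $a_{c/d}$ to a straight segment on a rectangular grid and recording seam-crossings, with $\lfloor id/c\rfloor$ governing the signs exactly as you describe. One correction to your crossing-to-letter dictionary: in the paper's model \emph{all} $c-1$ letters of $\omega_{c/d}$ (the $u^{\pm}$'s as well as the $v^{\pm}$'s) arise from crossings of the horizontal grid-lines, which are alternately labeled $u^\pm$ and $v^\pm$ row by row, while the vertical grid-lines contribute no letters and only account for the sign flips; your assignment of a $u^{\pm1}$ syllable to each of the $d-1$ vertical crossings would yield a word of length $c+d-2$ rather than $c-1$, though your subsequent sign analysis (and your proposed base-case checks) implicitly uses the correct bookkeeping.
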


\begin{proof}
	By definition, the slope $\lambda_{c/d}$ is the boundary of a neighborhood of an arc $a_{c/d}$ running from $\bar c_1^*$ to $c_1^*$ on $S$.
	From this, we see that $[\lambda_{c/d}] = u\cdot\omega_{c/d}(u,v)\cdot  u^{-1}\cdot \omega_{c/d}(u,v)^{-1}$ for some word $\omega_{c/d}(u,v)$ in the generators $u$ and $v$.
	The word $\omega_{c/d}$ will alternate between instances of $u^\pm$ and $v^\pm$, picking up one instance every time the arc $a_{c/d}$ passes across either the top seam ($u^\pm$) or the bottom seam ($v^\pm$) of the pillowcase.
	
	The combinatorics of these passages can be understood by cutting the pillow open along three seams, and tiling a $c/2\times d$ rectangular grid $R$ with the resulting objects; see Figure~\ref{fig:stairs}.
	(Here we assume for simplicity that $c>0$; see Remark~\ref{rmk:negative_slope}.)
	The horizontal grid-lines of $R$ will be alternately labeled with $u^\pm$ or $v^\pm$, with the exponent alternating as one moves left and right and the variable alternating as one moves up and down.
	The top and bottom edges of the grid are labeled $u$.
	
	The arc $a_{c/d}$ will lift to an arc $\widetilde a_{c/d}$ from the bottom-left corner of $R$ to the top-right corner.
	To recover $\omega_{c/d}(u,v)$, record the sequential intersections of $\widetilde a_{c/d}$ with the horizontal grid-arcs labeled by $u^\pm$ and $v^\pm$.
	The variables will alternate, and the signs of the exponents will be given by the sequence $(\varepsilon_i)_{i=1}^{c-1}$.
	
	For example, by regarding either Figure~\ref{fig:slope} or~\ref{fig:stairs}, one finds that $\omega_{10/7} = v u^{-1} v u v^{-1} u v^{-1} u v$.
	The grey-shaded regions are the lifts of the pillowcase that intersect $\widetilde a_{10/7}$.
\end{proof}

\begin{remark}
\label{rmk:negative_slope}
	If $c/d$ is negative, then the analysis described above can be carried out in the same way, where the arc $a_{c/d}$ is the mirror of $a_{-c/d}$ across a vertical line in the page.
	This is a symmetry of the pillowcase, so none of the combinatorics are altered.
	Practically, $\widetilde a_{-c/d}$ will run from the bottom-right corner to the top-left corner in $R$, but the sequential encounters will be the same.
	It follows that $\omega_{-c/d} = \omega_{c/d}$.
	Alternatively, if $c/d<0$, then the algorithm works by setting $\varepsilon_i = \lceil\frac{id}{c}\rceil$.
\end{remark}

\begin{remark}
\label{rmk:large_numerator}
	The left-handed Dehn twist $\tau_0$ along the slope $\lambda_0$ of the pillowcase (shown as the black equator in Figure~\ref{fig:pillow}) extends over the solid pillowcase, so two slopes that are related by a power of this Dehn twist will represent the same word in $\pi_1(B',c_0^*)$.
	Since $\tau_0^n\left(a_{c/d}\right) = a_{c/(d-2nc)},$
	we have for any $n \in \Z$ that 
	$$\omega_{c/(d-2nc)}(u,v) = \omega_{c/d}(u,v).$$
\end{remark}

\begin{example}
\label{ex:twist_words}
	For a simple example, consider the words corresponding to the slopes corresponding to $\frac{c}{d} = \frac{2n}{1}$ for $n\in\Z$.
	In this case, 
	$$(\varepsilon_i)_{i=1}^{2n-1} = \left((-1)^{\lfloor\frac{i}{2n}\rfloor}\right)_{i=1}^{2n-1} =  (1, 1, \ldots, 1).$$
	So we have $\omega_{2n/1}(u,v) = (vu)^n$, and
	$$[\lambda_{2n/1}] = u(vu)^nu^{-1}(u^{-1}v^{-1})^n = (uv)^n(u^{-1}v^{-1})^n.$$
	Note that this is independent of the sign of $n$.
\end{example}

\subsection{Identifying the kernels}
\label{subsec:kernels}
\ 

Above, we described a Seifert fibered structure on $Y_Q$ whose base orbifold is the pillowcase $S = S^2(p, q, p, q)$.
The manifold $Y_Q$ also has the structure of a closed surface-bundle over $S^1$, where 
the fiber is the closed surface $\widehat F$ of genus $g = (p-1)(q-1)$ obtained by capping off a Seifert surface for $Q=Q_{p,q}$ in the 0--surgery, and the monodromy $\widehat\varphi\colon\widehat F\to\widehat F$ is periodic of order $pq$.
The surface-fiber $\widehat F$ is  equivariant with respect to the circle-action, and its quotient is the pillowcase $S$.  The induced quotient map $\pi\colon\widehat F\to S$ is a covering space map of degree $pq$ with four branched points (the cone points of the pillowcase), whose local branching degrees are either $p$ or $q$.
We refer the reader to~\cite[Section~4]{MeiZup_Qpq} for a thorough discussion of how the surface-fibers interact with the circle-actions in the present set-up.
In particular, by Proposition~4.10 of~\cite{MeiZup_Qpq}, when $c$ is even, the preimage $\pi^{-1}(\lambda_{c/d})$ is a collection $\Ll_{c/d}$ of $pq$ curves on $\widehat F$ that cut the surface-fiber into planar pieces.
The curves of $\Ll_{c/d}$ are pairwise isotopic in $Y_Q$ (see the proof of~\cite[Lemma~5.1]{MeiZup_Qpq}); in fact, there is a torus in $Y_Q$ that is the orbit of a section of $\lambda_{c/d}$ and that intersects $\widehat F$ in $\Ll_{c/d}$.

By~\cite[Section~8]{MeiZup_Qpq},  the exterior $Z_{c/d} = B^4\setminus\nu(D_{c/d})$ is built by attaching $pq$ 2--handles (framed by $\widehat F$) to $Y_Q \times I$ along $\Ll_{c/d}$ before capping off with $pq+1$ 3--handles and a 4--handle. 
Since the curves of $\Ll_{c/d}$ are isotopic in $Y_Q$,  the disk exterior $Z_{c/d}$ is diffeomorphic to the manifold built from $Y_Q \times I$ by attaching a single 2--handle along any component $V_{c/d}$ of $\Ll_{c/d}$ before capping off with two 3--handles and a 4--handle.

These remarks establish the following.

\begin{proposition}
\label{prop:generator}
	The inclusion-induced map $i_{c/d}\colon \pi_1(Y_Q)\to \pi_1(Z_{c/d})$ is an epimorphism, and $\ker(i_{c/d})$ is normally generated by $[V_{c/d}]$ in $\pi_1(Y_Q)$.
\end{proposition}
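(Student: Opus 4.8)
The plan is to read off the conclusion directly from the handle decomposition of $Z_{c/d}$ described immediately above the statement. Recall that $Z_{c/d}$ is obtained from $Y_Q\times I$ by attaching a single $2$--handle along the framed knot $V_{c/d}\subset Y_Q$, then capping off with two $3$--handles and a $4$--handle. Identifying $Y_Q$ with $Y_Q\times\{0\}\subset Z_{c/d}$, the first step is to observe that $3$--handles and $4$--handles do not affect $\pi_1$ (they are attached along simply connected manifolds), so it suffices to analyze the effect of attaching the single $2$--handle to $Y_Q\times I$.

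Next I would apply van Kampen to the decomposition $\bigl(Y_Q\times I\bigr)\cup(\text{$2$--handle})$. Writing the $2$--handle as $D^2\times D^2$ glued along $\partial D^2\times D^2=\nu(V_{c/d})$, the intersection region is homotopy equivalent to $\partial D^2\times D^2\simeq S^1$, which maps into $Y_Q\times I\simeq Y_Q$ as the loop $V_{c/d}$ and into the $2$--handle $D^2\times D^2\simeq\text{pt}$ trivially. Van Kampen then gives
$$\pi_1(Z_{c/d})\;\cong\;\pi_1(Y_Q)\big/\langle\!\langle [V_{c/d}]\rangle\!\rangle,$$
the quotient by the normal closure of $[V_{c/d}]$, and the quotient map is induced by inclusion $Y_Q\hookrightarrow Z_{c/d}$. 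This is exactly the assertion that $i_{c/d}$ is surjective with kernel normally generated by $[V_{c/d}]$. One small point to address: the base-point. The loop $V_{c/d}$ should be joined to the chosen base-point $c_0^*$ by an arc in $Y_Q$ so that $[V_{c/d}]$ is a well-defined element of $\pi_1(Y_Q,c_0^*)$; since only the normal closure matters, the choice of arc is irrelevant.

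The only genuine subtlety — and the step I expect to require the most care — is justifying that the multiple curves of $\Ll_{c/d}$ can be replaced by a single curve $V_{c/d}$ without changing $Z_{c/d}$ up to diffeomorphism, together with the corresponding reduction in the number of $3$--handles. This is precisely the content of the sentence ``Since the curves of $\Ll_{c/d}$ are isotopic in $Y_Q$, the disk exterior $Z_{c/d}$ is diffeomorphic to the manifold built from $Y_Q\times I$ by attaching a single $2$--handle$\dots$'', which in turn rests on~\cite[Section~8]{MeiZup_Qpq} and Proposition~4.10 and Lemma~5.1 of~\cite{MeiZup_Qpq}. Since all of that is quoted as established, the remaining argument is the routine van Kampen computation above; I would simply note that handle slides among the parallel $2$--handle cores (they cobound annuli in $Y_Q$) convert $pq-1$ of them into cancelling $2$--$3$ handle pairs, leaving a single $2$--handle along $V_{c/d}$, and then invoke van Kampen as described.
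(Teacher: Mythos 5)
Your proof is correct and matches the paper's: the paper offers no separate argument beyond the sentence ``These remarks establish the following,'' i.e.\ it derives the proposition from exactly the handle decomposition you cite, with the van Kampen computation and the irrelevance of the $3$-- and $4$--handles left implicit. You have simply filled in those standard details (including the base-point remark), so there is nothing to add.
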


In Subsection~\ref{subsec:disk_group}, we identified the presentation
$$\pi_1(Z_0,c_0^*) = \left \langle u, v, h \mid u^p=h^{-b}, v^q=h^{-a}, uh=hu, vh=hv \right\rangle,$$
where $u$ and $v$ were represented by $\eta_1$ and $\eta_2$, respectively, via the inclusion $P\subset E\subset Y_Q\subset Z_0$.
In Subsection~\ref{subsec:bridge_relator}, we characterized the homotopy classes of slope-curves on $S'\subset S^2(p,q,p,q)$ as words in $u$ and $v$.
As discussed in Remark~\ref{rmk:identify_1}, we have identified $P^*$ with its section $P$, which allows us to identify $S'= \overline{P^*} \cup P^*$ with $\overline{P} \cup P \subset \overline{E} \cup E = Y_Q \subset Z_0$.
This identification justifies the simultaneous use of $u$ and $v$ in these \emph{a priori} distinct settings.
Moreover, we see that $\lambda_{c/d}$ is identified with $V_{c/d}$ via this set-up.
So, by describing $[\lambda_{c/d}]$ as a word in $u$ and $v$ in $\pi_1(S',c_0^*)$, we have also described $[V_{c/d}]$ as a word in $u$ and $v$ in $\pi_1(Z_0,c_0^*)$.
(The use of the same notation for the base-point is reflective of the overarching philosophy here.)
We summarize this with the following diagram:
$$
\begin{tikzcd}
& V_{c/d} \arrow[r,hook] &  Y_Q \arrow[r,hook] \arrow[d,two heads] & Z_0 \arrow[d,two heads] \\
 \lambda_{c/d} \arrow[r,hook] \arrow[ur,leftrightarrow] & S' \arrow[r,hook] \arrow[ur,hook] & S^2(p,q,p,q)\arrow[r,hook] & B^3(p,q)
\end{tikzcd}
$$
In light of this, we can characterize when the image of $[V_{c/d}]$ is  trivial in $\pi_1(Z_0,c_0^*)$.

\begin{proposition}
\label{prop:nontrivial_class}
	Let $c/d\in\Q$ with $c$ even.
	The class $[V_{c/d}]$ is trivial in $\pi_1(Z_0)$ if and only if $c/d=0$.
\end{proposition}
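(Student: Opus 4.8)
The plan is to leverage the presentation $\pi_1(Z_0, c_0^*) = \langle x, y \mid x^p = y^q\rangle$ together with the explicit word for $[V_{c/d}] = [\lambda_{c/d}]$ given by Proposition~\ref{prop:slope_relation}. Recall that $[\lambda_{c/d}] = u\,\omega(u,v)\,u^{-1}\,\omega(u,v)^{-1}$ where $\omega = \omega_{c/d}(u,v)$ is an alternating word in $u^{\pm 1}$ and $v^{\pm 1}$. Since $u = x^{-b}$, $v = y^{-a}$, and $h = x^p = y^q$ is central, the class $[V_{c/d}]$ is a commutator $[u, \omega]$ in the one-relator group $G = \langle x,y \mid x^p = y^q\rangle$, and it vanishes precisely when $u$ and $\omega(u,v)$ commute in $G$. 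So the statement to prove is: $u$ commutes with $\omega_{c/d}(u,v)$ in $G$ if and only if $c/d = 0$.

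The key structural fact is the description of centralizers in $G = \langle x,y\mid x^p=y^q\rangle$, which is a well-understood group (the fundamental group of the torus knot exterior, with center generated by $h=x^p=y^q$). When $(p,q)\neq(1,1)$, the quotient $G/\langle h\rangle \cong \Z/p * \Z/q$ is a nontrivial free product, and the centralizer of any element not in $\langle h\rangle$ is abelian; concretely, the centralizer of $u$ (which maps to the generator $\bar x$ of $\Z/p$, up to conjugacy, since $\gcd(b,p)=1$) is $\langle x\rangle\cdot\langle h\rangle = \langle x\rangle$. Therefore $u$ commutes with $\omega_{c/d}(u,v)$ if and only if $\omega_{c/d}(u,v)\in\langle x, h\rangle = \langle x\rangle$, equivalently the image of $\omega_{c/d}(u,v)$ in $\Z/p * \Z/q$ lies in the factor $\Z/p$. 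First I would verify the easy direction: when $c/d=0$, one checks directly from the combinatorics of Proposition~\ref{prop:slope_relation} (or Example~\ref{ex:twist_words} with $n=0$, noting $\lambda_0$ is the equatorial slope) that $\omega_{0}$ is trivial, so $[V_0]=1$. For the converse, I would argue that whenever $c\neq 0$ the word $\omega_{c/d}(u,v)$ is a genuinely alternating nonempty word in $u^{\pm 1}, v^{\pm 1}$, which under $u\mapsto\bar x$, $v\mapsto\bar y$ maps to a reduced word of positive syllable length in $\Z/p * \Z/q$ that involves the $\Z/q$-factor, hence does not lie in $\Z/p$ — so $u$ does not commute with it.

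The main obstacle is making the last claim rigorous: one must rule out the possibility that $\omega_{c/d}(u,v)$, while written as an alternating word in $u,v$, collapses under the substitution $u = x^{-b}, v = y^{-a}$ to something central or trivial in $G$. The point is that passing to $\Z/p*\Z/q$ kills exactly the subgroup $\langle h\rangle$, and a nonempty alternating word in $u^{\pm1}, v^{\pm 1}$ maps to a word whose syllables are powers of $\bar x$ and $\bar y$ with exponents $-b\varepsilon_i \pmod p$ and $-a\varepsilon_j\pmod q$; since $\gcd(a,q)=\gcd(b,p)=1$ and each $\varepsilon_i = \pm 1$, none of these syllables is trivial, so the word is already reduced in the free product and has syllable length equal to its length as an alternating word — in particular it is not in the $\Z/p$-factor unless it is empty. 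The only subtlety is confirming $\omega_{c/d}$ is nonempty and starts/ends appropriately for $c\neq 0$, which follows because $\omega_{c/d}$ has length $c-1 \geq 1$ by Proposition~\ref{prop:slope_relation}; the case $|c|=2$ (length-one word $v^{\varepsilon_1}$) is handled identically since $v^{\pm1} = y^{\mp a}\notin\langle x\rangle$. This reduces everything to standard normal-form arguments in free products, so I expect no real difficulty beyond careful bookkeeping of the word $\omega_{c/d}$ and the centralizer computation in $G$.
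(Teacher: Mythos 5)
Your proposal is correct and follows essentially the same route as the paper: both arguments pass to the quotient $\Z_p \ast \Z_q$ obtained by killing the center $\langle h\rangle$ and use $\gcd(b,p)=\gcd(a,q)=1$ to see that the image of the alternating word is already reduced in the free product, hence nontrivial when $c\neq 0$. The only cosmetic difference is that you route the nontriviality through the centralizer of $u$ (namely $\langle x\rangle$), whereas the paper simply observes that the image of the full commutator $[V_{c/d}]$ is a reduced word of syllable length $2c$; both reduce to the same normal-form argument.
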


\begin{proof}
	By the above discussion, we have that in $\pi_1(Z_0,c_0^*)$
	$$[V_{c/d}] = u\cdot\omega_{c/d}(u,v)\cdot u^{-1}\cdot[\omega_{c/d}(u,v)]^{-1},$$
since $[\lambda_{c/d}]$ has this form in $\pi_1(B',c_0^*)$, by Proposition~\ref{prop:slope_relation}.
	Making the change of variable $u\mapsto x^b$ and $v\mapsto y^a$, we have
	$$[V_{c/d}] = x^b\cdot\omega_{c/d}(x^b,y^a)\cdot x^{-b}\cdot[\omega_{c/d}(x^b,y^a)]^{-1}$$
	in $\pi_1(Z_0,c_0^*)$.
	
	Consider the epimorphism $\rho\colon \pi_1(Z_0)\twoheadrightarrow \Z_p\ast\Z_q$ defined by adding the relations $x^p = y^q = 1$.
	Since $\gcd(p,b)=\gcd(q,a)=1$, and since the terms of $\omega(x^b,y^a)$ alternate between instances of $x^{\pm b}$ and $y^{\pm a}$, it follows that $\rho([V_{c/d}])$ has syllable length $2c$ in the free product and cannot be cyclically reduced.
	If $[V_{c/d}]$ were trivial in $\pi_1(Z_0,\ast)$, then its image $\rho([V_{c/d}])$ would be trivial in $\Z_p\ast\Z_q$, a contradiction, unless $c=0$.
	
	Conversely, if $c=0$, then $\lambda_0$ bounds a disk in $W$, so $[\lambda_0]$ is trivial in $\pi_1(B',c_0^*)$, hence $[V_0]$ is trivial in $\pi_1(Z_0,c_0^*)$.
\end{proof}

We are finally ready to assemble the proof of Theorem~\ref{thm:kernels}.

\begin{proof}[Proof of Theorem~\ref{thm:kernels}]
	Consider the following inclusion-induced epimorphisms:
	$$\pi(Q) \stackrel{j}{\twoheadrightarrow} \pi_1(Y_Q) \stackrel{i_{c/d}}{\twoheadrightarrow}\pi(D_{c/d}).$$
	The epimorphism is induced by including $S^3\setminus\nu(Q)$ into $Y_Q$, its 0--framed Dehn filling.
	This adds a relation to $\pi(Q)$.
	The map $i_{c/d}$ is as described in Proposition~\ref{prop:generator}.
	Since these maps are induced by inclusion, $\iota_{c/d} = i_{c/d}\circ j$.
	We observe that $\ker(\iota_{c/d})= \ker(\iota_{c'/d'})$ if and only if $\ker(i_{c/d})= \ker(i_{c'/d'})$, since these maps are surjective.
	
	Suppose that $\ker(\iota_{c/d}) = \ker(\iota_{c'/d'})$.
	By the above observation, this implies $\ker(i_{c/d}) = \ker(i_{c'/d'})$.
	By Proposition~\ref{prop:generator}, we know that $\ker(i_{c'/d'})$ is normally generated by $[V_{c'/d'}]\in\pi_1(Y_Q)$, so $\ker(i_{c/d})$ is normally generated by $[V_{c'/d'}]\in\pi_1(Y_Q)$.
	By~\cite[Proposition~8.5]	{MeiZup_Qpq},  there exists a diffeomorphism $\Psi_{c/d}\colon Z_{c/d}\to Z_0$ such that $\Psi_{c/d}(V_{c/d}) = V_0$ and $\Psi_{c/d}(V_{c'/d'})=V_{c''/d''}$ for some $c''/d''$ that equals $0$ if and only if $c/d$ equals $c'/d'$.
	It follows that $\ker(i_0)= (\Psi_{c/d})_*(\ker(i_{c/d}))$ is normally generated by $[V_{c''/d''}]=[\Psi_{c/d}(V_{c'/d'})]\in\pi_1(Y_Q)$.
	In particular, we have that $[V_{c''/d''}]\in\ker(i_0)$.
	This means that $[V_{c''/d''}]$ is trivial in $\pi_1(Z_0)$.
	By Proposition~\ref{prop:nontrivial_class}, this implies $c''=0$, which means that $c'/d'=c/d$, as desired.
\end{proof}

\section{Other examples}
\label{sec:examples}

In this section, we describe general constructions of knots with many disks that are LK-nonisotopic, as alluded to in the introduction.

\subsection{Invertible concordances}
\label{subsec:invertible}
\ 

In this section, we give examples of hyperbolic knots with infinitely many ribbon disks that are pairwise nonisotopic modulo local knotting.
Any concordance from $K$ to $J$ can be used to turn a slice disk for $J$ into a slice disk for $K$, but it is not guaranteed in general that distinct slice disks for $J$ will give rise to distinct slice disks for $K$.
For example, if $K$ is the unknot, then any two slice disks for $J$ will become  LK-isotopic rel.~boundary as slice disks for $K$.
However,  we can control the situation when the concordance from $K$ to $J$ is \emph{invertible} in the sense of Sumners~\cite{Sumners}; see the proof of Proposition~\ref{prop:invert} for the definition.

\begin{proposition}
\label{prop:invert}
	Let $C$ be an invertible concordance from $K$ to $J$. 
	Let $D_1$ and $D_2$ be slice disks for $J$. 
	Then $C \cup_J D_1$ and $C \cup_J D_2$ are isotopic rel. boundary modulo local knotting if and only if $D_1$ and $D_2$ are.  
\end{proposition}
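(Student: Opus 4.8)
The plan is to exploit the defining property of an invertible concordance: $C$ is invertible if there is a concordance $C'$ from $J$ to $K$ such that $C \cup_J C'$ is the product concordance $K \times I$ (rel.\ boundary). The ``easy'' direction is that if $D_1$ and $D_2$ are LK-isotopic rel.\ boundary, then so are $C \cup_J D_1$ and $C \cup_J D_2$: given 2--knots $S_1, S_2$ with $D_1 \# S_1$ isotopic rel.\ boundary to $D_2 \# S_2$, one can realize these connected sums inside a small ball disjoint from $C$, so $(C \cup_J D_1) \# S_1 = C \cup_J (D_1 \# S_1)$ is isotopic rel.\ boundary to $C \cup_J (D_2 \# S_2) = (C \cup_J D_2) \# S_2$ by an isotopy supported away from $C$. (One should take a moment to check that the isotopy rel.\ boundary between $D_1 \# S_1$ and $D_2 \# S_2$ can be taken to fix a collar and hence glues to the identity on $C$, but this is routine.)

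For the harder direction, suppose $C \cup_J D_1$ and $C \cup_J D_2$ are LK-isotopic rel.\ boundary; we want to conclude the same for $D_1$ and $D_2$. The key move is to glue the inverse concordance $C'$ onto the boundary. Regard $B^4$ as $(S^3 \times I) \cup B^4$, i.e.\ enlarge the ball by a collar; then $C' \cup_K (C \cup_J D_i)$ is a slice disk for $J$ sitting in this enlarged $B^4 \cong B^4$. Since $C' \cup_K C$ is the product concordance rel.\ boundary, there is an ambient isotopy of $S^3 \times I$ (rel.\ the outer boundary) carrying $C' \cup_K C$ to $J \times I$; extending by the identity, this shows $C' \cup_K (C \cup_J D_i)$ is isotopic rel.\ boundary to the slice disk obtained from $D_i$ by attaching a product collar, which is just $D_i$ itself (up to the canonical identification of $B^4$ with the collar-enlarged $B^4$). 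Applying the LK-isotopy hypothesis upstairs and pushing it down through the gluing of $C'$ — again arranging the connected-sum 2--knots to live in balls disjoint from $C'$ — gives that $D_1$ and $D_2$ are LK-isotopic rel.\ boundary.

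The main obstacle, and the point deserving the most care, is bookkeeping about ambient isotopies and the rel.\ boundary condition: one must ensure (i) that the isotopy identifying $C' \cup_K C$ with the product is genuinely rel.\ the free boundary component so that it extends by the identity over the rest of $B^4$ and over any connect-summed 2--knots, and (ii) that gluing a concordance to $\partial B^4$ and then re-identifying with $B^4$ is compatible with the operation of connected-summing a 2--knot to a slice disk (so that ``$\#S$'' commutes with ``$C' \cup_K -$''). Both are standard collar-and-support arguments, but stating them cleanly — perhaps by fixing once and for all a diffeomorphism $(S^3 \times I) \cup_{S^3} B^4 \to B^4$ and checking naturality — is where the proof has to be written with precision. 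Everything else is formal manipulation of the definitions in Definition~\ref{def:LK} and Sumners's notion of invertibility.
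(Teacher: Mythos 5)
Your proposal is correct and follows essentially the same route as the paper: glue the (reversed) inverse concordance onto the $K$--end, use the product structure to cancel $C$, and note that the local-knotting $2$--knots can be kept in the interior of the $D_i$ so that the entire LK-isotopy rel.\ boundary pushes through the gluing. One caution: the definition of invertibility you state ($C \cup_J C' \simeq K \times I$, the composite that is the identity concordance of $K$) is not the condition your argument actually invokes ($C' \cup_K C \simeq J \times I$, the identity concordance of $J$); these one-sided conditions are not equivalent in general, and the argument genuinely needs the latter, which is the convention the paper adopts and verifies for its examples.
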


\begin{proof}
	One direction is immediate.
	For the converse assume that $C \cup_J D_1$ and $C \cup_J D_2$ are  LK-isotopic rel. boundary.  
	It follows that there exist 2--knots $S_1$ and $S_2$ such that the $(C \cup_J D_i) \# S_i$ are isotopic rel. boundary.
	We can assume that the connected sum region of each of these occurs in the interior of $D_i$. 

	Let $C'$ be the inverse of $C$, so $C'$ is  a concordance from $K$ to $J$ such that $\overline C' \cup_K C$ is isotopic rel. boundary to the identity concordance from $J$ to $J$.
	It follows that 
	\[\overline C' \cup_K \left( (C \cup_J D_1) \# S_1 \right) \stackrel{\partial}{\simeq} 
	\left (\overline C' \cup_K C\right) \cup_J \left( D_1  \# S_1\right) \stackrel{\partial}{\simeq} 
	D_1 \# S_1
	\]
	is isotopic rel. boundary to
	\[\overline C' \cup_K \left( (C \cup_J D_2) \# S_2 \right) \stackrel{\partial}{\simeq} 
	\left (\overline C' \cup_K C \right) \cup_J \left( D_2  \# S_2 \right) \stackrel{\partial}{\simeq} 
	D_2 \# S_2.
	\]
So $D_1$ and $D_2$ are LK-isotopic rel. boundary.
\end{proof}

\begin{corollary}
\label{coro:hyp_fib}
	There exist infinitely many fibered, hyperbolic knots each of which bounds infinitely many fibered, ribbon disks that are pairwise nonisotopic modulo local knotting. 
\end{corollary}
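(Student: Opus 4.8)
The plan is to combine Theorem~\ref{thm:main_answer} (or more precisely the ribbon case, invoking~\cite[Theorem~1.6]{MeiZup_disks} for $\{p,q\} = \{2,2k+1\}$) with Proposition~\ref{prop:invert}. The source knot will be a generalized square knot $Q = T_{2,2k+1} \# \overline{T_{2,2k+1}}$, which bounds infinitely many fibered, ribbon disks that are pairwise LK-nonisotopic rel.\ boundary. By Proposition~\ref{prop:invert}, any knot $K$ admitting an invertible concordance $C$ to $Q$ will inherit infinitely many slice disks $C \cup_Q D_{c/d}$ that remain pairwise LK-nonisotopic rel.\ boundary. So the entire task reduces to producing infinitely many fibered, hyperbolic knots $K$ that admit invertible concordances to some generalized square knot $Q$, in such a way that the resulting disks $C \cup_Q D_{c/d}$ are themselves fibered and ribbon.

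First I would recall the standard method for building invertible concordances: given a knot $J$ and any knot $J'$, there is an invertible concordance from $J \# J'$ to $J$ obtained by ``cancelling'' a $J'$-summand — more usefully, by the work of Sumners and its refinements (see also Gordon's survey~\cite[]{}), one can realize invertible concordances from suitable satellites or from knots obtained by attaching ribbon moves. Concretely, the cleanest route is: pick a ribbon knot $R$ with the property that $Q \# R'$ is hyperbolic for appropriate $R'$, or directly appeal to the fact that a ribbon concordance built from adding a single (suitably chosen) trivial band is invertible, and that one can arrange the ambient knot type at the source end to be hyperbolic by choosing the band to be sufficiently complicated/generic. Fiberedness of $K$ follows because $Q$ is fibered and a ribbon concordance (being built from $1$-handles and $2$-handles only) can be chosen to preserve fiberedness — alternatively, one picks $K$ fibered from a known family and exhibits the invertible concordance afterward. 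To get infinitely many genuinely distinct such $K$, vary the band (or the parameter $k$ in $T_{2,2k+1}$), distinguishing the resulting knots by hyperbolic volume or by genus $g(K) = g(Q) = (2k+1-1)(2-1)/\ldots$ — more simply, infinitely many of the $K$ have distinct Alexander polynomials.

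The fibered, ribbon property of the disks $C \cup_Q D_{c/d}$: since $D_{c/d}$ is ribbon and $C$ is a ribbon concordance (has a Morse function with no maxima in the relevant direction), the glued disk $C \cup_Q D_{c/d}$ is ribbon; fiberedness of the disk is inherited from the fiberedness of $D_{c/d}$ together with the fact that a ribbon concordance trace adds only index-$1$ and index-$2$ critical points to the exterior, and one checks the monodromy extends — this is where I would cite the relevant structural result on fibered ribbon disks from~\cite{MeiZup_disks} or argue directly that the disk exterior fibers over $S^1$.

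The main obstacle I expect is the simultaneous control of three conditions on $K$: that it be hyperbolic, that it be fibered, and that the invertible concordance to $Q$ actually exist with the disks remaining fibered and ribbon. Hyperbolicity is the delicate one, since $Q$ itself is a connected sum (hence very much not hyperbolic), and one must verify that the source of the invertible concordance genuinely escapes the satellite/torus world — this likely requires either an explicit hyperbolic example (perhaps related to Long's $\mathbf{10_{123}}$, suitably generalized) verified by computer, or a general position argument showing a generic band-attachment yields a hyperbolic knot. Producing \emph{infinitely many} such $K$ then requires only that one parameter (e.g.\ the torus-knot index $k$, which already changes $Q$) can be varied while preserving hyperbolicity of the source, which should follow once a single base case is in hand by a Dehn-filling / geometric-limit argument or again by direct computation.
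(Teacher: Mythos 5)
Your high-level strategy is exactly the paper's: take $Q = Q_{2m+1,2}$, which by Theorem~\ref{thm:main_IRB} (plus~\cite[Theorem~1.6]{MeiZup_disks}) bounds infinitely many fibered, ribbon disks pairwise LK-nonisotopic rel.\ boundary, and pull these back through an invertible, fibered, ribbon concordance via Proposition~\ref{prop:invert}. But the proposal stops at the point where the real work begins, and the routes you sketch for producing the source knot $K$ have genuine problems. First, your ``cleanest route'' of taking $Q \# R'$ hyperbolic is impossible: a nontrivial connected sum is never hyperbolic, since the swallow-follow torus is essential. Second, the claim that ``a ribbon concordance built from adding a single (suitably chosen) trivial band is invertible'' is unsubstantiated and not true in general; invertibility is a strong condition and must be exhibited. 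The paper's proof is an explicit construction: a specific knot $K$ with a genus-four Seifert surface $F$ obtained by Stallings plumbing of two square-knot fiber surfaces (which proves fiberedness), hyperbolicity verified by SnapPy, and \emph{two} derivative unlinks $L_1, L_2 \subset F$ whose surgeries give ribbon concordances $C_1, C_2$ to $Q_{3,2}$; invertibility is proved by observing that $L_1$ and $L_2$ are geometrically dual on $F$, so the Heegaard splitting of $M_1 \cup_F \overline{M_2}$ is twice-stabilized and the composite concordance is a product. Fiberedness of the concordance (hence of the composite disks) comes from~\cite[Theorem~1.9]{Mil_21} because the bands lie on the fiber surface. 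None of this is replaceable by a genericity argument as you propose, at least not without substantial new work.

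Two further gaps: (i) the corollary asserts nonisotopy modulo local knotting \emph{without} the rel.\ boundary condition, whereas Proposition~\ref{prop:invert} only delivers the rel.\ boundary version; the paper bridges this using the finiteness of $\textsc{Sym}(K)$ for hyperbolic $K$ to extract an infinite subcollection that is pairwise LK-nonisotopic absolutely. Your proposal never addresses this. (ii) Producing \emph{infinitely many} such $K$ is not just a matter of ``varying a parameter'': the paper varies the plumbed summand to $Q_{2m+1,2}$, realizes the resulting knots $K_m$ as Dehn fillings of a fixed hyperbolic link $K_1 \cup \alpha_1 \cup \alpha_2$, and invokes Thurston's hyperbolic Dehn filling to conclude all but finitely many $K_m$ are hyperbolic --- which is the ``Dehn-filling argument'' you gesture at, but it requires the explicit surgery description to run.
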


\begin{proof}
	We first give a single example of a knot satisfying the conclusions of the corollary, before indicating at the end of the proof how to generalize the construction to yield an infinite family.

\begin{figure}[h!]
	\centering
	\includegraphics[width=.7\linewidth]{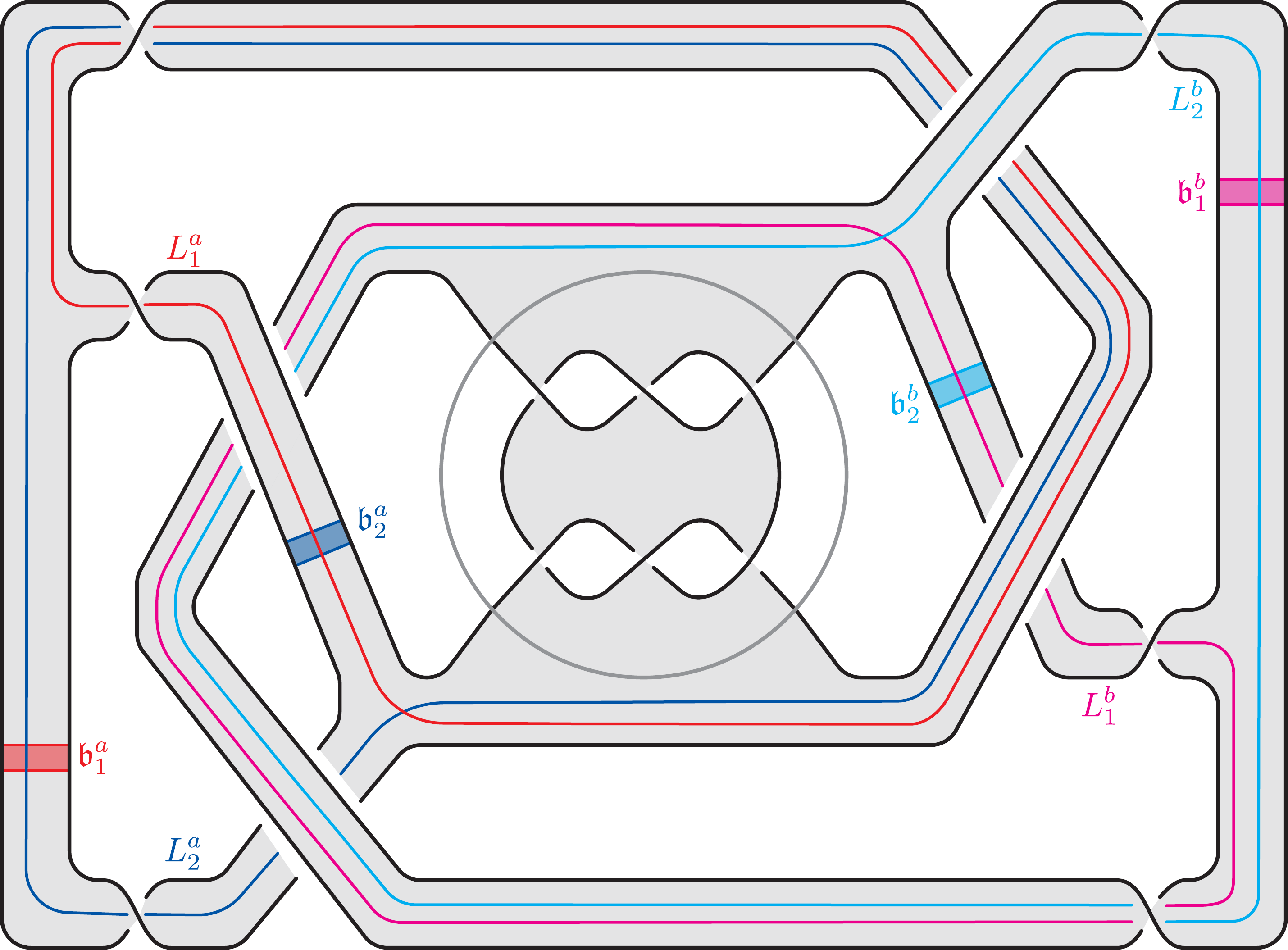}
	\caption{A fibered, hyperbolic knot bounding infinitely many fibered, ribbon disks that are pairwise nonisotopic modulo local knotting. }
	\label{fig:hyp_fib}
\end{figure}

	Let $K$ be the knot shown in Figure~\ref{fig:hyp_fib} together with a grey genus-four Seifert surface $F$. 
	Our first task is to establish that $K$ is fibered with fiber-surface $F$: we will do so by showing that $F$ can be obtained by plumbing of two fiber surfaces. 
Consider the region of $K$ inside the grey circle; this 2--strand tangle $T$ is also shown in Figure~\ref{fig:plumb3}.
	If we modify $K$  and $F$ by replacing $T$ with the 2--strand trivial tangle $T'$ and corresponding surface shown in Figure~\ref{fig:plumb1}, 
 the result is a square knot $K$' with a genus-two Seifert surface $F'$,  which since it is genus-two must be the fiber surface for $K'$. 
	Viewed in the other direction, the pair $(K,F)$ is obtained from $(K',F')$ by a local modification changing Figure~\ref{fig:plumb1} to Figure~\ref{fig:plumb3}. 
	This modification can be  realized by taking the square knot $K''$ shown with its fiber-surface $F''$ in Figure~\ref{fig:plumb2} and plumbing this surface onto the surface bounded by $T'$ in Figure~\ref{fig:plumb1}.
	The plumbing is accomplished by gluing the two surfaces along the indicated green quadrilaterals; see Stallings~\cite{stallings} for details.
	Since $F'$ is a fiber-surface for the square knot $K'$ and $F''$ is a fiber-surface for the square knot $K''$, it follows that $F$ is a fiber-surface for $K$~\cite[Theorem~1]{stallings}.
	This establishes that $K$ is fibered with fiber-surface $F$.

\begin{figure}[h!]
	\begin{subfigure}{.25\textwidth}
		\centering
		\includegraphics[width=1.1\linewidth]{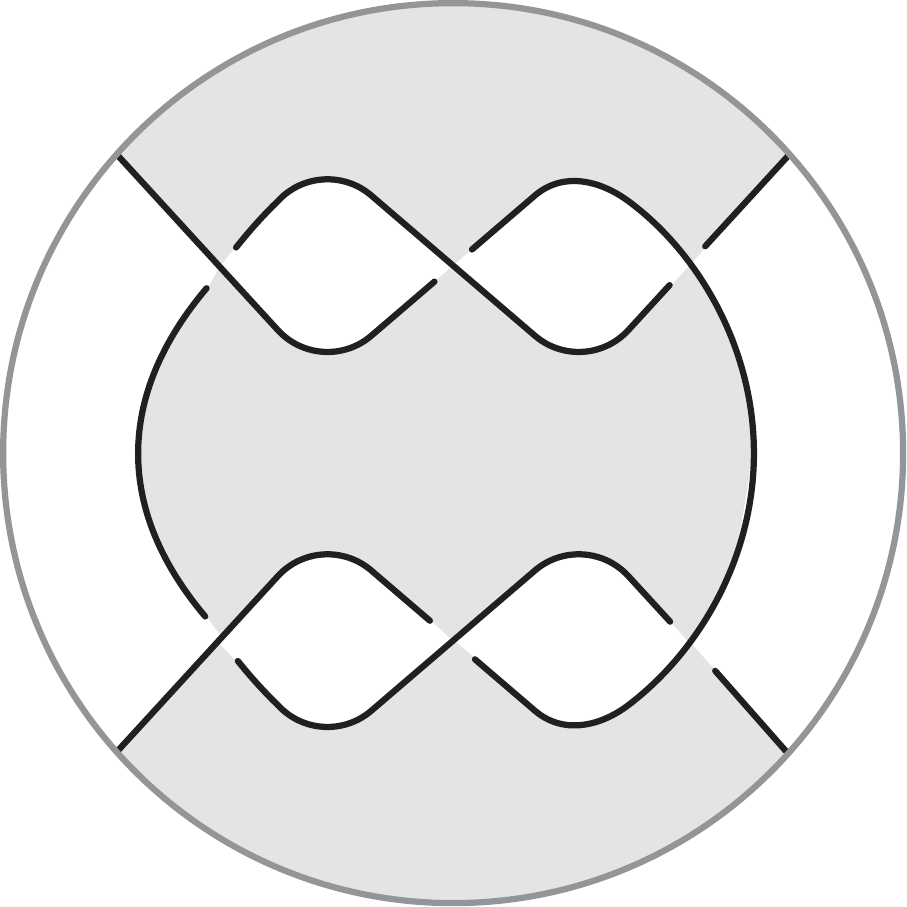}
		\caption{The tangle $T$ and a portion of the surface $F$}
		\label{fig:plumb3}
	\end{subfigure}%
	\hspace{1cm}
	\begin{subfigure}{.25\textwidth}
		\centering
		\includegraphics[width=1.1\linewidth]{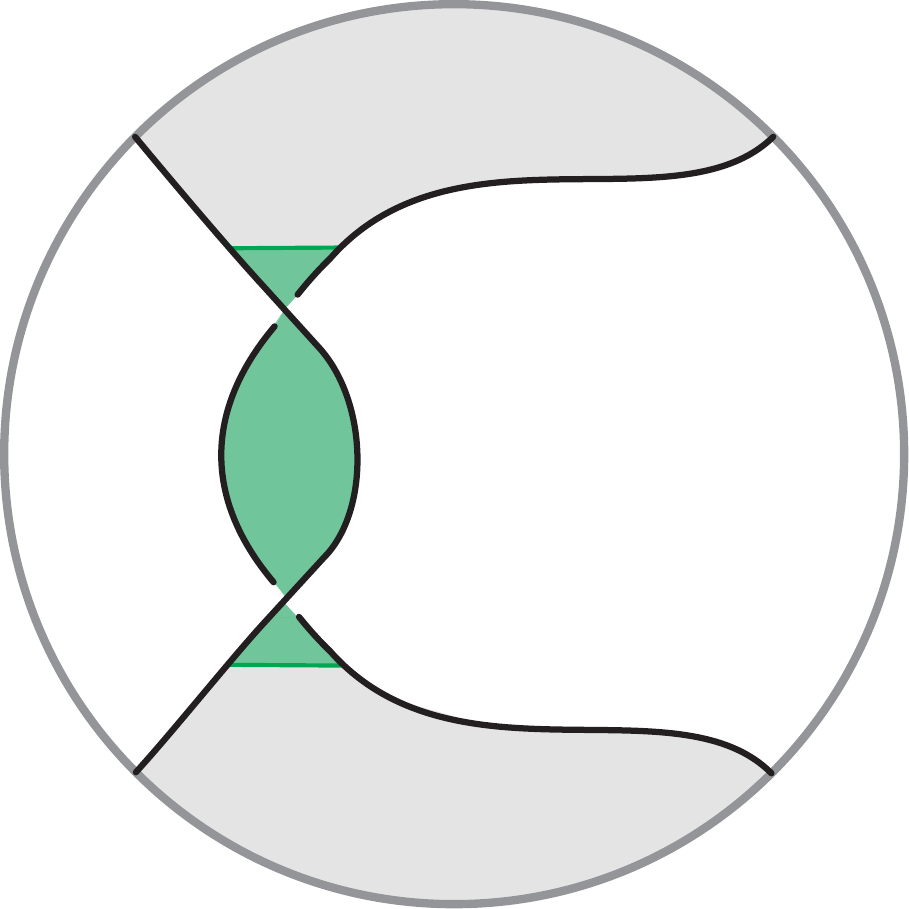}
		\caption{The tangle $T'$ and a portion of the surface $F'$}
		\label{fig:plumb1}
	\end{subfigure}%
	\hspace{1cm}
	\begin{subfigure}{.25\textwidth}
		\centering
		\includegraphics[width=1.1\linewidth]{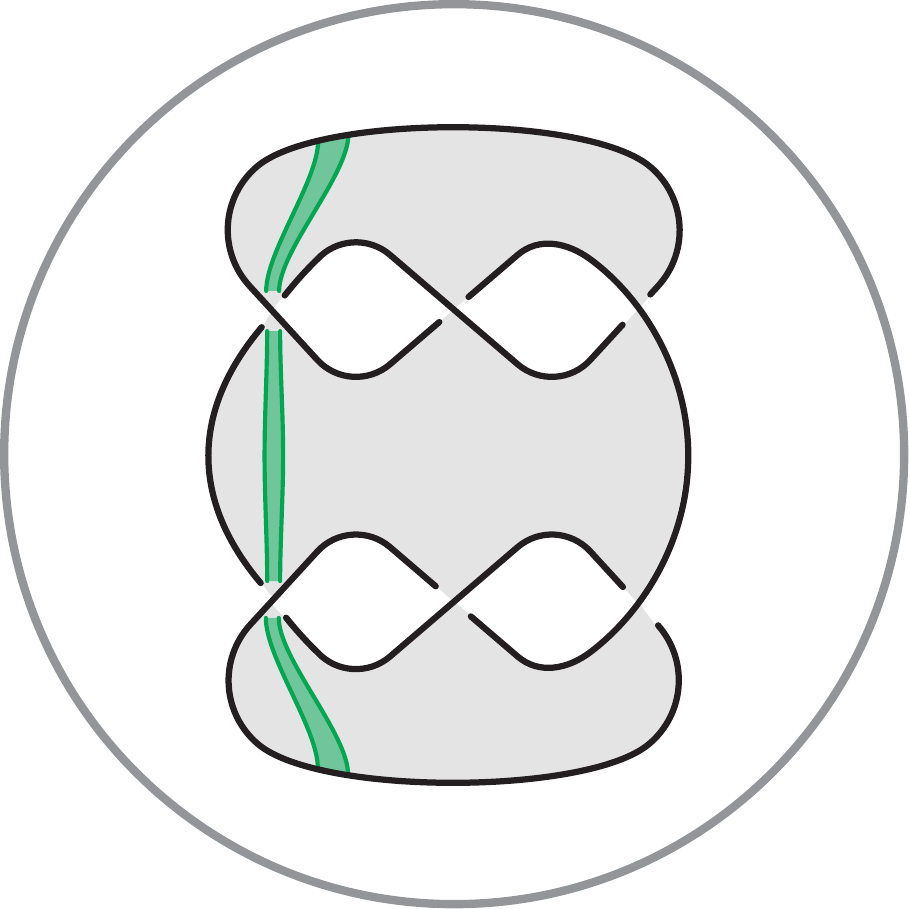}
		\caption{The surface $F''$ \\ \ }
		\label{fig:plumb2}
	\end{subfigure}%
	\caption{The surface $F$ is obtained by plumbing the surface $F''$ onto the surface $F'$ along the green quadrilateral, which changes the tangle $T'$ to the tangle $T$.}
	\label{fig:plumbing}
\end{figure}
	
	Using SnapPy~\cite{SnapPy} in SageMath~\cite{sage},  we verify that $K$ is hyperbolic using the command	\verb|N.verify_hyperbolicity()|, where $N = S^3\setminus K$.

	Let $L_1=L_1^a \cup L_1^b$ be the link on $F$ consisting of the red and magenta curves  and let $L_2=L_2^a \cup L_2^b$ be the link on $F$ consisting of the blue and cyan curves.
	Note that $L_i$ is a 2--component unlink that is 0--framed by $F$,  and so there is a concordance $C_i$  defined by surgering $F$ along the two curves of $L_i$.
	One can check that $K$ can be slid over $L_i$ to give $Q = Q_{3,2}$,  and so $C_i$ is a concordance from $K$ to $Q$.
	The result of surgering $F$ along $L_i$ is a genus-two Seifert surface $F'_i$  for $Q$, hence a fiber-surface for $Q$.
	Let $M_i$ denote the trace of this surgery, so $M_i$ is a compression body with
	$$\partial M_i = F \cup C_i \cup \overline F_i'.$$
	The concordance $C_i$ is also described by the bands $\frak{b}_i=\frak{b}_i^a \cup \frak{b}_i^b$, together with capping off $L_i$ with the standard disks for the unlink, since resolving $\frak b_i$ cuts $F$ into a genus-two surface with boundary the split union $Q\sqcup L_i$.
	Since the bands $\frak b_i$ are contained in $F$, they can be perturbed to be transverse to the fibration of $K$.
	It follows from~\cite[Theorem~1.9]{Mil_21} that the ribbon concordance $C_i$ is fibered by the compression body $M_i$.
	Hence, $C_1 \cup D_{c/d}$ is a fibered, homotopy ribbon disk for $Q$ whenever $c/d \in \Q$ has $c$ even.
	
	We remark in passing that it is possible to directly verify that $C_i$ is invertible with $C_1' = C_2$ using the bands $\frak b_1$ and $\frak b_2$ and the band calculus of~\cite{Swenton}, following the technique of~\cite[Section~4]{LivMei_15}, but we will give another approach here.
	Let $M = M_1\cup_F \overline M_2$, so $M$ is a 3--manifold with
	$$\partial M = F_2'\cup C_2\cup\overline C_1\cup \overline F_1',$$
	and $F\subset M$ is a Heegaard surface, splitting $M$ into the compression bodies $M_1$ and $M_2$, with Heegaard diagram $(F; L_1, L_2)$.
	We see that $L_1$ and $L_2$ are dual on $F$, so this Heegaard splitting of $M$ is twice-stabilized.
	It follows that $M\cong F' \times I$  and  $C_1\cup \overline C_2 \cong Q\times I$.

	Proposition~\ref{prop:invert} now  combines with Theorem~\ref{thm:main_IRB} to imply that $C_1 \cup D_{c/d}$ contains an infinite collection of fibered,  homotopy-ribbon disks for $K$ that are pairwise LK-nonisotopic rel. boundary.
	Since $C_1$ is ribbon and each $D_{2/(2m+1)}$ is ribbon~\cite[Theorem 1.6]{MeiZup_disks},  so is each $C_1 \cup D_{2/(2m+1)}$.
	Since $K$ is hyperbolic,  $\textsc{Sym}(K)$ is finite~\cite[Theorem~10.5.3]{Kaw_96},  and so there is an infinite subcollection of these fibered, ribbon disks for $K$ that are pairwise LK-nonisotopic.

To generalize $K_1:=K$ to an infinite family $\{K_m\}_{m\in\N}$ of knots, use $K_m'' := Q_{2m+1,2}$ (along with its fiber-surface $F''$) instead of $K_1'':=K'' = Q_{3,2}$, which is shown in Figure~\ref{fig:plumb2}, in the plumbing operation described at the beginning of the proof.
The result is that the two 3--twist regions of $T$ become two $(2m+1)$--twist regions. 
The arguments given above show that $K_m$ has a Seifert surface obtained by plumbing the fiber-surface for $Q_{2m+1,2}$ to that of $Q_{3,2}$, and hence is fibered,  and also that it has an invertible, fibered, ribbon concordance to $Q_{2m+1,2}$.
This will imply as above that $K_m$ has infinitely many fibered, ribbon disks that are pairwise LK-nonisotopic,  so long as $K_m$ is hyperbolic.
For this, consider the link $L = K_1\cup\alpha_1\cup\alpha_2$, where $\alpha_1$ and $\alpha_2$ are unknots linking the upper and lower 3--twist regions of $T\subset K_1$, respectively.
The knot $K_m$ is obtained  from $K_1$ by performing $1/(m-1)$--framed Dehn surgery on $\alpha_1$ and $-1/(m-1)$--framed Dehn surgery on $\alpha_2$.
Using SnapPy in SageMath as before, we can verify that $L$ is hyperbolic, so at most finitely many values of $m$ will yield nonhyperbolic fillings~\cite{Thu_82}.
In other words,  infinitely many of the $K_m$ will be hyperbolic, as desired.
\end{proof}

\begin{corollary}
\label{coro:pseudoA}
	There exist infinitely many distinct pseudo-Anosov mapping classes each of which has infinitely many distinct handlebody extensions. 
\end{corollary}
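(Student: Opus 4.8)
The plan is to reinterpret Corollary~\ref{coro:hyp_fib} in terms of surface homeomorphisms, using the dictionary (essentially due to Casson and Gordon~\cite{CasGor_83}) relating fibered homotopy-ribbon disks for a fibered knot $K$ to handlebody extensions of its closed monodromy. If $K$ is fibered with fiber $F$, then capping off a collar of $\partial F$ yields a closed surface $\widehat F$ of genus $g=g(F)$ bounding a genus-$g$ handlebody, and the monodromy of $K$ induces the closed monodromy $\widehat\varphi\colon\widehat F\to\widehat F$, well-defined up to conjugacy (the fibration of $Y_K$ with fiber $\widehat F$ being unique up to isotopy). For a fibered homotopy-ribbon disk $D$ for $K$, the exterior $Z_D=B^4\setminus\nu(D)$ fibers over $S^1$ with fiber a genus-$g$ handlebody $H$, and the induced fibration on $\partial Z_D=Y_K$ is the standard one; thus the monodromy $\Phi_D$ of $Z_D$ restricts on $\partial H=\widehat F$ to $\widehat\varphi$, exhibiting $\Phi_D$ as a handlebody extension of $\widehat\varphi$. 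Moreover, if $D$ and $D'$ are LK-isotopic then, by~\cite[Proposition~2.1]{MeiZup_disks}, some diffeomorphism of $B^4$ carries $D'$ to $D$ and hence $Z_{D'}\cong Z_D$; since these exteriors carry essentially unique handlebody fibrations restricting on the boundary to the standard fibration of $Y_K$, this forces $\Phi_{D'}$ and $\Phi_D$ to be conjugate, i.e.\ to represent the same handlebody extension of $\widehat\varphi$. Therefore a fibered knot bounding infinitely many pairwise LK-nonisotopic fibered homotopy-ribbon disks has closed monodromy with infinitely many distinct handlebody extensions.

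Next I would apply this to the knots $K_m$ built in the proof of Corollary~\ref{coro:hyp_fib}: each is fibered and bounds infinitely many fibered homotopy-ribbon (in fact ribbon) disks that are pairwise LK-nonisotopic, so its closed monodromy $\widehat\varphi_m$ admits infinitely many distinct handlebody extensions. The fiber $F_m$ of $K_m$ is a plumbing of the fiber surface of $Q_{2m+1,2}$ onto that of $Q_{3,2}$, so $g(F_m)=2m+2$; in particular the surfaces $\widehat F_m$ have pairwise distinct genera, and hence the classes $\widehat\varphi_m$ are pairwise distinct.

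It then remains to check that $\widehat\varphi_m$ is pseudo-Anosov for infinitely many $m$. Since $\widehat F_m$ has genus $\ge 4$, Thurston's hyperbolization theorem for fibered $3$--manifolds says $\widehat\varphi_m$ is pseudo-Anosov exactly when its mapping torus, the $0$--surgery $Y_{K_m}$, is hyperbolic. With $L=K_1\cup\alpha_1\cup\alpha_2$ as in the proof of Corollary~\ref{coro:hyp_fib}, let $N=Y_{K_1}\setminus\nu(\alpha_1\cup\alpha_2)$ be the resulting $2$--cusped manifold; using SnapPy in SageMath as before, verify that $N$ is hyperbolic. Since $Y_{K_m}$ is obtained from $N$ by Dehn filling the two cusps along slopes that are pairwise distinct as $m$ varies, Thurston's hyperbolic Dehn surgery theorem gives that $Y_{K_m}$ is hyperbolic, hence $\widehat\varphi_m$ pseudo-Anosov, for all but finitely many $m$. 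Together with the previous paragraph this produces infinitely many pairwise distinct pseudo-Anosov classes each with infinitely many handlebody extensions, improving the example of Long~\cite{Long}, namely the closed monodromy of $\mathbf{10_{123}}$, which is pseudo-Anosov with two handlebody extensions.

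The step I expect to require the most care is the correspondence in the first paragraph: pinning down the equivalence relation on handlebody extensions so that ``LK-isotopic disks'' corresponds precisely to ``equivalent extensions'', together with the claim that the handlebody fibration of a disk exterior is unique enough (up to isotopy, given its restriction to the boundary) that a diffeomorphism of exteriors conjugates the monodromies --- this is where one leans on uniqueness of fibrations and on~\cite[Proposition~2.1]{MeiZup_disks}. By comparison, the hyperbolicity of $N$ is a finite computation, and $g(F_m)=2m+2$ is immediate from the plumbing description and additivity of genus under plumbing.
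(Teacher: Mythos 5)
There is a genuine logical gap at the heart of your first paragraph. You establish (modulo the uniqueness-of-fibrations issues you flag) that the assignment $D\mapsto\Phi_D$ is \emph{well defined} on LK-isotopy classes: if $D$ and $D'$ are LK-isotopic, then $\Phi_D$ and $\Phi_{D'}$ represent the same handlebody extension. But you then conclude that infinitely many pairwise LK-nonisotopic disks yield infinitely many distinct extensions, which requires the \emph{converse}: that the assignment is injective on LK-classes, i.e.\ that disks inducing equivalent handlebody extensions must be LK-isotopic. A well-defined map from an infinite set can perfectly well have finite image, so the implication you prove does not yield the corollary. The missing direction is exactly the delicate half of the Casson--Gordon dictionary (equivalent extensions give fiber-preserving diffeomorphisms of the disk exteriors carrying surgery torus to surgery torus, which one must then extend over $B^4$), and it is nowhere addressed; your closing paragraph acknowledges that the correspondence needs to be pinned down, but the body of the argument only supplies the direction that is not used.

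The paper avoids this entirely by distinguishing the handlebody extensions directly, without reference to LK-isotopy of disks. It exhibits the extensions via the links $\Ll'_{c/d}=L_1\cup\Ll_{c/d}$ on the closed fiber of $K_m$, with handlebody fibers $H'_{c/d}=M_1\cup H_{c/d}$, and shows these handlebodies are pairwise non-diffeomorphic rel.\ boundary: if $H'_{c/d}$ and $H'_{c'/d'}$ were diffeomorphic rel.\ boundary, then gluing on the compression body $M_2$ (the invertibility trick from Proposition~\ref{prop:invert}) would make $H_{c/d}$ and $H_{c'/d'}$ diffeomorphic rel.\ boundary, contradicting the classification of handlebody extensions for generalized square knots in \cite[Theorem~1.7]{MeiZup_Qpq} unless $c/d=c'/d'$. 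If you want to salvage your route, you would need to prove the injectivity statement above; otherwise, replace the first paragraph with a direct comparison of the handlebody fibers along these lines. Your remaining steps --- distinctness of the $\widehat\varphi_m$ via the genus count $g(F_m)=2m+2$, and pseudo-Anosovness via hyperbolicity of $S^3_0(K_m)$ obtained by Dehn filling the drilled manifold --- are fine and consistent with the paper.
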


\begin{proof}
Let $\widehat\varphi_m$ denote the closed monodromy of the knot $K_m$ from the proof of Corollary~\ref{coro:hyp_fib}.
As was done in that proof, we can use SnapPy in SageMath to verify that $S^3_0(K_m)$ is hyperbolic and hence $\widehat\varphi_m$ is pseudo-Anosov for infinitely many values of $m$.
For the rest of the proof, fix some such value of $m$.

Let $\Ll_{c/d}' = L_1\cup \Ll_{c/d}$, where the $\Ll_{c/d}\subset F_1'$ is the link introduced in Subection~\ref{subsec:kernels}, but now we view $\Ll_{c/d}$ as lying on the closed fiber $\widehat F$ for $K_m$ in the obvious way.
	The link $\Ll_{c/d}'$ defines a handlebody extension of the closed monodromy of $K_m$, with the handlebody fiber given by $H'_{c/d}:=M_1\cup H_{c/d}$, where $H_{c/d}$ is the handlebody fiber of the disk $D_{c/d}$ for $Q_{2m+1,2}$.
	As in the proof of Proposition~\ref{prop:invert}, if $H_{c/d}'$ and $H_{c'/d'}'$ are diffeomorphic rel. boundary, then so are $M_2\cup H_{c/d}'$ and $M_2\cup H_{c'/d'}'$, but these are simply $H_{c/d}$ and $H_{c'/d'}$, by the earlier observed invertibility.
	By~\cite[Theorem~1.7]{MeiZup_Qpq}, these latter handlebodies are diffeomorphic rel. boundary if and only if $c'/d' = c/d$.
\end{proof}

\subsection{Satellite operations}
\label{subsec:satellite}
\ 

In this section, we obtain more  examples of knots bounding infinitely many disks that are pairwise LK-nonisotopic rel. boundary by using the satellite construction.
In particular,  the `satellite the concordance' argument commonly used to show that $K \mapsto P(K)$ induces a well-defined map on knots modulo concordance also gives a recipe for combining slice disks for each of $K$ and $P(U)$ into a slice disk for $P(K)$  as follows. 

 \begin{definition}~\label{defn:satdisk}
 Let $P= P(U) \cup \eta$ be a \emph{pattern} -- i.e. an ordered 2--component link where $\eta$ is unknotted and $P(U)$ is described by a map $i_P \colon S^1 \to S^1 \times D^2 \cong S^3 \setminus \nu(\eta)$,  where we use the Seifert framing on $\eta$ in the latter identification.  
Let $K$ be a knot with slice disk $D$,  let $C_D$ be the concordance from $K$ to $U$ obtained by puncturing $D$,  and let 
\[ i_{\overline{\nu}(C_D)} \colon I \times S^1 \times D^2 \cong C_D \times D^2 \to S^3 \times I\]
denote an embedding of the tubular neighborhood of $C_D$,  where we require $i_{\overline{\nu}(C_D)}$ to restrict to the Seifert framings on the tubular neighborhoods of $P(K)$ and $P(U)$.
The \emph{satellite concordance $P(C_D)$} from $P(K)$ to $P(U)$ is the image of the following composition:
\[ I \times S^1 \xrightarrow{ \id_I \times i_P} I \times S^1 \times D^2 \xrightarrow{ i_{\overline{\nu}(C_D)}} S^3 \times I.\]

If $P(U)$ has a slice disk $\Delta$,    the  \emph{satellite slice disk} $P_{\Delta}(D)$ for $P(K)$ is
\[ P_{\Delta}(D):= P(C_{D}) \cup_{P(U)} \Delta \subset (S^3 \times I) \cup B^4 \cong B^4. \]
\end{definition}

We note for later that the exterior $E_{P_{\Delta}(D)}$ is obtained by gluing the exteriors $E_{D}$ and $E_{\Delta}$ together by identifying the surgery solid torus in $\partial E_D=S^3_0(K)$ with a neighborhood of $\eta$ in $ \partial E_{\Delta}=S^3_0(P(U))$. 
We also remark that Definition~\ref{defn:satdisk} generalizes the definition of `satellite disk' given in~\cite{GuthEtc}, where the additional assumption that $P(U)=U$ is included.

We now give some conditions that ensure that distinct disks for $P(U)$ or for $K$ lead to distinct disks for $P(K$). 
The following result comes from \cite{GuthEtc}  and essentially results from considering the Seifert-Van Kampen theorem applied to  $E_{P(K)}=E_K \cup_{T^2} E_P$ and $E_{P_{\Delta}(D)}= E_D \cup_{S^1 \times D^2}  E_\Delta$.

\begin{proposition}\label{prop:KtoP(K)}
Let $P=P(U) \cup \eta$ be a pattern.  Suppose that $\Delta$ is a slice disk for $P(U)$ such that the image of the class of $\eta$ in $\pi(\Delta)$ is of infinite order. 
Let $D_1$ and $D_2$ be  slice disks for $K$ such that 
 \[\ker \left( \pi(K) \to \pi(D_1)\right) \neq \ker \left( \pi(K) \to \pi(D_2)\right).\]
Then  $P_{\Delta}(D_1)$ and $P_{\Delta}(D_2)$ are slice disks for $P(K)$  such that 
\[\ker \left( \pi(P(K)) \to \pi(P_{\Delta}(D_1))\right) \neq
\ker \left( \pi(P(K)) \to \pi(P_{\Delta}(D_2))\right).\]
\end{proposition}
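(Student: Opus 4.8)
The plan is to compute both fundamental groups via Seifert--Van Kampen using the decomposition noted immediately before the proposition, namely $E_{P_\Delta(D_i)} = E_{D_i}\cup_{S^1\times D^2} E_\Delta$, and to show that the kernel of $\pi(P(K))\to\pi(P_\Delta(D_i))$ is controlled by the kernel of $\pi(K)\to\pi(D_i)$ under the inclusion-induced map $\pi(K)\to\pi(P(K))$. First I would set up notation: write $\pi(P(K)) = \pi(E_K)*_{\pi(T^2)}\pi(E_P)$, where $T^2$ is the boundary torus separating the satellite companion region from the pattern region, and similarly $\pi(P_\Delta(D_i)) = \pi(E_{D_i})*_{\pi(S^1\times D^2)}\pi(E_\Delta)$. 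Here the amalgamating subgroup $\pi_1(S^1\times D^2)\cong\Z$ is generated by (the image of) $\eta$, and the hypothesis that $\eta$ has infinite order in $\pi(\Delta)$ is precisely what guarantees that the inclusion $\Z\hookrightarrow\pi(E_\Delta)=\pi(\Delta)$ is injective, so the amalgamated product is ``honest'' and the factor $\pi(E_\Delta)$ injects. (On the companion side, the relevant inclusion of $\Z$—generated by the $0$-framed surgery dual, i.e. the meridian $\mu_J$ of $K$—into $\pi(E_{D_i})$ is the standard fact that a slice disk complement contains the meridian of $K$, which has infinite order; this is what makes $\pi(E_{D_i})$ inject into $\pi(P_\Delta(D_i))$ as well.)

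Next I would track the map on $\pi_1$ induced by $E_{P(K)}\to E_{P_\Delta(D_i)}$. Since this map is built by filling in the $0$-surgery solid torus on the companion side with $E_{D_i}$ and leaving $E_\Delta$ fixed on the pattern side, Seifert--Van Kampen gives
\[
\pi(P_\Delta(D_i)) \;\cong\; \pi(P(K))\,\big/\,\big\langle\!\big\langle\, \iota_{K,P(K)}\big(\ker(\pi(K)\to\pi(D_i))\big)\big\rangle\!\big\rangle,
\]
where $\iota_{K,P(K)}\colon\pi(K)\to\pi(P(K))$ is the inclusion of the companion knot group as the first free factor and $\langle\!\langle\cdot\rangle\!\rangle$ denotes normal closure in $\pi(P(K))$. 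In other words, the kernel of $\pi(P(K))\to\pi(P_\Delta(D_i))$ is exactly the normal closure, inside $\pi(P(K))$, of the image of $\ker(\pi(K)\to\pi(D_i))$. I would justify this by a van Kampen diagram chase: a loop in $E_{P(K)}$ that dies in $E_{P_\Delta(D_i)}$ can be homotoped, after subdividing along $T^2$, so that the only new relations it uses are those coming from the $2$- and $3$-handles attached to $E_K$ inside $E_{D_i}$, and those handles attach along curves whose classes normally generate $\ker(\pi(K)\to\pi(D_i))$ (compare Proposition~\ref{prop:generator} for the analogous bookkeeping in the square-knot setting).

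Finally, to conclude I would argue that distinct kernels downstairs force distinct normal closures in $\pi(P(K))$. Suppose for contradiction that $\langle\!\langle \iota_{K,P(K)}(N_1)\rangle\!\rangle = \langle\!\langle \iota_{K,P(K)}(N_2)\rangle\!\rangle$ where $N_i = \ker(\pi(K)\to\pi(D_i))$; I must deduce $N_1 = N_2$, contradicting the hypothesis. The key point is that the retraction of $\pi(P(K)) = \pi(E_K)*_{\Z}\pi(E_P)$ onto (a quotient detecting) the companion factor, together with the observation that $N_i$ is \emph{already normal} in $\pi(K)$ (it is a kernel of a homomorphism out of $\pi(K)$), lets me pull the equality of normal closures back down. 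Concretely, since $N_i\lhd\pi(K)$ and $\pi(K)$ is a free factor, any element of $\langle\!\langle \iota_{K,P(K)}(N_i)\rangle\!\rangle$ that lies in the image of $\pi(K)$ already lies in $N_i$; applying this to $N_1\subseteq\langle\!\langle\iota(N_1)\rangle\!\rangle = \langle\!\langle\iota(N_2)\rangle\!\rangle$ gives $N_1\subseteq N_2$, and symmetrically $N_2\subseteq N_1$. \textbf{The main obstacle} is making this last ``pull-back'' step rigorous: I need the inclusion of $\pi(K)$ into the amalgamated product $\pi(P(K))$ to be not just injective but ``malnormal enough'' that intersecting the normal closure of a subset of $\pi(K)$ back with $\pi(K)$ returns the original normal (in $\pi(K)$) subgroup. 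This should follow from the normal-form / Britton's-lemma machinery for amalgamated free products, using that the edge group $\Z$ injects into both vertex groups (the infinite-order hypotheses), but one must be careful that the meridian $\mu_J\in\pi(K)$ along which the amalgamation happens does not itself lie in $N_i$—which is true since $\mu_J$ maps to a meridian, hence a nontrivial (indeed infinite-order) element, of $\pi(D_i)$. Once that normal-form lemma is in hand, the proposition follows, and in fact the same argument shows the two kernels differ whether or not $P(U)$ is slice or ribbon, which is what is used for Corollaries~\ref{coro:non0winding} and~\ref{coro:trefoil_pattern}.
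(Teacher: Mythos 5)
Your structural setup is the right one, and you have correctly isolated the point the paper leans on: the proof in the paper simply invokes \cite[Proposition~2.3]{GuthEtc} after observing that the only thing that argument needs is injectivity of the four edge-group inclusions $\pi_1(T^2)\to\pi_1(E_K)$, $\pi_1(T^2)\to\pi_1(E_P)$, $\pi_1(S^1\times D^2)\to\pi_1(E_{D_i})$, $\pi_1(S^1\times D^2)\to\pi_1(E_\Delta)$, and that the last of these is exactly the infinite-order hypothesis on $[\eta]$ (while the third holds because the core of the surgery torus maps to a meridian of $K$, which has infinite order in $H_1(E_{D_i})$). However, the way you finish has two genuine problems. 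First, the claimed identity $\pi(P_\Delta(D_i))\cong\pi(P(K))/\langle\!\langle\iota_{K,P(K)}(N_i)\rangle\!\rangle$ is false in general: the map $\pi(P(K))\to\pi(P_\Delta(D_i))$ restricts on the pattern vertex group to $\pi(E_P)\to\pi(E_\Delta)$, which factors through $\pi(P(U))\to\pi(\Delta)$ and typically has a large kernel (certainly whenever $P(U)$ is a nontrivial knot, which is the case in Corollary~\ref{coro:trefoil_pattern}); moreover the map need not even be surjective unless $D_i$ and $\Delta$ are homotopy-ribbon, which is not assumed. So equality of the two kernels upstairs does not reduce to equality of the two normal closures. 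Second, the ``pull-back'' claim $\langle\!\langle\iota(N)\rangle\!\rangle\cap\pi(E_K)=N$, which you flag as the main obstacle, does not follow from Britton's lemma: quotienting one vertex group of an amalgam by a normal subgroup can destroy injectivity of the edge group, after which the quotient is no longer an honest amalgamated product and vertex groups need not inject.

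Both problems evaporate if you run the argument in the opposite direction, which is what the cited proof does. Since $\pi_1(S^1\times D^2)\cong\Z$ injects into both $\pi(E_{D_i})$ and $\pi(E_\Delta)$, the normal form theorem gives that the vertex group $\pi(E_{D_i})$ injects into $\pi(P_\Delta(D_i))$; likewise $\pi(E_K)$ injects into $\pi(P(K))$ because $\pi_1(T^2)$ injects into both $\pi(E_K)$ and $\pi(E_P)$. Then, since $\pi(E_K)\to\pi(P_\Delta(D_i))$ factors through $\pi(E_{D_i})$,
\[
\ker\left(\pi(P(K))\to\pi(P_\Delta(D_i))\right)\cap\pi(E_K)
=\ker\left(\pi(E_K)\to\pi(E_{D_i})\right)=N_i,
\]
so equal kernels upstairs would force $N_1=N_2$, contradicting the hypothesis. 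No description of the full kernel, and no malnormality statement for $\pi(P(K))$, is required.
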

\begin{proof}
As observed in~\cite{GuthEtc},  it suffices to show that the inclusion induced maps
\[\pi_1(T^2) \to \pi_1(E_K),  \,  \pi_1(T^2) \to \pi_1(E_P), \, \pi_1(S^1 \times D^2) \to \pi_1(E_D), \,
\text{ and } \pi_1(S^1 \times D^2) \to \pi_1(E_\Delta)
\]
are all injective.  
A careful examination of the proof of \cite[Proposition 2.3]{GuthEtc} shows that,  while the authors assume that the winding number of $P$ is nonzero,  in fact the argument goes through under the weaker assumption that $\eta$ represents an infinite order class in $\pi(\Delta)$.
\end{proof}

Note the condition on the class of $\eta$ is automatically satisfied when the winding number of $P$ is nonzero for homological reasons,  and so we obtain the following from our examples of Section~\ref{sec:twist-spun}.

\begin{corollary}
\label{coro:non0winding}
	Let $P= P(U)$ be a pattern with nonzero winding number such that $P(U)$ is slice (respectively, ribbon) and let $J$ be  a nontrivial knot.
	Then $P(J \#\overline J)$ has infinitely many slice disks (respectively, ribbon disks) that are pairwise nonisotopic rel. boundary modulo local knotting. 
\end{corollary}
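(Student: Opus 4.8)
The plan is to obtain this as a formal consequence of Theorem~\ref{thm:twist-spins}, Proposition~\ref{prop:KtoP(K)}, and Proposition~\ref{prop:kernelob}, with the winding-number hypothesis entering only to certify the infinite-order condition on the pattern curve. Write $P = P(U)\cup\eta$ as in Definition~\ref{defn:satdisk}, and fix a slice disk $\Delta$ for $P(U)$, chosen to be ribbon in the ribbon case. The first step is to confirm that $\Delta$ satisfies the hypothesis of Proposition~\ref{prop:KtoP(K)}: since $P$ has nonzero winding number $w$, we have $\lk(\eta,P(U)) = w \neq 0$, so under the abelianization $\pi(\Delta)\to H_1(B^4\setminus\nu(\Delta))\cong\Z$ the class $[\eta]$ maps to $w\neq 0$; in particular $[\eta]$ has infinite order in $\pi(\Delta)$. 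This is exactly the observation recorded in the remark preceding the corollary.

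Next I would feed in the twist-spun examples. The proof of Theorem~\ref{thm:twist-spins} establishes not merely that the $n$--twist-spun ribbon disks $\{D_n\}_{n\in\Z}$ for $J\#\overline J$ are pairwise non-LK-isotopic, but more precisely that the inclusion-induced kernels $\ker(\iota_n)=\ker\!\big(\pi(J\#\overline J)\to\pi(D_n)\big)$ are pairwise distinct in $\pi(J\#\overline J)$. Fixing $n\neq m$ and applying Proposition~\ref{prop:KtoP(K)} with $K=J\#\overline J$, the disk $\Delta$ above, and the pair $(D_1,D_2)=(D_n,D_m)$, we conclude that the satellite slice disks $P_\Delta(D_n)$ and $P_\Delta(D_m)$ for $P(J\#\overline J)$ have distinct inclusion-induced kernels in $\pi(P(J\#\overline J))$. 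Proposition~\ref{prop:kernelob} then yields that $P_\Delta(D_n)$ and $P_\Delta(D_m)$ are nonisotopic rel. boundary modulo local knotting. Since this holds for every pair $n\neq m$, the family $\{P_\Delta(D_n)\}_{n\in\Z}$ furnishes the desired infinite collection of slice disks for $P(J\#\overline J)$.

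For the parenthetical ribbon statement, I would verify that $P_\Delta(D_n)$ is ribbon whenever $D_n$ and $\Delta$ are. The disks $D_n$ are ribbon by construction (Section~\ref{sec:twist-spun}), so the punctured concordance $C_{D_n}$ of Definition~\ref{defn:satdisk} has no local maxima with respect to the $I$--coordinate; one then checks that the satellite concordance $P(C_{D_n})$ inherits this property, so that gluing on the ribbon disk $\Delta$ produces a ribbon disk $P_\Delta(D_n)$ for $P(J\#\overline J)$. This last check — that the satellite construction of Definition~\ref{defn:satdisk} carries ribbon disks to ribbon disks when $P(U)$ is ribbon — is the one point of the argument that is geometric rather than purely formal, though it is standard; it is the same fact that underlies the well-definedness of $K\mapsto P(K)$ on ribbon knots.

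Beyond that point I anticipate no real obstacle: the corollary is an assembly of results already established, and the sole role of the nonzero-winding-number hypothesis is to force $[\eta]$ to have infinite order in $\pi(\Delta)$, which is precisely what Proposition~\ref{prop:KtoP(K)} requires; the main thing to be careful about is to invoke it only there and not to overclaim.
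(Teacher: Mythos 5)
Your argument is correct and is essentially the paper's: the nonzero winding number forces $[\eta]$ to map to $w\neq 0$ under abelianization and hence to have infinite order in $\pi(\Delta)$, the proof of Theorem~\ref{thm:twist-spins} supplies the pairwise-distinct kernels $\ker(\iota_n)$, and Proposition~\ref{prop:KtoP(K)} together with Proposition~\ref{prop:kernelob} finishes the job. Your extra care in checking that the satellite construction preserves ribbonness is a point the paper leaves implicit, but it is standard and you handle it correctly.
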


We would also like to be able to show that $P(K)$ has many LK-nonisotopic disks whenever $P(U)$ does.
Since $E_{P(U)}$ is not a subset of $E_{P(K)}$, the above argument does not quite work on the level of the fundamental group; instead, we use the Alexander module. 

\begin{proposition}
Let $P= P(U) \cup \eta$ be a pattern.  Suppose that $\Delta_1$ and $\Delta_2$ are slice disks for $P(U)$ that have
\[ \ker\left( \mathcal{A}(K) \to \mathcal{A}(\Delta_1)\right) \neq \ker\left( \mathcal{A}(K) \to \mathcal{A}(\Delta_2)\right).\]
Then for any slice disk $D$ for $K$,  the slice disks $P_{\Delta_1}(D)$ and $P_{\Delta_2}(D)$ for $P(K)$ have
\[ \ker\left( \mathcal{A}(P(K)) \to \mathcal{A}(P_{\Delta_1}(D))\right) 
\neq  \ker\left( \mathcal{A}(P(K)) \to \mathcal{A}(P_{\Delta_2}(D))\right).\]
\end{proposition}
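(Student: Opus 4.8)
The plan is to mimic the fundamental-group argument of Proposition~\ref{prop:KtoP(K)}, but with the Alexander module replacing $\pi_1$ throughout. Recall that the Alexander module $\mathcal{A}(N)$ of a space $N$ with a surjection $\pi_1(N)\to\Z$ is $H_1$ of the infinite cyclic cover, a module over $\Lambda=\Z[t^{\pm 1}]$. Since the exterior $E_{P_{\Delta_i}(D)}$ decomposes as $E_D\cup_{S^1\times D^2}E_{\Delta_i}$ (as noted just before Proposition~\ref{prop:KtoP(K)}), and since the relevant abelianizations are compatible across the gluing, we get a Mayer--Vietoris sequence for the infinite cyclic covers. The key point is that the winding number of $\eta$ in $S^1\times D^2\subset E_D$ controls how the gluing torus lifts: the class of $\eta$ maps to $w\cdot[\text{core}]$ in $H_1$ of the base, where $w$ is the winding number of $P$.

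First I would set up the Mayer--Vietoris sequence for $\mathcal{A}(P_{\Delta_i}(D)) = H_1$ of the infinite cyclic cover of $E_{P_{\Delta_i}(D)}$, decomposed along the lift of $S^1\times D^2$. The gluing piece $S^1\times D^2$ has Alexander module $\Lambda$ (if $w\ne 0$, its cover is connected) or a countable sum of copies of $\Z$ with a shift action (if $w=0$, so $\Lambda/(t-1)\cong\Z$ appears — this degenerate case must be handled, but it is exactly the case covered by Proposition~\ref{coro:alternate}'s hypothesis being vacuous, or one restricts to $w\ne 0$). Then I would show that the natural map $\mathcal{A}(P(K))\to\mathcal{A}(P_{\Delta_i}(D))$, when restricted appropriately, detects the submodule $\ker(\mathcal{A}(P(U))\to\mathcal{A}(\Delta_i))$ as a ``sub-piece'': concretely, $\mathcal{A}(P(K))$ surjects onto a quotient built from $\mathcal{A}(P(U))$ via the change-of-rings induced by the pattern, and the kernel of the map to $\mathcal{A}(P_{\Delta_i}(D))$ contains a canonical copy of (a change-of-rings image of) $\ker(\mathcal{A}(K)\to\mathcal{A}(\Delta_i))$ together with contributions from $\mathcal{A}(E_D)$ that are \emph{independent of $i$}. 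Distinguishing the $\Delta_i$ then reduces to distinguishing these kernels, which is the hypothesis.

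The cleanest way to package this is probably: (1) the inclusion $E_{\Delta_i}\hookrightarrow E_{P_{\Delta_i}(D)}$ induces, on Alexander modules, a map whose source is $\mathcal{A}(\Delta_i)$ viewed over $\Lambda$ via the winding-number-$w$ substitution $t\mapsto t^{w}$ on the variable of $\mathcal{A}(P(U))$ — this is the standard satellite formula for Alexander modules; (2) similarly $\mathcal{A}(P(K))$ is assembled from $\mathcal{A}(K)$ and $\mathcal{A}(P) = \mathcal{A}(P(U))$ (localized/changed-of-rings) glued along $\mathcal{A}(T^2)=\Lambda$ — again the classical satellite Alexander module formula; (3) these two assemblies are compatible, so that $\ker(\mathcal{A}(P(K))\to\mathcal{A}(P_{\Delta_i}(D)))$ is the preimage under a fixed surjection of $\ker(\mathcal{A}(P(U))\to\mathcal{A}(\Delta_i))$ plus a fixed submodule; (4) if these kernels agreed for $i=1,2$, pulling back along the fixed surjection would force $\ker(\mathcal{A}(P(U))\to\mathcal{A}(\Delta_1))=\ker(\mathcal{A}(P(U))\to\mathcal{A}(\Delta_2))$, and then the same substitution-of-variables argument shows $\ker(\mathcal{A}(K)\to\mathcal{A}(\Delta_1))=\ker(\mathcal{A}(K)\to\mathcal{A}(\Delta_2))$ — wait, the roles are swapped: it is $\ker$ over $\mathcal{A}(P(U))$ that is hypothesized distinct, so the contradiction is immediate. (One must be careful about injectivity of the gluing maps on Alexander modules, which holds because $\mathcal{A}(T^2)=\Lambda$ is free and maps to a direct summand up to the relevant localization.)

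The main obstacle I expect is the $w=0$ (zero winding number) case and, relatedly, the failure of the naive Mayer--Vietoris injectivity when the infinite cyclic cover of the gluing torus is disconnected. If the winding number is zero, the infinite cyclic cover of $E_{P(K)}$ contains infinitely many lifts of $E_{\Delta_i}$, permuted by the deck transformation, and the Alexander module of $P(K)$ does not see $\mathcal{A}(P(U))$ over $\Lambda$ but rather over $\Z$ with a shift — in that regime the hypothesis on $\ker(\mathcal{A}(P(U))\to\mathcal{A}(\Delta_i))$ may not transfer, and indeed the statement should presumably be read with the implicit assumption (as in the companion Proposition~\ref{prop:KtoP(K)}, and consistent with the intended application Corollary~\ref{coro:trefoil_pattern} where $P(U)=T_{3,2}\#\overline{T_{3,2}}$ has a nonzero-winding-number companion structure) that the relevant classes have infinite order / $w\ne 0$. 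I would state the result with that hypothesis made explicit, or else verify directly that in the application the winding number is nonzero; modulo that, the proof is a bookkeeping exercise with the satellite formula for Alexander modules and a Mayer--Vietoris diagram chase.
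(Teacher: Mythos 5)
Your overall strategy --- split the Alexander module of the satellite into a pattern piece and a companion piece, check that the inclusion-induced maps respect the splitting, and conclude that distinct kernels on the pattern piece force distinct kernels overall --- is exactly the paper's argument, which simply invokes Litherland's Theorem~1 to get isomorphisms $\mathcal{A}(P(K)) \cong \mathcal{A}(P(U)) \oplus \mathcal{A}(K)[t^w]$ and $\mathcal{A}(P_{\Delta_i}(D)) \cong \mathcal{A}(\Delta_i) \oplus \mathcal{A}(D)[t^w]$ under which the inclusion-induced map extends $\mathcal{A}(P(U)) \to \mathcal{A}(\Delta_i)$. (You are also right that the hypothesis should be read as concerning $\ker(\mathcal{A}(P(U)) \to \mathcal{A}(\Delta_i))$.) So there is no need to reprove the splitting by Mayer--Vietoris; citing it suffices.

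The genuine problem is in your step (1): you place the change of rings $t \mapsto t^w$ on the wrong factor. The substitution applies to the \emph{companion} summands $\mathcal{A}(K)$ and $\mathcal{A}(D)$, because the meridian of $K$ (equivalently, of $D$) maps to $w$ times the meridian of $P(K)$ in first homology; the pattern summands $\mathcal{A}(P(U))$ and $\mathcal{A}(\Delta_i)$ carry \emph{no} substitution, since the meridian of $P(U)$ is identified with the meridian of $P(K)$. This misplacement is precisely what leads you to worry about $w=0$ and to propose adding the hypothesis $w \neq 0$. With the substitution on the correct factor, the summand $\mathcal{A}(P(U))$ and the map $\mathcal{A}(P(U)) \to \mathcal{A}(\Delta_i)$ are unaffected by $w$; only the $i$-independent companion summands degenerate when $w=0$, which is harmless. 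Restricting to $w \neq 0$ would defeat the purpose of the proposition: the nonzero winding number case is already handled by Proposition~\ref{prop:KtoP(K)} and Corollary~\ref{coro:non0winding}, and the intended application, Corollary~\ref{coro:trefoil_pattern}, requires the statement for \emph{arbitrary} patterns with $P(U) = T_{3,2}\#\overline T_{3,2}$.
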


We note that one can write down an Alexander module condition that guarantees that distinct disks for $K$ give distinct disks for $P(K)$, but such a result is subsumed by Proposition~\ref{prop:KtoP(K)}.

\begin{proof}
By \cite[Theorem 1]{Litherland},  there are isomorphisms 
\begin{align*}
\mathcal{A}(P(K)) &\cong \mathcal{A}(P(U)) \oplus \mathcal{A}(K)[t^w]\\
\mathcal{A}(P_{\Delta_i}(D)) &\cong \mathcal{A}(\Delta_i) \oplus \mathcal{A}(D)[t^w]
\end{align*}
such that the inclusion induced map $\mathcal{A}(P(K)) \to \mathcal{A}(P_{\Delta_i}(D))$ extends the inclusion induced map $\mathcal{A}(P(U)) \to \mathcal{A}(\Delta_i)$.
The desired result follows immediately. 
\end{proof}

The subsequent computations of Section~\ref{sec:Alex} will therefore give us the following corollary. 

\begin{corollary}
\label{coro:trefoil_pattern}
	Let $P$ be a pattern with $P(U)= T_{3,2} \#\overline T_{3,2}$.  
	Then for any slice (respectively, ribbon) knot $K$,  the satellite $P(K)$ has infinitely many slice disks (respectively, ribbon disks) that are pairwise nonisotopic rel. boundary modulo local knotting.  
\end{corollary}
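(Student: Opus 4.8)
The plan is to assemble this corollary from the satellite machinery of this section together with the Alexander-module computation promised by Proposition~\ref{coro:alternate}. Write $Q = Q_{3,2} = T_{3,2}\#\overline T_{3,2}$ for the square knot, so that by hypothesis $P(U) = Q$. Proposition~\ref{coro:alternate} supplies an infinite family of ribbon disks $\Delta_k := D_{2/(2k+1)}$ for $Q$, for $k \geq 0$ -- these are ribbon by~\cite[Theorem~1.6]{MeiZup_disks} -- with the key property that the kernels $\ker\bigl(\mathcal{A}(Q)\to\mathcal{A}(\Delta_k)\bigr)$ are pairwise distinct. Thus the substantive content is already in hand; what remains is to push this infinite family through the satellite construction.

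First I would fix a slice (respectively, ribbon) knot $K$ with a chosen slice (respectively, ribbon) disk $D$, and form the satellite slice disks $\Delta_k' := P_{\Delta_k}(D)$ for $P(K)$ as in Definition~\ref{defn:satdisk}. In the ribbon case one checks that each $\Delta_k'$ is again ribbon: puncturing a ribbon disk $D$ produces a ribbon concordance $C_D$, the satellite $P(C_D)$ of a ribbon concordance is ribbon, and the union of a ribbon concordance with a ribbon disk $\Delta_k$ is a ribbon disk -- the same bookkeeping used in the proof of Corollary~\ref{coro:hyp_fib}. Next I would invoke the Proposition immediately preceding this corollary with $\Delta_1 = \Delta_k$ and $\Delta_2 = \Delta_\ell$: since $\ker\bigl(\mathcal{A}(P(U))\to\mathcal{A}(\Delta_k)\bigr)\neq\ker\bigl(\mathcal{A}(P(U))\to\mathcal{A}(\Delta_\ell)\bigr)$ for $k\neq\ell$, the conclusion of that Proposition gives $\ker\bigl(\mathcal{A}(P(K))\to\mathcal{A}(\Delta_k')\bigr)\neq\ker\bigl(\mathcal{A}(P(K))\to\mathcal{A}(\Delta_\ell')\bigr)$.

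Finally, distinctness of these Alexander-module kernels forces the disks $\{\Delta_k'\}_{k\geq 0}$ to be pairwise nonisotopic rel.\ boundary modulo local knotting, by the Alexander-module analogue of Proposition~\ref{prop:kernelob} (cf.~\cite{MilPow_19, Miyazaki86}): connect-summing a slice disk with a $2$--knot induces an isomorphism on Alexander modules compatible with the inclusion-induced map from $\mathcal{A}(P(K))$, so the relevant kernel is unchanged by local knotting, and distinct kernels therefore obstruct LK-isotopy rel.\ boundary. This completes the argument. I expect no genuine obstacle in the assembly, since Proposition~\ref{coro:alternate} and the two satellite Propositions do all the real work; the only point demanding a little care is the ribbon bookkeeping, namely verifying that the satellite construction of Definition~\ref{defn:satdisk} preserves ribbon-ness, which reduces to the two standard facts about ribbon concordances noted above (and is consistent with the treatment in~\cite{GuthEtc}).
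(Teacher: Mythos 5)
Your proposal is correct and matches the paper's (unwritten) intended argument exactly: the paper deduces this corollary directly from the Proposition immediately preceding it (Litherland's splitting of the satellite Alexander module) together with Proposition~\ref{coro:alternate}, just as you do, and your extra care with the ribbon bookkeeping fills in a step the paper leaves implicit. The only cosmetic difference is in the final step, where the paper's route (as in the proof of Proposition~\ref{coro:alternate}) passes from distinct Alexander-module kernels to distinct fundamental-group kernels and then applies Proposition~\ref{prop:kernelob}, whereas you argue directly that the Alexander-module kernel is unchanged by local knotting; both are valid.
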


\section{The Alexander module}\label{sec:Alex}

In this section,  we will show that the collection of ribbon disks $\left\{D_{2/(2k+1)}\right\}_{k\geq 0}$ for the square knot are pairwise LK-nonisotopic rel. boundary, as detected by the kernel of the inclusion-induced map $\mathcal{A}(Q) \to \mathcal{A}(D_{2/(2k+1)})$ on Alexander modules.  (Note that these disks are ribbon by~\cite[Theorem~1.6]{MeiZup_disks}, which relies on work of~\cite{zupanetal}.) 
This gives an alternate proof of the following special case of Theorem~\ref{thm:main_IRB}.

\begin{proposition}
\label{coro:alternate}
	The ribbon disks $\left\{D_{2/(2k+1)}\right\}_{k\geq0}$ bounded by the square knot are pairwise nonisotopic rel. boundary modulo local knotting, as detected by $\ker( \mathcal{A}(Q) \to \mathcal{A}(D_{2/(2k+1)}))$. 
\end{proposition}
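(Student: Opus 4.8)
The plan is to compute, for each $k\ge 0$, the kernel $P_k:=\ker\!\left(\mathcal{A}(Q)\to\mathcal{A}(D_{2/(2k+1)})\right)$ as an explicit submodule of $\mathcal{A}(Q)$ and to show that the $P_k$ are pairwise distinct; the proposition then follows from the Alexander-module counterpart of Proposition~\ref{prop:kernelob}. That counterpart is formal: if $S$ is a $2$--knot, then $\mathcal{A}(D\#S)\cong\mathcal{A}(D)\oplus\mathcal{A}(S)$ compatibly with the inclusion-induced maps from $\mathcal{A}(Q)$, which land in the $\mathcal{A}(D)$ summand, so $\ker(\mathcal{A}(Q)\to\mathcal{A}(D\#S))=\ker(\mathcal{A}(Q)\to\mathcal{A}(D))$; and an isotopy rel.\ boundary from $D\#S$ to $D'\#S'$ induces an isomorphism $\mathcal{A}(D\#S)\xrightarrow{\,\cong\,}\mathcal{A}(D'\#S')$ of modules over $\Lambda:=\Z[t^{\pm1}]$ commuting with the maps from $\mathcal{A}(Q)$. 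Hence $P_k\ne P_{k'}$ forces $D_{2/(2k+1)}$ and $D_{2/(2k'+1)}$ to be non-LK-isotopic rel.\ boundary.

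For the square knot $Q=T_{3,2}\#\overline T_{3,2}$ one has $\mathcal{A}(Q)\cong\Lambda/(\delta)\oplus\Lambda/(\delta)$ with $\delta=t^2-t+1=\Phi_6(t)$, the two summands being $\mathcal{A}(T_{3,2})$ and $\mathcal{A}(\overline T_{3,2})$; setting $R:=\Lambda/(\delta)\cong\Z[\zeta_6]$ gives $\mathcal{A}(Q)\cong R^2$, with Blanchfield form $\langle b\rangle\oplus\langle-b\rangle$ for the (nonsingular) trefoil form $b$. Since $D_{2/(2k+1)}$ is homotopy-ribbon, $\mathcal{A}(Q)\to\mathcal{A}(D_{2/(2k+1)})$ is onto and $P_k$ is a metabolizer for this form, hence a rank-one direct summand $R\!\cdot\! w_k$ generated by a primitive vector $w_k=(\alpha_k,\beta_k)\in R^2$ with $N(\alpha_k)=N(\beta_k)$. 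To pin down $w_k$, I would use a ribbon presentation of $D_{2/(2k+1)}$: either directly from the band/fusion description of these disks in~\cite{MeiZup_disks}, or from the handle decomposition of Section~\ref{sec:gsks}, in which $Z_{c/d}$ is $Y_Q\times I$ with a single $2$--handle attached along $V_{c/d}$, so that $\mathcal{A}(D_{c/d})=\mathcal{A}(Q)/\Lambda\langle[V_{c/d}]\rangle$ and $w_{c/d}=[V_{c/d}]\in H_1(Y_Q;\Lambda)=\mathcal{A}(Q)$ is computed by Fox calculus from a presentation of $\pi_1(Y_Q)$ together with the word representing $V_{c/d}$. Either way, the numerator and denominator of the slope $2/(2k+1)$ should enter the formula so that the class of $w_k$ genuinely varies with $k$; one expects $\beta_k/\alpha_k\in\Q(\zeta_6)$ to be a norm-one element of the form $z_k/\overline{z_k}$ for an explicit $z_k\in R$ depending affinely on $2k+1$.

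The final step is then arithmetic in $R=\Z[\zeta_6]$, a PID with unit group $\{\pm1,\pm\zeta_6,\pm\zeta_6^2\}$: two primitive vectors span the same rank-one summand precisely when they differ by a unit, equivalently when $\beta_k/\alpha_k=\beta_{k'}/\alpha_{k'}$ in $\Q(\zeta_6)$ (the degenerate case $\alpha_k=0$ being handled separately), so it remains only to check that $k\mapsto\beta_k/\alpha_k$ is injective. The main obstacle is the preceding step: extracting the explicit kernel generator $w_k\in R^2$ and confirming its dependence on $k$. The subtlety is that one cannot simply import the word $u\,\omega_{c/d}(u,v)\,u^{-1}\,\omega_{c/d}(u,v)^{-1}$ of Proposition~\ref{prop:slope_relation}, which lives in $\pi_1(Z_0)\cong\pi(T_{3,2})$ and only records data in the rank-one module $\mathcal{A}(T_{3,2})$; the class relevant here lives in $H_1(Y_Q;\Lambda)=\mathcal{A}(Q)\cong R^2$, so one must work with the two-variable module of $Q$, equivalently with a presentation of $\pi(D_{2/(2k+1)})$ as a quotient of $\pi(Q)$. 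Once $w_k$ is correctly identified, the proposition --- and the Alexander-module input it supplies to Corollary~\ref{coro:trefoil_pattern} --- follows by bookkeeping in $\Z[\zeta_6]$.
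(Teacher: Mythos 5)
Your overall frame is the right one and matches the paper's: exhibit the kernels $P_k=\ker(\mathcal{A}(Q)\to\mathcal{A}(D_{2/(2k+1)}))$ explicitly enough to distinguish them, then invoke the Alexander-module shadow of Proposition~\ref{prop:kernelob}. But your proposal has a genuine gap, which you yourself flag as ``the main obstacle'': you never actually produce the kernel elements $w_k$, only list two possible methods (Fox calculus on a presentation of $\pi_1(Y_Q)$, or an unspecified band description) and a guess about the shape of the answer. You correctly observe that the word $u\,\omega_{c/d}\,u^{-1}\,\omega_{c/d}^{-1}$ from Proposition~\ref{prop:slope_relation} cannot be imported because it lives in the one-variable setting, but you do not resolve this. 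The paper fills exactly this hole by a Seifert-surface computation: the link $\Ll_{2/(2k+1)}$ is a derivative link lying on a genus-two Seifert surface $F$ for $Q$, so the positive pushoff $\kappa(\gamma)$ of any component $\gamma$ lifts to an element of $P_k$, and its coordinates in $\mathcal{A}(Q)\cong(\Lambda/(t^2-t+1))^2$ are read off from the intersection numbers of $\gamma$ with a symplectic basis of $H_1(F)$ together with the Seifert matrix. The $k$--dependence is tracked via $\Ll_{2/(2k+1)}=\tau_{\Ll_0}^k(\Ll_{2/1})$, giving the explicit element $((2k+1)t-k,\,(2k+1)t-(k+1))\in P_k$. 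Without some such concrete computation your argument does not close.

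There is also a structural difference worth noting, because it makes the endgame easier than what you set up. You plan to determine each $P_k$ completely as a rank-one summand $R\cdot w_k$ of $R^2$ ($R=\Z[\zeta_6]$) and compare the ratios $\beta_k/\alpha_k$; this requires justifying that $P_k$ is generated by a single primitive vector, which you assert via the metabolizer property but do not prove (and over $R$, a metabolizer need not be a direct summand without further argument). The paper avoids this entirely: it only needs \emph{one} element of each $P_k$, normalized to $w_k=(1,\ast)\in P_k\otimes\Q$, together with the fact that $P_k\otimes\Q$ is self-annihilating under the rational Blanchfield pairing. If $P_{k_1}=P_{k_2}$ then $w_{k_1}-w_{k_2}=(0,ct+d)$ lies in this isotropic submodule, and a direct computation shows $\mathrm{Bl}_Q((0,ct+d),(0,ct+d))=(c^2+cd+d^2)\,\mathrm{Bl}_{T_{3,2}}(1,1)\neq 0$ unless $c=d=0$; monotonicity of $k\mapsto\frac{2k+1}{3k^2+3k+1}$ then forces $k_1=k_2$. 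If you supply the explicit $w_k$ (by the Seifert-surface method or by carrying out your Fox-calculus plan) and either adopt this lighter Blanchfield argument or fully justify that $P_k$ is cyclic over $R$, your proof will be complete.
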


\begin{proof}
	By Proposition~\ref{prop:module}, the kernels of the inclusion-induced maps on the Alexander modules are pairwise nonequal.
	It follows that the kernels of the inclusion-induced maps on the fundamental groups are pairwise nonequal.
	Hence, by \cite[Proposition~6.1]{MilPow_19}, the disks are pairwise LK-nonisotopic rel. boundary.
\end{proof}

We begin by giving a presentation for the Alexander module $\Aa(Q)$.  
Figure~\ref{fig:surface1} illustrates a genus-two Seifert surface $F$ for $Q$,  together with four simple closed curves $\alpha_1, \beta_1, \alpha_2, \beta_2$ representing a basis for $H_1(F)$.

Let $\widehat{\alpha_1}$, $\widehat{\beta_1}$,  $\widehat{\alpha_2}$,  and $\widehat{\beta_2}$ denote simple closed curves in $S^3 \setminus F$ representing a dual basis for $H_1(S^3 \setminus F)$.
(For example, these can be chosen to be positively oriented meridional curves for $\alpha_1$, $\beta_1$, $\alpha_2$, and $\beta_2$.)
Let $\kappa \colon F \to S^3 \setminus F$ be the map obtained by pushing off $F$ in the positive direction.
Then the induced map $\kappa_* \colon H_1(F) \to H_1(S^3 \setminus F)$ is given with respect to our fixed bases by $\kappa_*({v})= {v} \cdot A$,  where 
\[A=[\text{lk}(\kappa(x_i), x_j)]_{1 \leq i,j \leq 4}=
\begin{bmatrix} 1& 0 & \\ -1 & 1 \end{bmatrix} \oplus \begin{bmatrix}& -1 & 0 \\ & 1 & -1 \end{bmatrix}
.\]

The Alexander module $\mathcal{A}(Q)$ is generated by $a_1$, $b_1$, $a_2$, and $b_2$, which are the lifts of $\widehat{\alpha_1}$, $\widehat{\beta_1}$, $\widehat{\alpha_2}$, and $\widehat{\beta_2}$ to a preferred copy of a lift of $S^3 \setminus \nu(F)$ to the infinite cyclic cover $\widetilde{E_Q}$ of $E_Q= S^3\setminus\nu(Q)$.
These generators are subject to the relations encoded by the rows of the matrix  $tA-A^T$.
In particular, we see that 
\[ \mathcal{A}(Q)= \frac{ \Z[t^{\pm1}] }{ ( 1-t+t^2) }\langle a_1\rangle \oplus \frac{ \Z[t^{\pm1}] }{ ( 1-t+t^2) } \langle a_2\rangle \]
where $b_1= (1-t)a_1$,  $b_2=(1-t) a_2$.

Recall from Subsection~\ref{subsec:kernels} that the exterior $Z_{c/d} = B^4\setminus\nu(D_{c/d})$ is constructed from $Y_Q=S^3_0(Q)$ by attaching surface-framed 2--handles to $Y_Q$ along the link $\Ll_{c/d}$ on the closed surface-fiber.
This link can be isotoped to lie on $F\subset\widehat F$, so it represents a derivative link for $Q$.
(Technically, since $\Ll_{c/d}$ contains $pq$ curves, while $g(F) = (p-1)(q-1)$, it would be more precise to say that $\Ll_{c/d}$ \emph{contains} a derivative link.)
Any component $\gamma$ of $\Ll_{c/d}$ has the property that
 $[\widetilde{\kappa(\gamma)}]$ is an element of
$ \ker \left( \mathcal{A}(Q) \xrightarrow{\iota_*} \mathcal{A}(D_{c/d}) \right)$,
where $\widetilde{\kappa(\gamma)}$ denotes the lift of $\kappa(\gamma)$ to our fixed preferred lift of $S^3 \setminus \nu(F)$ in $\widetilde{E_Q}$.

\begin{figure}[h!]
	\begin{subfigure}{.5\textwidth}
		\centering
		\includegraphics[width=.9\linewidth]{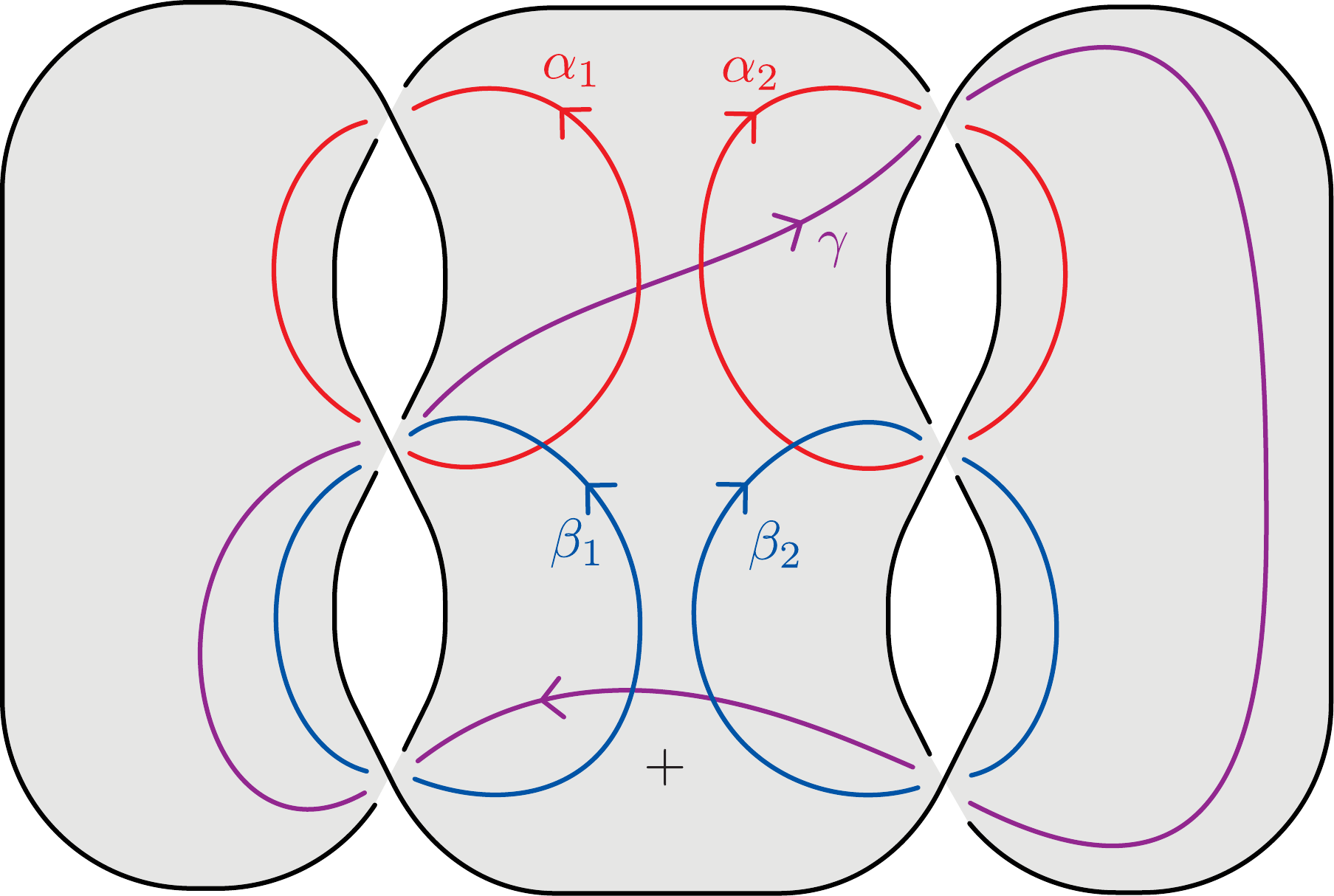}
		\caption{A basis for $H_1(F)$ and $\gamma$}
		\label{fig:surface1}
	\end{subfigure}%
	\begin{subfigure}{.5\textwidth}
		\centering
		\includegraphics[width=.9\linewidth]{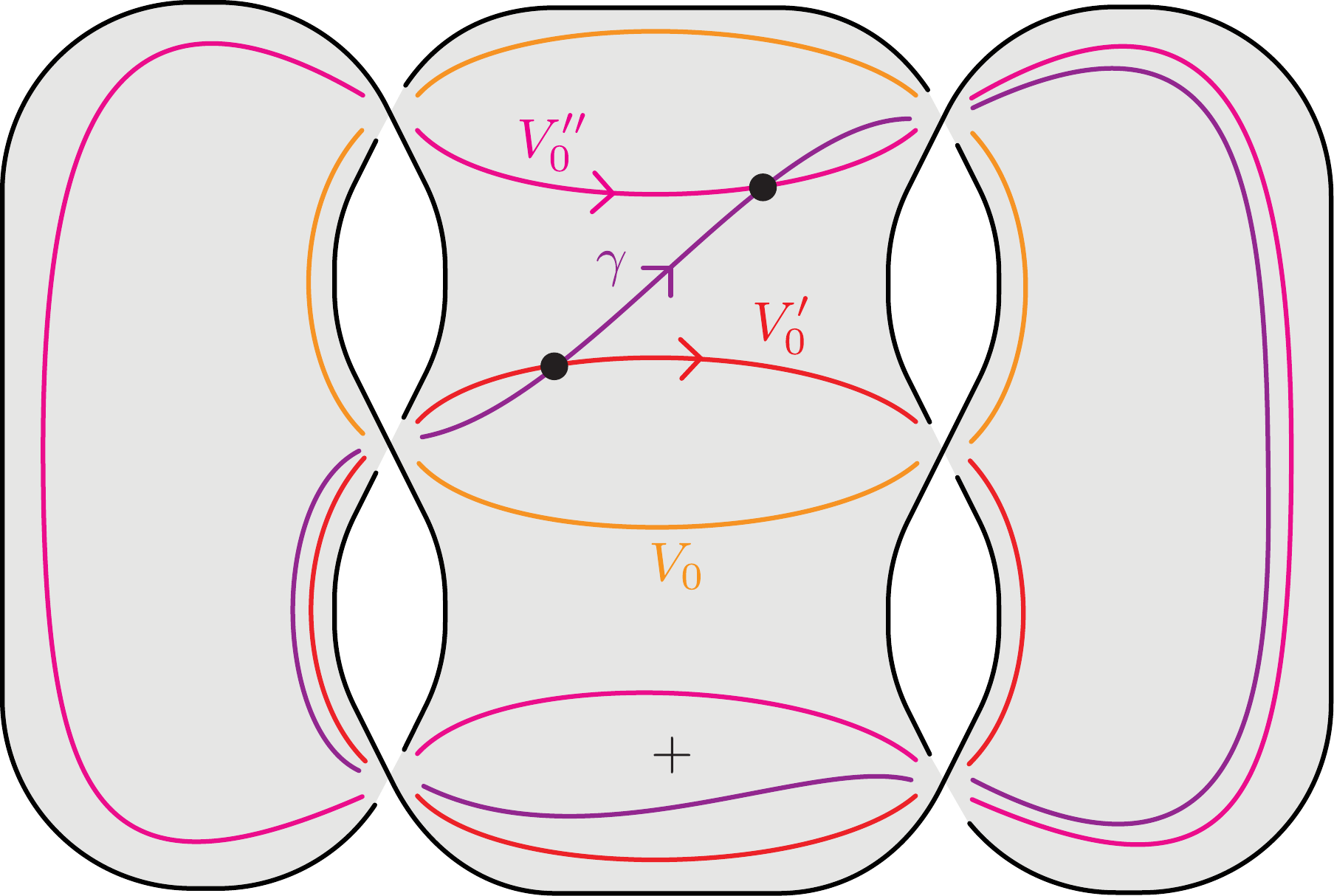}
		\caption{The link $\Ll_0$ and $\gamma$}
		\label{fig:surface2}
	\end{subfigure}%
	\caption{A Seifert surface $F$ for $Q$ with relevant curves.}
	\label{fig:surfaces}
\end{figure}

For an oriented curve $\gamma$ on $F$, we let $x_1=  \gamma \cdot \alpha_1$,  $y_1= \gamma \cdot \beta_1$,  $x_2= \gamma \cdot \alpha_2$,  $y_2= \gamma \cdot \beta_2$, and record these values as
$\begin{bmatrix} \gamma \end{bmatrix}_{\text{int}} = \begin{bmatrix} x_1  & y_1 & x_2 & y_2 \end{bmatrix}.$
The coordinate vector of $\gamma$ as an element of $H_1(F)$ in $(\alpha_1,\beta_1,\alpha_2,\beta_2)$--coordinates is given by 
\[ [\gamma] = \begin{bmatrix} x_1  & y_1 & x_2 & y_2 \end{bmatrix} \cdot \left( \begin{bmatrix} 0 & -1 \\ 1 & 0 \end{bmatrix} \oplus \begin{bmatrix} 0 & 1 \\ -1 & 0 \end{bmatrix} \right)
=\begin{bmatrix} y_1 & -x_1 & -y_2 & x_2 \end{bmatrix}.\]
The coordinate vector of $\widetilde{\kappa(\gamma)}$ as a linear combination of $a_1$, $b_1$, $a_2$, and $b_2$ is exactly the same as the coordinate vector of $\kappa(\gamma)$ as a linear combination of $\widehat{\alpha_1}$, $\widehat{\beta_1}$, $\widehat{\alpha_2}$, and $\widehat{\beta_2}$:
\[ [\widetilde{\kappa(\gamma)}]=
\begin{bmatrix} y_1 & -x_1 & -y_2 & x_2 \end{bmatrix} \cdot A= 
   \begin{bmatrix} x_1+y_1,  - x_1,  x_2+y_2,-x_2 \end{bmatrix}
\]
That is,  
\begin{align*}
[\widetilde{\kappa(\gamma)}]&=(x_1+y_1)a_1-x_1b_1+(x_2+y_2) a_2 - x_2 b_2\\
&= \left( x_1+y_1-x_1(1-t)\right)a_1 + \left( x_2+y_2 - x_2(1-t) \right) a_2 \\
&= (tx_1+y_1,  tx_2+y_2),
\end{align*}
where the last expression uses our previous identification of $ \mathcal{A}(Q)$ as $  \left(\Z[t^{\pm1}] / ( 1-t+t^2) \right)^2$.

\begin{example}
\label{ex:2/1}
	For a first example, let $\gamma$ denote the component $V_{2/1}$ of the derivative link $\mathcal{L}_{2/1}$ shown in purple in Figure~\ref{fig:surface1} (cf.~\cite[Figure~4]{MeiZup_disks}).
	Observe that
	$\gamma_{\text{int}}= \begin{bmatrix} 1& 0 & 1 & -1 \end{bmatrix}.$
	Using the above formulas, we obtain that
	$[\widetilde{\kappa(\gamma)}]= (t,t-1)$ is an element of 
	$P_0:= \ker \left( \mathcal{A}(Q) \xrightarrow{\iota_*} \mathcal{A}(D_{2/1}) \right).$
\end{example}

\begin{example}
	For a second example, we will explicitly determine an element of the kernel
	$$P_k:=\ker \left( \mathcal{A}(Q) \xrightarrow{\iota_*} \mathcal{A}(D_{2/(2k+1)}) \right)$$
	for each $k\geq 0$.
	Consider the link $\Ll_0 = V_0\cup V_0'\cup V_0''$ shown in Figure~\ref{fig:surface2}.
	Let $\tau_{\Ll_0}\colon F\to F$ denote a positive (right-handed) Dehn twist about the components of $\Ll_0$.
	Then, $\Ll_{2/(2k+1)} = \tau_{\Ll_0}(\Ll_{2/(2k-1)}) = \tau_{\Ll_0}^{k}(\Ll_{2/1})$,  as shown in~\cite[Subsection~5.2]{MeiZup_disks}. 

	Let $\gamma$ be the component of $\Ll_{2/1}$ analyzed in Example~\ref{ex:2/1} and replicated in Figure~\ref{fig:surface2}.
	Notice that $\gamma$ does not intersect $V_0$ and intersects each of $V_0'$ and $V_0''$ in a single point.
	It follows that 
	$$\begin{bmatrix} \tau_{\Ll_0}(\gamma) \end{bmatrix}_{\text{int}}= \begin{bmatrix}\gamma\end{bmatrix}_{\text{int}}+ \begin{bmatrix}V_0'\end{bmatrix}_{\text{int}}+ \begin{bmatrix}V_0''\end{bmatrix}_{\text{int}}.$$
	In fact,  since Dehn twisting $\gamma$ along $\mathcal{L}_{0}$ preserves the property of intersecting $\mathcal{L}_{0}$ in two points,  one in $V_0'$ and one in $V_0''$, if we let $\gamma_k = \tau_{\Ll_0}^k(\gamma)$,  we have that 
	$$\begin{bmatrix}\gamma_k\end{bmatrix}_{\text{int}} = \begin{bmatrix}\gamma\end{bmatrix}_{\text{int}}+ k\begin{bmatrix}V_0'\end{bmatrix}_{\text{int}}+k \begin{bmatrix}V_0''\end{bmatrix}_{\text{int}} = \begin{bmatrix} 2k+1 &- k & 2k+1 & -k-1 \end{bmatrix}.$$

	Using the formula preceding Example~\ref{ex:2/1}, we have that

\begin{eqnarray*}
		[\widetilde{\kappa(\gamma_k)}]&= \left((2k+1)t-k, (2k+1)t-(k+1) \right)
		\in P_k,
\end{eqnarray*} using our $(a_1, a_2)$--coordinates on $\Aa(Q)$.

We note for later that since $P_k$ is a submodule of $\mathcal{A}(Q)$,   we have
\[ w_k := \frac{-(2k+1)t+(k+1)}{3k^2+3k+1}[\widetilde{\kappa(\gamma_k)}]= \left( 1,  \frac{2k+1}{3k^2+3k+1} t+ \frac{3k^2+2k}{3k^2+3k+1}\right) \in P_{k} \otimes \Q.\]
\end{example}

We are now ready to prove the following.

\begin{proposition}
\label{prop:module}
	For $k_1,k_2 \geq 0$,  we have $P_{k_1} = P_{k_2}$ if and only if $k_1=k_2$.
\end{proposition}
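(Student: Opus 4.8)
The plan is to reduce the equality of submodules $P_{k_1}=P_{k_2}\subseteq\mathcal{A}(Q)$ to an elementary inequality of rational functions of $k$, by passing to coefficients in a quadratic field. First I would record the algebraic setup. Since $1-t+t^2=\Phi_6(t)$ is irreducible over $\Q$, the ring $R:=\Z[t^{\pm1}]/(1-t+t^2)$ is the ring of Eisenstein integers $\Z[\zeta_6]$, and $K:=R\otimes_\Z\Q\cong\Q(\zeta_6)$ is a field with $\Q$-basis $\{1,t\}$. Consequently $\mathcal{A}(Q)\otimes_\Z\Q\cong K^2$ is a two-dimensional $K$-vector space, and each $P_k\otimes_\Z\Q$ is a $K$-subspace of it; the only field structure I will actually use is that any nonzero element of a one-dimensional $K$-subspace spans it.

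Next I would show that $P_k\otimes\Q$ is exactly one-dimensional over $K$. It is nonzero because it contains the nonzero element $[\widetilde{\kappa(\gamma_k)}]=\bigl((2k+1)t-k,\,(2k+1)t-(k+1)\bigr)$, and it is proper because $\mathcal{A}(Q)/P_k\cong\mathcal{A}(D_{2/(2k+1)})$ is infinite: the disk exterior $Z_{2/(2k+1)}$ is diffeomorphic to the exterior $Z_0$ of the product ribbon disk by \cite[Theorem~1.6]{MeiZup_Qpq}, and $Z_0=(S^3\setminus\nu(T_{3,2}))\times I$ has $\mathcal{A}(D_0)\cong\mathcal{A}(T_{3,2})\cong\Z[\zeta_6]$. (Alternatively, one may invoke the standard fact that for a slice disk $D$ of a knot the kernel of $\mathcal{A}(K)\to\mathcal{A}(D)$ is a metabolizer of the rational Blanchfield pairing and hence has half the $\Q$-dimension of $\mathcal{A}(K)\otimes\Q$.) Since $\dim_K K^2=2$, this forces $\dim_K(P_k\otimes\Q)=1$.

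With this in hand, the element $w_k=\bigl(1,\tfrac{2k+1}{3k^2+3k+1}\,t+\tfrac{3k^2+2k}{3k^2+3k+1}\bigr)\in P_k\otimes\Q$ exhibited before the statement is nonzero and therefore spans $P_k\otimes\Q$. So if $P_{k_1}=P_{k_2}$, then $w_{k_2}=c\,w_{k_1}$ for some $c\in K^\times$; comparing first coordinates gives $c=1$, hence $w_{k_1}=w_{k_2}$. Equating the $t$-coefficients of the second coordinates with respect to the basis $\{1,t\}$ of $K$ then yields
\[
\frac{2k_1+1}{3k_1^2+3k_1+1}=\frac{2k_2+1}{3k_2^2+3k_2+1}.
\]
It remains to check that $g(k):=\tfrac{2k+1}{3k^2+3k+1}$ is injective on $\{0,1,2,\dots\}$. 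Clearing denominators, the numerator of $g(k)-g(k+1)$ is $(2k+1)(3k^2+9k+7)-(2k+3)(3k^2+3k+1)=6k^2+12k+4>0$ while both denominators are positive, so $g$ is strictly decreasing on the nonnegative integers; hence $k_1=k_2$. The converse implication is trivial.

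The only genuinely non-formal point is the dimension count in the second paragraph — knowing that $P_k\otimes\Q$ is a \emph{line} rather than all of $K^2$ — because it is precisely this that lets the single explicit element $w_k$ determine the entire rationalized submodule. Everything else is bookkeeping in $\Q(\zeta_6)$ together with the one-line monotonicity computation.
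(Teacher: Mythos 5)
Your argument is correct, and it reaches the conclusion by a genuinely different route than the paper. The paper works entirely with the rational Blanchfield pairing: it uses only that $P_k\otimes\Q$ is isotropic, computes explicitly that an element of the form $(0,ct+d)$ has nonvanishing self-pairing unless $c=d=0$, and applies this to the difference $w_{k_1}-w_{k_2}\in P_{k_1}\otimes\Q$. You instead make the linear algebra do the work: you observe that $\mathcal{A}(Q)\otimes\Q$ is a $2$--dimensional vector space over the field $K=\Q[t^{\pm1}]/(1-t+t^2)\cong\Q(\zeta_6)$, establish that $P_k\otimes\Q$ is a line (either via the diffeomorphism $Z_{2/(2k+1)}\cong Z_0$ so that the quotient is $\mathcal{A}(T_{3,2})\otimes\Q\cong K$, or via the standard half-rank metabolizer fact), and then note that the normalized spanning vector $w_k$ of a line is unique. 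Both proofs ultimately reduce to the same identity $\frac{2k_1+1}{3k_1^2+3k_1+1}=\frac{2k_2+1}{3k_2^2+3k_2+1}$ and the same monotonicity computation, and both rest on nondegeneracy of the Blanchfield form in some guise — the paper verifies it by hand on the $T_{3,2}$ summand, while you import it through the half-dimensionality of the metabolizer (or sidestep it entirely via the diffeomorphism of disk exteriors, which is perhaps the cleanest justification since the paper already cites that all the $Z_{c/d}$ are diffeomorphic). Your approach buys a more structural explanation of \emph{why} the single element $w_k$ determines $P_k\otimes\Q$, at the cost of needing surjectivity of $\mathcal{A}(Q)\to\mathcal{A}(D_{2/(2k+1)})$ (which does hold here because the disks are homotopy-ribbon, though you should say so); the paper's computation is longer but never needs to know the dimension of the kernel.
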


\begin{proof}
Properties of the (rational) Blanchfield pairing imply that for any $c,d\in\Q$, we have:
\begin{align*}
\text{Bl}_Q((0,ct+d), (0,ct+d))&= \text{Bl}_{\overline T_{3,2}}(0,0) + \text{Bl}_{T_{3,2}}(ct+d,ct+d)\\
&= (ct+d)(ct^{-1}+d)\text{Bl}_{T_{3,2}}(1,1)\\
&= (cd(t-1+t^{-1})+cd+ c^2+d^2) \text{Bl}_{T_{3,2}}(1,1)\\
&=(c^2+ cd +d^2) \text{Bl}_{T_{3,2}}(1,1)\\
&= \left((c+d/2)^2+3d^2/4\right)\text{Bl}_{T_{3,2}}(1,1).
\end{align*}
Here the first equality comes from the fact that $\text{Bl}_Q= \text{Bl}_{\overline T_{3,2}} \oplus \text{Bl}_{T_{3,2}}$; 
the second  from the sesquilinearity of the Blanchfield pairing; 
the third and fifth  from algebraic manipulation; and 
the fourth from the fact that $\text{Bl}_{T_{3,2}}$ takes values in $\Q(t)/ \Q[t^{\pm1}]$ of the form $\frac{*}{t^2-t+1}$.
Since $\mathcal{A}({T_{3,2}})$ is cyclic,  $\text{Bl}_{T_{3,2}}(1,1) \neq 0 \in \Q(t)/\Q[t^{\pm1}]$.
(If it did vanish,  the entire Blanchfield pairing would be completely zero,  which would contradict its nonsingularity.)  
It follows that $\text{Bl}_Q((0,ct+d),(0,ct+d))=0$ if and only if $c=d=0$.   

Now suppose that $P_{k_1} = P_{k_2}$,  and so	
\[w_{k_1}- w_{k_2}= \left(0,\left( \frac{2k_1+1}{3k_1^2+3k_1+1}-\frac{2k_2+1}{3k_2^2+3k_2+1}\right) t + \left(\frac{3k_1^2+2k_1}{3k_1^2+3k_1+1} -\frac{3k_2^2+2k_2}{3k_2^2+3k_2+1}\right) \right)
\]
	is an element of $P_{k_1} \otimes \Q$.  Since all elements of $P_k \otimes \Q$ have vanishing Blanchfield pairing,  by the   previous paragraph we have $\frac{2k_1+1}{3k_1^2+3k_1+1}-\frac{2k_2+1}{3k_2^2+3k_2+1}=0$.  Since $\frac{2k+1}{3k^2+3k+1}$ is strictly decreasing as a function of $k$ when $k \geq 0$,  we have as desired that $k_1=k_2$. 
\end{proof} 

\begin{remark}
\label{rmk:upgrade_alex}
	It is not too difficult to understand the effect on $\Aa(Q)$ of the maps induced by the symmetries $\alpha$, $\beta$, and $\tau$ that generate $\textsc{Sym}(S^3,Q)$,   as  featured in the proof of Theorem~\ref{thm:main_I}.
	(For example, the map induced by $\tau$ is multiplication by $t^{\pm 1}$ in the second coordinate.)
	One can then show that for any element $ f \in \textsc{Sym}(S^3,Q)$,  we have that $f_*(P_{k_1}) \neq P_{k_2}$, giving an Alexander module-theoretic  proof that the disks $D_{2/(2k+1)}$ are LK-nonisotopic. 
\end{remark}

\bibliographystyle{amsalpha}
\bibliography{LK-isotopy.bib}

\end{document}